\newcommand\mathens[1]{\mathbb{#1}} 
\newcommand{\ud}{\mathrm{d}}
\newcommand{\N}{\mathens{N}}
\newcommand{\Z}{\mathens{Z}}
\newcommand{\R}{\mathens{R}}
\newcommand{\C}{\mathens{C}}
\newcommand{\T}{\mathens{T}}
\newcommand{\CP}{\C\textup{P}}
\newcommand\sphere[1]{\mathens{S}^{#1}}
\newcommand{\la}{\left\langle}
\newcommand{\ra}{\right\rangle}
\DeclareMathOperator{\dist}{dist}
\DeclareMathOperator{\supp}{supp}
\newcommand{\ham}{\textup{Ham}}
\newcommand\Symp{\textup{Sp}}
\newcommand{\id}{\textup{id}}
\DeclareMathOperator\im{im}
\DeclareMathOperator\ind{ind}
\DeclareMathOperator\mind{\overline{ind}}
\DeclareMathOperator\mas{mas}
\DeclareMathOperator\mmas{\overline{mas}}
\newcommand\comp{\bullet}
\newtheorem{thm}{Theorem}[section]
\newtheorem{lem}[thm]{Lemma}
\newtheorem{cor}[thm]{Corollary}
\newtheorem{prop}[thm]{Proposition}
\newtheorem{prop-def}[thm]{Definition-proposition}
\theoremstyle{definition}
\theoremstyle{remark}
\newcommand\action{\mathcal{T}}
\newcommand\lochom{\textup{C}}
\newcommand\HH{H^*(\{\action\leq 1+\varepsilon'\},
\{\action\leq -\varepsilon'\})}
\begin{document}

\title[Periodic points in $\CP^d$ via generating functions]
{On Periodic points of Hamiltonian diffeomorphisms of
$\CP^d$ via generating functions}

\author[S. Allais]{Simon Allais}
\address{Simon Allais,
\'Ecole Normale Sup\'erieure de Lyon,
UMPA\newline\indent  46 all\'ee d'Italie,
69364 Lyon Cedex 07, France}
\email{simon.allais@ens-lyon.fr}
\urladdr{http://perso.ens-lyon.fr/simon.allais/}
\date{April 5, 2020}
\subjclass[2010]{70H12, 37J10, 58E05}
\keywords{Generating functions, Hamiltonian, hyperbolic point, periodic point, pseudo-rotations}

\begin{abstract}
    Inspired by the techniques of Givental and Théret,
    we provide a proof with generating functions 
    of a recent result of Ginzburg-Gürel concerning the
    periodic points of Hamiltonian diffeomorphisms of $\CP^d$.
    For instance, we are able to prove that fixed points of pseudo-rotations
    are isolated as invariant sets or that a Hamiltonian
    diffeomorphism with a hyperbolic fixed point has infinitely
    many periodic points.
\end{abstract}
\maketitle

\section{Introduction}

Let $\CP^d$ be the complex $2d$-dimensional space endowed with
its classical symplectic structure $\omega$, that is $\pi^*\omega=i^*\Omega$ where
$\pi : \sphere{2d+1}\to\CP^d$ is the quotient map,
$i:\sphere{2d+1}\hookrightarrow\C^{d+1}$ is the inclusion map
and $\Omega := \sum_j \ud q_j \wedge\ud p_j$ is the canonical
symplectic form of $\C^{d+1}\simeq \R^{2(d+1)}$.
We are interesting in the study of Hamiltonian diffeomorphisms
of $\CP^d$, which are time-one maps of those vector fields $X_t$
satisfying the Hamilton equations $X_t \lrcorner \omega = \ud h_t$
for some smooth maps $h:[0,1]\times\CP^d\to\R$ called
Hamiltonian maps.
Let $\ham(\CP^d)$ be the set of Hamiltonian diffeomorphisms
of $\CP^d$.
In 1985, Fortune-Weinstein \cite{For85} proved that the Arnol'd conjecture holds
in $\CP^d$: any Hamiltonian diffeomorphism $\varphi\in\ham(\CP^d)$
has at least $d+1$ fixed points.
Inspired by the work of Givental \cite{Giv90},
Théret \cite{The98} used generating functions to reprove Fortune-Weinstein's
theorem.
Given $\varphi\in\ham(\CP^d)$, a $k$-periodic point
$x\in\CP^d$ of $\varphi$ is by definition a fixed point
of the $k$-iterated map $\varphi^k$.
Contrary to aspherical symplectic manifolds like the $2d$-dimensional torus
$\T^{2d}$ endowed with the canonical symplectic form,
the Conley conjecture does not hold in $\CP^d$:
there exists Hamiltonian diffeomorphisms with only finitely many
periodic points.
For instance, one can take a rotation $\rho$ of $\CP^d$ defined by
\begin{equation*}
    \rho([z_1:z_2:\cdots:z_{d+1}]) :=
    \left[e^{2i\pi a_1}z_1 : e^{2i\pi a_2}z_2:\cdots:
    e^{2i\pi a_{d+1}}z_{d+1}\right],
\end{equation*}
with rationally independent coefficients $a_1,\dotsc,a_{d+1}\in\R$.
This is indeed a Hamiltonian diffeomorphism
whose only periodic points are its fixed points:
the projection of the canonical base of $\C^{d+1}$.
Notice that this Hamiltonian diffeomorphism has the minimal number of
periodic points.
A Hamiltonian diffeomorphism of $\CP^d$ which has exactly $d+1$ periodic
points is called a \emph{pseudo-rotation} of $\CP^d$.

In the case $d=1$, $\CP^1\simeq\sphere{2}$ and Hamiltonian diffeomorphisms
are the area preserving diffeomorphisms.
Franks \cite{Fra92,Fra96} proved that every area preserving homeomorphism
has either $2$ or infinitely many periodic points.
Therefore, the only Hamiltonian diffeomorphisms of $\CP^1$
with finitely many periodic points are pseudo-rotations.
In 1994, Hofer-Zehnder \cite[p.263]{HZ94} conjectured a higher-dimensional 
generalization of this result:
every Hamiltonian diffeomorphism of $\CP^d$ has either $d+1$
or infinitely many periodic points
(it was stated for more general symplectic manifolds).
In this direction,
a symplectic proof of Franks result (in the smooth setting) 
was provided by Collier \emph{et al.} \cite{CKRTZ}.
In 2019, Shelukhin \cite{She19} proved a version of Hofer-Zehnder conjecture:
if a Hamiltonian diffeomorphism on a closed monotone symplectic manifold
with semisimple quantum homology (\emph{e.g.} $\CP^d$)
has a finite number of contractible periodic points then the sum of
ranks of the local Floer homologies at its contractible fixed points
is equal to the total dimension of the homology of the manifold
(that is $d+1$ for $\CP^d$).

A compact invariant set $K\subset\CP^d$ of a homeomorphism
$\varphi$ is said to be isolated
if there exists a neighborhood $U$ of $K$ such that, for all $p\in U\setminus K$,
$\varphi^k(p)\not\in U$ for some $k\in\Z$.
A fixed point of a Hamiltonian diffeomorphism is said to be
homologically visible if its local Floer homology is non-trivial.
The purpose of this article is to provide an elementary proof
of the following theorem of Ginzburg-Gürel \cite{GG18}:
\begin{thm}\label{thm:main}
    Every Hamiltonian diffeomorphism of $\CP^d$
    which has a fixed point all of whose iterations
    are homologically visible that is isolated as an invariant
    set has infinitely many periodic points.
\end{thm}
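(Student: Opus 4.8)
The plan is to assume that $\varphi$ has only finitely many periodic points and to derive a contradiction, running Ginzburg--Gürel's argument inside Théret's generating-function formalism.

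\emph{Reduction and setup.} Replacing $\varphi$ by a fixed iterate $\varphi^{N}$, we may assume that every periodic point is a fixed point, say $y_{1}=\bar x, y_{2},\dots,y_{m}$. This preserves the hypotheses: the set of periodic points is unchanged; every iterate of $\bar x$ under $\varphi^{N}$ is among the iterates of $\bar x$ under $\varphi$, hence still homologically visible; and ``isolated as an invariant set'' passes to iterates, since if $U$ isolates $\bar x$ for $\varphi$ then any $V\ni\bar x$ small enough that $\varphi^{j}(\overline{V})\subset U$ for $|j|<N$ isolates it for $\varphi^{N}$. For each $k\geq 1$ one attaches to $\varphi^{k}$ its Givental--Théret generating function, producing on the appropriate finite-dimensional bundle the function $\action$ (after the $U(1)$-reduction and the shift that makes $\HH$ meaningful), whose critical points correspond to the fixed points of $\varphi^{k}$ with critical values the $k$-periodic actions, and such that $\HH$ is a free rank-one module over $H^{*}(\CP^{d})$ via the $\comp$-action, hence of total dimension $d+1$. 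Furthermore, the local cohomology $\lochom^{*}$ at the critical set over $y_{i}$ is, up to the shift by $\ind$, the local Floer homology of $\varphi^{k}$ at $y_{i}$, so $y_{i}$ is homologically visible for $\varphi^{k}$ precisely when $\lochom^{*}(y_{i})\neq 0$. The hypothesis that $\bar x$ is isolated as an invariant set enters twice: it makes $\bar x$ an isolated fixed point of each $\varphi^{k}$, so that all these groups are defined, and it controls $\lochom^{*}(\bar x)$ under iteration — its support stays in a window of bounded width around $k\,\mmas(\bar x)$, and it cannot be entirely cancelled inside the Morse complex of $\action$ — which is the translation of Ginzburg--Gürel's persistence properties for the local Floer homology of isolated invariant sets.

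\emph{The counting constraint.} Since $\varphi^{k}$ has finitely many fixed points, the Morse theory of $\action$, together with the $\comp$-module structure, pins down how the groups $\lochom^{*}(y_{i})$ assemble into $\HH\cong H^{*}(\CP^{d})$; in particular one gets the generating-function counterpart of Shelukhin's equality, $\sum_{i}\dim\lochom^{*}(y_{i})=d+1$, so that for every $k$ the local classes realise exactly the even degrees $0,2,\dots,2d$ modulo $2(d+1)$ and the relation $w^{d}\neq 0$, and the $d+1$ essential classes sit at critical levels $k\mathcal{A}(y_{i})$ lying in an action window of length at most $1$. Because $\bar x$ is homologically visible for all its iterates, it contributes a nonzero — and, by the non-cancellation above, essential — summand $\lochom^{*}(\bar x)$ to this picture for every single $k$.

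\emph{The contradiction.} Now let $k$ range over $\N$: the critical levels $k\mathcal{A}(y_{i})$ modulo $1$ and the degrees $k\,\mmas(y_{i})$ modulo $2(d+1)$ move with $k$, while the previous constraint must persist — for every $k$ the $d+1$ essential classes are carried by the fixed finite set $\{y_{1},\dots,y_{m}\}$, occupy exactly the even degrees $0,\dots,2d$, and fit into a length-one action window. A Dirichlet/equidistribution argument on the finitely many real numbers $\mathcal{A}(y_{i})$ and $\mmas(y_{i})$ then forces a strong non-resonance pattern on them — in effect $\varphi$ is a pseudo-rotation whose fixed points have ``rationally independent'' mean indices and actions in the relevant sense. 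But a fixed point all of whose iterates are homologically visible and that is isolated as an invariant set is exactly what such a pattern excludes: the rigidity of $\lochom^{*}(\bar x)$ (bounded-width support near $k\,\mmas(\bar x)$, never cancelled) means that either $\mmas(\bar x)$ creates a forbidden resonance — the hyperbolic case $\mmas(\bar x)=0$ being the prototype — or $\bar x$ alone would have to realise the whole rank-$(d+1)$ module at one critical level for arbitrarily large $k$, which its thin local cohomology cannot do. Either way we contradict the assumption of finitely many periodic points.

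\emph{Main obstacle.} The generating-function package for the iterates, the rank-one module structure of $\HH$, and the identification of $\lochom^{*}$ with local Floer homology are supplied by the work of Givental and Théret and by the earlier sections. The crux is the last two paragraphs: one must redo Ginzburg--Gürel's local analysis with generating functions — establishing for $\lochom^{*}(\bar x)$ the bounded-width support around $k\,\mmas(\bar x)$, the impossibility of total cancellation, and the counting equality — and then combine these with the equidistribution input to close the resonance argument. That is where the real work lies.
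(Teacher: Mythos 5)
Your reduction and setup are in line with the paper, and you correctly identify the two central ingredients (the crossing-energy estimate and the persistence of the local class of $\bar x$ through a window of length $1$), but the argument you sketch to close the proof departs from the paper in a way that leaves a genuine gap. First, the ``counting constraint'' $\sum_i\dim\lochom^*(y_i)=d+1$ is Shelukhin's theorem; it is quoted in the introduction for context, but the paper neither proves it nor uses it, and a generating-function proof of it is not in the paper. Building the contradiction on it therefore imports an unproved, deep input. The paper instead works entirely with the single fixed point $\bar x$: Corollary~\ref{cor:persistence} produces a subgroup $G\subset H^*(M^{\leq 1},M^{<0})$ whose restriction is $\lochom^*(z_0)$ and which survives on a flow-out neighborhood $U$ exactly when $z_0\in U$, and Proposition~\ref{prop:globallochom} then shows that $u^{d+1}G$ reaches $\lochom^*(z_1)$, the recapped copy of $\bar x$ at action $1$.

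Second, the concluding ``resonance/equidistribution'' argument is too loose to stand as a proof. Saying that the rigidity of $\lochom^*(\bar x)$ ``excludes'' a non-resonance pattern, or that $\bar x$ ``cannot realise the whole rank-$(d+1)$ module,'' does not yet contradict anything. The paper's actual mechanism is a Lyusternik--Schnirelmann min-max inside the flow-out $U:=U(z_1)$: from the persistent class $v'$ one forms the min-max values
\begin{equation*}
  \tau_k := \inf\left\{ t\geq 0 \ \middle|\ u^{k}v'\not\in\ker\left(H^*(U^{\leq 1},U^{<0})\to H^*(U^{\leq t},U^{<0})\right)\right\},\quad 0\leq k\leq d+1,
\end{equation*}
with $\tau_0=0$ (Lemma~\ref{lem:persistence}) and $\tau_{d+1}=1$ (exact sequence of the triple). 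Since $\action$ has finitely many critical values and $d\geq 1$, there is a genuine intermediate critical value $\tau_1$; the Dirichlet choice of $m$ and Corollary~\ref{cor:supp} put all critical values in $[0,c_\infty)\cup(1-c_\infty,1)$, and the Crossing Energy Theorem (Theorem~\ref{thm:aux}) applied to the flow-out forces both $\tau_1<c_\infty$ and $\tau_1\geq c_\infty$. Your proposal gestures at this structure (Dirichlet, bounded support window, non-cancellation) but does not set up the flow-out neighborhoods, the persistence lemma, or the min-max chain, and replaces the decisive two-sided estimate on $\tau_1$ by a resonance heuristic. That is precisely where, as you yourself note, ``the real work lies,'' so the proposal remains a plan rather than a proof.
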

As Ginzburg-Gürel already pointed out, Theorem~\ref{thm:main} has
two important corollaries.
If $x$ is a hyperbolic point then
it is always isolated as an invariant set and
the local cohomology of its iterations has rank 1.
\begin{cor}\label{cor:hyperbolic}
    Every Hamiltonian diffeomorphism of $\CP^d$ with a hyperbolic
    fixed point has infinitely many periodic points.
\end{cor}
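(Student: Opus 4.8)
The plan is to obtain Corollary~\ref{cor:hyperbolic} from Theorem~\ref{thm:main} by checking, for a hyperbolic fixed point $x$ of a Hamiltonian diffeomorphism $\varphi\in\ham(\CP^d)$, the two hypotheses of that theorem: that $\{x\}$ is isolated as an invariant set, and that every iteration of $x$ is homologically visible. Both are standard facts about hyperbolic dynamics, so the entire content of the corollary lies in Theorem~\ref{thm:main}.

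For the first hypothesis, since $D_x\varphi$ has no eigenvalue on the unit circle the local stable and unstable manifold theorem furnishes a neighborhood $U\ni x$ together with submanifolds $W^{s}_{\mathrm{loc}}(x)=\{p\in U:\varphi^{n}(p)\in U\text{ for all }n\geq 0\}$ and $W^{u}_{\mathrm{loc}}(x)=\{p\in U:\varphi^{n}(p)\in U\text{ for all }n\leq 0\}$, tangent at $x$ to complementary subspaces; after shrinking $U$ one has $W^{s}_{\mathrm{loc}}(x)\cap W^{u}_{\mathrm{loc}}(x)=\{x\}$, hence $\{p\in U:\varphi^{n}(p)\in U\text{ for all }n\in\Z\}=\{x\}$, which is exactly the isolation condition of the introduction with $K=\{x\}$. (Equivalently, one may invoke the Hartman--Grobman theorem to conjugate $\varphi$ near $x$ to the linear hyperbolic map $D_x\varphi$, for which the claim is immediate.)

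For the second hypothesis, I would note that hyperbolicity is inherited by iterates: the eigenvalues of $D_x\varphi^{k}=(D_x\varphi)^{k}$ are the $k$-th powers of those of $D_x\varphi$, so they still avoid the unit circle and in particular $\det\bigl(D_x\varphi^{k}-\id\bigr)\neq 0$. Thus $x$ is a non-degenerate fixed point of $\varphi^{k}$ for every $k\geq 1$, and the local cohomology of a non-degenerate fixed point has rank one, concentrated in the single degree fixed by its Conley--Zehnder index; in particular it is non-trivial, so every iteration of $x$ is homologically visible. Both hypotheses of Theorem~\ref{thm:main} being satisfied, $\varphi$ has infinitely many periodic points. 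I do not expect a real obstacle here: the deduction is formal, and the only points needing care are conventional---that the local cohomology group appearing in the generating-function proof of Theorem~\ref{thm:main} is indeed the invariant whose non-vanishing is forced by non-degeneracy, and that the rank-one computation at a hyperbolic point holds with the coefficient ring and index normalization adopted in the paper.
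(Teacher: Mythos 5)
Your proof is correct and follows essentially the same route as the paper: invoke Theorem~\ref{thm:main} after observing that hyperbolicity of $x$ gives isolation as an invariant set (standard from stable/unstable manifolds or Hartman--Grobman) and that every iterate $\varphi^k$ is non-degenerate at $x$, whence via (\ref{eq:kerhess}) the Hessian $\ud^2\widehat{F}_{t(x^k)}$ is non-degenerate and $\lochom^*(x,t(x^k))$ has rank one. The paper's proof is slightly terser in that it takes the isolation for granted (it is mentioned in the introduction as an elementary fact) and phrases the non-degeneracy argument directly in terms of the generating function, but the content is the same.
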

In fact, this theorem of Ginzburg-Gürel was originally
proven in \cite{GG14} in a more general setting,
including some complex Grassmannians, $\CP^d\times P^{2k}$
where $P$ is symplectically aspherical and $k\leq d$,
monotone products $\CP^d\times\CP^d$.
We mention that the case of $\CP^d\times\T^{2k}$, when $k\leq d$,
can be deduced as well from our techniques.

In the special case of pseudo-rotations,
every fixed point arises from a min-max principle
and thus has a non-trivial local cohomology.
\begin{cor}\label{cor:pseudorotation}
    Each fixed point of a pseudo-rotation of $\CP^d$
    is not isolated as an invariant set. 
\end{cor}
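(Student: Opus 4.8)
The plan is to deduce Corollary~\ref{cor:pseudorotation} from Theorem~\ref{thm:main} by contradiction. Let $\varphi\in\ham(\CP^d)$ be a pseudo-rotation and let $x\in\CP^d$ be one of its fixed points; suppose for contradiction that $x$ is isolated as an invariant set. By definition $\varphi$ has exactly $d+1$ periodic points, while by Fortune--Weinstein's theorem \cite{For85} it has at least $d+1$ fixed points; hence its set of periodic points equals $\operatorname{Fix}(\varphi)$, which has exactly $d+1$ elements. To apply Theorem~\ref{thm:main} to $\varphi$ at the point $x$, it remains to check that every iteration of $x$ is homologically visible: once this is granted, the theorem yields infinitely many periodic points, contradicting that $\varphi$ is a pseudo-rotation, and the corollary follows.

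First I would note that every iterate $\varphi^k$, $k\geq 1$, is again a pseudo-rotation, with $\operatorname{Fix}(\varphi^k)=\operatorname{Fix}(\varphi)$. Indeed, if $(\varphi^k)^m(y)=y$ then $\varphi^{km}(y)=y$, so each periodic point of $\varphi^k$ is a periodic point of $\varphi$, hence an element of $\operatorname{Fix}(\varphi)$; conversely each point of $\operatorname{Fix}(\varphi)$ is fixed by $\varphi^k$. Thus $\varphi^k$ has exactly $d+1$ periodic points, all of them fixed, and in particular $x\in\operatorname{Fix}(\varphi^k)$ for every $k$. It therefore suffices to prove the following general fact: every fixed point of a pseudo-rotation of $\CP^d$ is homologically visible.

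For this I would appeal to the generating-function description of $\ham(\CP^d)$ developed in the body of the paper (in the spirit of Givental \cite{Giv90} and Th\'eret \cite{The98}). To a pseudo-rotation $\psi$ it associates a generating function $F$ whose critical points correspond to the points of $\operatorname{Fix}(\psi)$, together with a Lusternik--Schnirelmann family of $d+1$ min-max critical values $c_1\leq c_2\leq\cdots\leq c_{d+1}$ built from the ring $H^*(\CP^d;\R)=\R[u]/(u^{d+1})$, with $c_j$ detecting the class $u^{j-1}$. Since $\psi$ is a pseudo-rotation, $F$ has only finitely many critical points (equivalently critical orbits), so each critical level is finite; the classical Lusternik--Schnirelmann lemma then forces the min-max values to be pairwise distinct, $c_1<c_2<\cdots<c_{d+1}$. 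As these are $d+1$ distinct critical values and there are exactly $d+1$ critical points, each critical point is the unique critical point lying on the level of some $c_j$. Being a min-max level, the relative cohomology $H^*(\{F\leq c_j+\varepsilon\},\{F\leq c_j-\varepsilon\})$ is non-trivial for $\varepsilon>0$ small, and by excision this group is the local cohomology at that critical point. Hence every fixed point of $\psi$ carries non-trivial local cohomology and --- through the identification of this local cohomology with local Floer homology established in the body --- is homologically visible.

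Applying the previous paragraph to $\psi=\varphi^k$ for all $k\geq 1$ shows that every iteration of $x$ is homologically visible; Theorem~\ref{thm:main} then applies to $\varphi$ and produces infinitely many periodic points, the contradiction we were after. The step I expect to be the most delicate --- and which really relies on the machinery set up earlier rather than on formal manipulation --- is this last one: carrying out the min-max construction with generating functions on $\CP^d$, running the Lusternik--Schnirelmann argument to force the values $c_j$ to be pairwise distinct for a pseudo-rotation, and identifying the relative cohomology of the sublevel sets at a critical level with the local Floer homology appearing in the hypothesis of Theorem~\ref{thm:main}.
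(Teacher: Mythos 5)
Your overall route is the same as the paper's: reduce Corollary~\ref{cor:pseudorotation} to showing that every iteration of a fixed point of a pseudo-rotation has non-trivial local cohomology, get that from a Lusternik--Schnirelmann min-max count, and then invoke Theorem~\ref{thm:main}. Your observation that any iterate $\varphi^k$ of a pseudo-rotation is again a pseudo-rotation with the same fixed set is a fine way of packaging what the paper states as ``every iteration of a pseudo-rotation has only $d+1$ fixed points.''

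One step is mis-stated and worth correcting, because it hides the actual content. You build the LS family ``from the ring $H^*(\CP^d;\R)=\R[u]/(u^{d+1})$, with $c_j$ detecting $u^{j-1}$.'' But the variational principle of Section~\ref{se:action} lives on $M\subset I\times\CP^N$ for a large $N$ determined by the generating family, and the min-max classes are consecutive powers $u^k,u^{k+1},\dotsc,u^{k+d}\in H^*(\CP^N)$ (with integer coefficients, the convention of the paper) pulled back to the sublevel sets of $\action$, for a suitable offset $k$. The fact that one obtains precisely $d+1$ consecutive classes with min-max values in each action window $[t,t+1)$ is not a consequence of $\dim H^*(\CP^d)=d+1$; it follows from the shift identity $\ell(t+1)=\ell(t)+d+1$, which combines Corollary~\ref{cor:homlength} on projective joins with Lemma~\ref{lem:cdhomstab}. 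This is exactly where the recapping/join structure enters, and it is the non-formal part of the argument that your sketch delegates to ``the ring $H^*(\CP^d)$.'' Once the $d+1$ distinct LS values in $[0,1)$ are in hand, your conclusion is the same as the paper's: for a map with isolated fixed points these values are pairwise distinct, so at least $d+1$ fixed points have non-zero $\lochom^*$ in that window; for a pseudo-rotation there are exactly $d+1$, so all of them do. Finally, a small caveat of presentation: you invoke the identification of $\lochom^*$ with local Floer homology, but Section~\ref{se:discussion} explicitly declines to prove this; Theorem~\ref{thm:main} as established in Section~\ref{se:proof} really uses non-vanishing of $\lochom^*$ directly, so the identification is not needed for the corollary to follow from the theorem as proven.
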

The original proof of Theorem~\ref{thm:main} 
involves a non-trivial estimate on the energy of 
Floer trajectories leaving a periodic orbit
called Crossing energy theorem by Ginzburg-Gürel
\cite[Theorem~3.1]{GG14} \cite[Theorem~6.1]{GG18}
and proved with a Gromov compactness like theorem
on $J$-holomorphic curves.
The second ingredient of the original proof is quantum homology,
which is defined by means of Gromov-Witten invariants.
Although we closely follow the original argument,
our proof employs only elementary machinery:
Morse theory and classical algebraic topology.
Our main tool is generating functions, which are
finite dimensional versions of the action functional
for Hamiltonian diffeomorphisms of $\R^{2d}$.
Inspired by Théret \cite{The98}, we build a
smooth map $\action : M\to \R$ 
defined on a finite dimensional manifold
$M\subset \R\times\CP^N$.
There is
a correspondence between critical points of $\action$
and capped fixed points of $\varphi$.
With this map, the Crossing energy theorem essentially
boils down to elementary analysis.
If $\zeta\in M$ is a critical point of $\action$
associated to a capped fixed point $\bar{z}$, then
$\lochom^*(\bar{z})$ is by definition the local cohomology of $\zeta$
with integral coefficients.
In this setting, the $\textup{q}$ operator of quantum homology is
mimicked by multiplication by $u^{d+1}\in H^*(M)$
where $u$ is the generator of $H^2(\CP^N)$
(notice that we have a morphism $H^*(\CP^N)\to H^*(M)$, since $M\subset \R\times\CP^N$)
so that we can write the following identity
when every object can be defined:
\begin{equation*}
    \lochom^*(\bar{z}\# A) = u^{d+1}\lochom^*(\bar{z}),
\end{equation*}
where $A$ is the generator of $\pi_2(\C P^d)\simeq\Z$ satisfying $\la[\omega],A\ra=-\pi$
(see Proposition~\ref{prop:globallochom} for the precise statement
and Section~\ref{se:discussion} for a further discussion about it).

Incidentally, we give a new composition formula for generating functions
which is analogous to Chaperon's one \cite{Cha84} but works for
the $\C$-linear identification of the diagonal
$(z,Z)\mapsto (\frac{z+Z}{2},i(z-Z))$.
We also give an alternative way to study the projective join of
a subspace of $\CP^N$ with a projective subspace that
does not involve equivariant cohomology.

\subsection*{Organization of the paper}
In Section~\ref{se:introgf}, we provide the background on generating functions.
In Section~\ref{se:mas}, we provide the background on the Maslov index.
In Section~\ref{se:projectivejoin}, we study the cohomological properties
of the projective join needed to study the action of recapping on the cohomology
of the sublevel sets of $\action$.
In Section~\ref{se:gf}, we show how to use generating functions to provide
a finite dimensional analogue of Floer cohomology in $\CP^d$.
In Section~\ref{se:proof}, we prove Theorem~\ref{thm:main} and
Corollaries~\ref{cor:hyperbolic} and \ref{cor:pseudorotation},
postponing the proof of the Crossing energy theorem.
In Section~\ref{se:crossing}, we prove the Crossing energy theorem in our setting.

\subsection*{Acknowledgments}
I am very grateful to my advisor Marco Mazzucchelli, who
introduced me to this problem and discussed it
with me.

\section{Generating functions} \label{se:introgf}

A generating function for Lagrangian submanifold of $T^*\C^n$ is a smooth function
$F:\C^n\times\C^k\to\R$ such that $0$ is a regular value of the $\C^k$-fiber derivative
$\frac{\partial F}{\partial\xi}$.
The space
\begin{equation}\label{eq:critlocus}
    \Sigma_F := \left\{ (q;\xi)\in \C^n\times\C^k \ |\ \frac{\partial F}{\partial \xi}
    (q;\xi) = 0 \right\}
\end{equation}
is a smooth submanifold with dimension $2n$.
Let $\iota_F : \Sigma_F \to T^*\C^n$ denotes the map
$\iota_F(q;\xi) := (q,\partial_q F(q;\xi))$.
Then $\iota_F$ is a Lagrangian immersion and we say that $F$ generates the
immersed Lagrangian
submanifold $L:=\iota_F(\Sigma_F)$.

A conical generating function of $\C^{2n}\simeq T^*\C^n$
is a $C^1$ map $F:\C^n\times\C^k\to\R$
such that
\begin{enumerate}[ 1.]
    \item $F$ is $S^1$-invariant and 2-homogeneous, that is
        \begin{equation*}
            F(\lambda \zeta) = |\lambda|^2 F(\zeta),\quad
            \forall \lambda\in\C,
            \forall \zeta\in\C^n\times\C^k,
        \end{equation*}
    \item $F$ is smooth in the neighborhood of
        $\Sigma_F \setminus 0$ where the subset $\Sigma_F\subset \C^n\times\C^k$
        is still defined by (\ref{eq:critlocus})
    \item $0$ is a regular value of the fiber derivative $\partial_\xi F$ on
        $\C^n\times\C^k\setminus 0$.
\end{enumerate}
The set $\Sigma_F$ is $\C$-invariant
and so is $\widetilde{L}:=\iota_F(\Sigma_F)$.
If $\pi:\C^{2n}\setminus 0\to \CP^{2n-1}$ denotes the quotient map,
then $L:=\pi(\widetilde{L})$ is a smooth immersed Lagrangian of $\CP^{2n-1}$.
We will say that $\widetilde{L}$ is a conical immersed Lagrangian.

A quadratic generating function $Q:\C^n\times\C^N\to\R$ is
a generating function which is also a quadratic form.
In this case, the induced Lagrangian $\iota_Q(\Sigma_Q)$ is
a linear Lagrangian subspace of $T^*\C^n$.
Notice that if $F:\C^n\times\C^k\to\R$ is a generating function
of the Lagrangian $L\subset \C^{2n}$,
then the quadratic form $\ud^2 F(x):\C^n\times\C^k\to\R$,
for $x\in\Sigma_F$, is a quadratic generating function
of the tangent space $T_{\iota_F(x)} L \subset \C^{2n}$.
The same is true if $F$ is conical and $x\in\Sigma_F\setminus 0$.
Moreover, $\C x \subset \ker \ud^2 F(x)$ in this case.

The existence of generating functions is well known for
Lagrangians which are isotopic to the 0-section $\C^n\times\{ 0\}$
with a ``suitably controlled'' behavior at infinity
(\emph{e.g.} for a compactly supported isotopy
or for a linear isotopy).
In fact, we usually find a generating family of a whole isotopy
$(L_t):=(\Phi_t(\C^n\times\{ 0\}))$, where $(\Phi_t)$ is
a Hamiltonian flow,
that is a continuous family $(F_t)$ of generating functions
with $F_t$ generating $L_t$ for all $t\in[0,1]$.
In Section~\ref{se:gf}, we give a construction of
generating families for Hamiltonian flows.

There are strong uniqueness results relative to generating functions
of linear Lagrangians
or Lagrangians isotopic to the 0-section through compactly supported
isotopies.
Concerning quadratic generating functions, we will only need the following elementary result
\begin{lem}[{\cite[Prop.~35]{ThePHD}}]
    \label{lem:quad0}
    For every quadratic generating function
    $Q:\C^n\times\C^k\to\R$ 
    of the $0$-section, there exists a linear fibered isomorphism
    $A$ of $\C^n\times\C^k$ which is isotopic to the identity through
    linear fiberwise isomorphism such that
    $Q\circ A(q;\xi)$ does not depend on $q\in\C^n$.
    More precisely, if $Q(z)=\la \widetilde{Q}z,z\ra$ with
    \begin{equation*}
        \widetilde{Q}=\begin{bmatrix} a & b\\ {}^t b & c \end{bmatrix},
    \end{equation*}
    then $c$ is invertible and
    $A(q;\xi) := (q;\xi - c^{-1}{}^t bq)$ so that
    $Q\circ A(q;\xi) = {}^t\xi c \xi$.
\end{lem}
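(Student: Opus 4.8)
Throughout, view $\C^n\times\C^k$ as a real vector space and $\widetilde Q$ as a real symmetric matrix, so that the transposes in the statement make sense; write $z=(q;\xi)$ and $Q(q;\xi)={}^tq\,a\,q+2\,{}^tq\,b\,\xi+{}^t\xi\,c\,\xi$. Then $\partial_\xi Q(q;\xi)=2({}^tb\,q+c\,\xi)$ and $\partial_q Q(q;\xi)=2(a\,q+b\,\xi)$. The generating–function hypothesis says the linear map $(q,\xi)\mapsto {}^tb\,q+c\,\xi$ is onto, equivalently $\Sigma_Q=\ker[{}^tb\;\;c]$ is a linear subspace of real dimension $2n$. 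The hypothesis that $Q$ generates the $0$-section says $\iota_Q(\Sigma_Q)=\C^n\times\{0\}$, which unpacks into two facts: (i) for every $q\in\C^n$ there is a $\xi$ with $(q;\xi)\in\Sigma_Q$, and (ii) $\partial_q Q$ vanishes identically on $\Sigma_Q$, i.e.\ $a\,q+b\,\xi=0$ whenever ${}^tb\,q+c\,\xi=0$. The plan is to extract $c$ invertible and $a=b\,c^{-1}\,{}^tb$ from (i)--(ii), then finish with the substitution defining $A$.

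First I would show $c$ is invertible. Composing $\iota_Q$ with the projection to the base, the linear map $\Sigma_Q\to\C^n$, $(q;\xi)\mapsto q$, is onto by (i); since its source has real dimension $2n=\dim_{\R}\C^n$, it is an isomorphism. Hence $\Sigma_Q$ is the graph of a linear map $\C^n\to\C^k$, so it meets the fibre $\{0\}\times\C^k$ only at the origin. But $\Sigma_Q\cap(\{0\}\times\C^k)=\{0\}\times\ker c$, so $\ker c=0$ and $c$ is invertible; moreover $\Sigma_Q=\{(q;-c^{-1}\,{}^tb\,q):q\in\C^n\}$. Now define $A(q;\xi):=(q;\xi-c^{-1}\,{}^tb\,q)$: this is an $\R$-linear automorphism of $\C^n\times\C^k$ that is the identity on the base and a $q$-dependent translation on each fibre, with inverse $(q;\eta)\mapsto(q;\eta+c^{-1}\,{}^tb\,q)$, and the path $A_s(q;\xi):=(q;\xi-s\,c^{-1}\,{}^tb\,q)$, $s\in[0,1]$, connects $A_0=\id$ to $A_1=A$ within this class. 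Feeding (ii) in once more: for each $q$ the unique point of $\Sigma_Q$ over $q$ is $(q;-c^{-1}\,{}^tb\,q)$, and $\partial_q Q$ vanishes there, so $a\,q-b\,c^{-1}\,{}^tb\,q=0$ for all $q$; as $a$ and $b\,c^{-1}\,{}^tb$ are symmetric, $a=b\,c^{-1}\,{}^tb$.

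Finally a direct substitution in $Q\circ A(q;\xi)=Q(q;\xi-c^{-1}\,{}^tb\,q)$ shows the cross term $2\,{}^tq\,b\,\xi$ of $Q$ is exactly cancelled by the one produced by ${}^t\xi\,c\,\xi$ after the substitution (using ${}^tc=c$ and $c\,c^{-1}={}\mathrm{id}$), leaving $Q\circ A(q;\xi)={}^tq\,(a-b\,c^{-1}\,{}^tb)\,q+{}^t\xi\,c\,\xi={}^t\xi\,c\,\xi$ by the previous paragraph. None of the computations are delicate; the only point that needs care — and what I would view as the crux rather than an obstacle — is correctly translating ``$Q$ generates the $0$-section'' into facts (i) and (ii), the graph property (i) being what forces $c$ invertible and (ii) being what forces $a=b\,c^{-1}\,{}^tb$. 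After that the change of variables is forced and the remaining verification is a one-line expansion.
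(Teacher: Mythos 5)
Your proof is correct and is essentially the argument the lemma's explicit formula is pointing to: the graph property of $\Sigma_Q$ over the base forces $\ker c=0$ via $\Sigma_Q\cap(\{0\}\times\C^k)=\{0\}\times\ker c$, the vanishing of $\partial_q Q$ on $\Sigma_Q$ forces $a=bc^{-1}\,{}^tb$, and then $Q\circ A(q;\xi)={}^t\xi c\xi$ is a direct expansion. (The remark that $a$ and $bc^{-1}\,{}^tb$ are symmetric is superfluous --- a linear map vanishing identically is the zero matrix --- but harmless; the paper itself cites Th\'eret's thesis for the proof rather than supplying one.)
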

Concerning the conical case, we will use the following
\begin{lem}[{\cite[Lemma~4.8]{The98}}]
    \label{lem:isotopy}
    If $(F_t : \C^n\times\C^k\to\R)$ is a smooth family of conical
    generating functions for the constant conical Lagrangian
    $L\subset T^*\C^n$, then there is a smooth isotopy
    $(B_t)$ of conical fibered diffeomorphisms such that
    \begin{equation*}
        F_t\circ B_t = F_0,\quad \forall t.
    \end{equation*}
\end{lem}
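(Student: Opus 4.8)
The plan is to prove Lemma~\ref{lem:isotopy} by Moser's deformation argument. I would look for $(B_t)$ as the flow of a time-dependent vector field tangent to the fibers of $\C^n\times\C^k\to\C^n$: $\partial_t B_t = X_t\circ B_t$, $B_0=\id$, with $X_t(q;\xi)=(0;Y_t(q;\xi))$ for some $Y_t:\C^n\times\C^k\to\C^k$. Then
\begin{equation}\label{eq:mosereq}
    \frac{\ud}{\ud t}\big(F_t\circ B_t\big) = \big(\partial_t F_t + \ud F_t(X_t)\big)\circ B_t ,
\end{equation}
so, since $F_0\circ B_0=F_0$, it suffices to find for each $t$ a map $Y_t$, $\C$-homogeneous of degree one, solving $\partial_\xi F_t\cdot Y_t = -\,\partial_t F_t$ on $\C^n\times\C^k\setminus 0$; the resulting flow is then conical and fibered, and \eqref{eq:mosereq} forces $F_t\circ B_t\equiv F_0$.

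The heart of the matter is that $\partial_t F_t$ vanishes on $\Sigma_{F_t}\setminus 0$. Since each $F_t$ is $2$-homogeneous, Euler's identity gives $\ud F_t(\zeta)[\zeta]=2F_t(\zeta)$; evaluated at $\zeta=(q;\xi)\in\Sigma_{F_t}$, where $\partial_\xi F_t=0$, this reads $F_t(q;\xi)=\tfrac12\,\partial_q F_t(q;\xi)\cdot q$. Hence $F_t|_{\Sigma_{F_t}}=\beta\circ\iota_{F_t}$, where $\beta(q,p):=\tfrac12\,p\cdot q$ is a \emph{fixed} function on $T^*\C^n$, independent of $t$. Now fix $\zeta_0\in\Sigma_{F_{t_0}}$. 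Since $\iota_{F_t}$ is a Lagrangian immersion, hence a local diffeomorphism onto the $t$-independent Lagrangian $L$, there is a smooth family $t\mapsto\iota_{F_t}^{-1}$ of local inverses near $\iota_{F_{t_0}}(\zeta_0)$, and $\zeta(t):=\iota_{F_t}^{-1}\big(\iota_{F_{t_0}}(\zeta_0)\big)$ is a path in $\Sigma_{F_t}$ (through $\zeta_0$ at $t_0$) along which $\iota_{F_t}(\zeta(t))$ — in particular the $\C^n$-coordinate of $\zeta(t)$ — is constant, so $\dot\zeta(t)\in\{0\}\times\C^k$. Differentiating $F_t(\zeta(t))=\beta\big(\iota_{F_{t_0}}(\zeta_0)\big)\equiv\mathrm{const}$ and using $\partial_\xi F_t(\zeta(t))=0$ gives $\partial_t F_{t_0}(\zeta_0)=0$, and $\zeta_0,t_0$ were arbitrary.

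With this in hand I would solve $\partial_\xi F_t\cdot Y_t=-\,\partial_t F_t$ as follows. Near $\Sigma_{F_t}\setminus 0$, where $0$ is a regular value of $\partial_\xi F_t$, the fiber derivative is a submersion, so Hadamard's division lemma — applied to $-\,\partial_t F_t$, which vanishes there by the previous step, and run with smooth dependence on $t$ and, in homogeneous coordinates, $\C$-equivariantly — produces a smooth family of degree-one homogeneous solutions $Y_t$ near $\Sigma_{F_t}$; away from $\Sigma_{F_t}$ the covector $\partial_\xi F_t$ is onto, and one patches with a conical partition of unity to obtain $Y_t$ on all of $\C^n\times\C^k\setminus 0$. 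Then $X_t=(0;Y_t)$ is $\C$-equivariant of degree one, so $|X_t(\zeta)|\le C|\zeta|$, and by Gr\"onwall its flow $(B_t)$ is complete on $\C^n\times\C^k\setminus 0$, preserves the fibers over $\C^n$, commutes with the $\C$-action, extends continuously by $B_t(0)=0$ to the required conical fibered diffeomorphisms, and satisfies $F_t\circ B_t\equiv F_0$ by \eqref{eq:mosereq}.

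The main obstacle is the second paragraph: one needs $\partial_t F_t$ to be divisible by $\partial_\xi F_t$, i.e.\ flat enough along $\Sigma_{F_t}$, and this is precisely where $2$-homogeneity is indispensable — it forces $F_t|_{\Sigma_{F_t}}$ to equal the canonical primitive $\tfrac12\,p\cdot q$, whereas for a general family of generating functions this primitive is only defined up to a locally constant function and $\partial_t F_t$ need not vanish on $\Sigma_{F_t}$. Once that is established, the parametrized division lemma and the completeness of a degree-one homogeneous flow are routine, as is the bookkeeping that keeps the construction conical and fibered.
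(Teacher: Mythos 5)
The paper does not prove this lemma; it is quoted from Th\'eret \cite[Lemma~4.8]{The98}, and Th\'eret's argument is precisely the Moser-type deformation you outline. Your proposal is correct and identifies the one genuinely nontrivial point: in the conical setting one cannot merely say that $\partial_t F_t$ is locally constant on $\Sigma_{F_t}$ (as in the compactly supported uniqueness theory, where one picks up an additive constant), but must show it actually vanishes there, and your derivation via Euler's identity $F_t|_{\Sigma_{F_t}} = \tfrac12\,\partial_q F_t\cdot q = \beta\circ\iota_{F_t}$, together with the fixed-target path $\zeta(t) = \iota_{F_t}^{-1}(\iota_{F_{t_0}}(\zeta_0))$ whose $q$-component is frozen, does exactly that. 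The remaining steps (parametrized Hadamard division near $\Sigma_{F_t}$, the explicit formula $Y_t = -\partial_t F_t\,\nabla_\xi F_t/|\nabla_\xi F_t|^2$ away from it, a $\C^*$-invariant partition of unity and $S^1$-averaging to keep $Y_t$ equivariant, and linear growth $|X_t(\zeta)|\le C|\zeta|$ for completeness) are standard and correctly flagged as such. The only caveat worth noting is regularity: as defined in Section~2 a conical generating function is only $C^1$ away from a neighborhood of $\Sigma_F\setminus 0$, so off that neighborhood your explicit $Y_t$ is merely continuous and Picard--Lindel\"of does not directly apply; in the actual constructions of Section~5 the $F_t$'s are globally smooth, so this is harmless, but if one insists on the weaker regularity in the statement, one should either assume joint smoothness or argue (as Th\'eret in effect does) that it suffices to carry out the Moser argument on a conical neighborhood of $\bigcup_t \Sigma_{F_t}\setminus 0$, where smoothness holds, and extend $B_t$ by Lemma~\ref{lem:quad0}-type fiberwise linear maps elsewhere.
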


Let $\ham(\C^d)$ be the set of Hamiltonian diffeomorphisms of
$\C^d\simeq T^*\R^d$.
The map
\begin{equation}
    \tau : \overline{\C^d}\times\C^d \to \C^{2d},\quad
    \tau(z,Z) = \left(\frac{z+Z}{2},i(z-Z)\right),
\end{equation}
is a $\C$-linear symplectomorphism sending the diagonal
$\{ (z,z)\ |\ z\in\C^d\}$ to the 0-section of $\C^{2d}$.
Let $\Phi\in\ham(\C^d)$,
the image of the graph $z\mapsto (z,\Phi(z))$ of $\Phi$ under
$\tau$ is then a Lagrangian submanifold $L_\Phi\subset \C^{2d}$.
A generating function of the Hamiltonian diffeomorphism $\Phi$ is
a generating function of $L_\Phi$.
A generating family of a Hamiltonian flow $(\Phi_t)$
is a generating family of $(L_{\Phi_t})$.
Let $\Phi$ be a conical Hamiltonian diffeomorphism of $\C^d$,
that is, a homeomorphism $\Phi:\C^d\to\C^d$ 
with $\Phi|_{\C^d\setminus 0} \in\ham(\C^d\setminus 0)$
and which is
$\C$-equivariant:
\begin{equation*}
    \Phi(\lambda z) = \lambda \Phi(z),\quad
    \forall \lambda\in\C,\forall z\in\C^n.
\end{equation*}
To simplify notation, we will write $\Phi\in\ham_\C (\C^d)$ and say
that $\Phi$ is a $\C$-equivariant Hamiltonian diffeomorphism.
The last definition extends to Hamiltonian flow in the obvious way.
Then the induced subset $L_\Phi\subset \C^{2d}$ is a conical Lagrangian.
A conical generating function of $\Phi$
(or simply a generating function of $\Phi$)
is a conical generating function of $L_\Phi$.
It extends to conical flow in the obvious way.
As a consequence of the general case,
if $F$ is a generating function of $\Phi\in\ham(\C^d)$
and $(z;\xi)$ is a critical point of $F$ then
$z$ is a fixed point of $\Phi$ and
$\ud^2 F(z;\xi)$ is a quadratic generating function of
$\ud\Phi(z)$.
Moreover, 
\begin{equation*}
    \dim\ker\ud^2 F(z;\xi) = \dim\ker(\ud\Phi(z)-\id).
\end{equation*}

\section{Maslov Index} \label{se:mas}

\subsection{Maslov index of a path in $\Symp(2d)$}

Let $\Gamma = (\Gamma_t) : [0,1]\to\Symp(2d)$ be a continuous path in
the space of symplectic matrices $\Symp(2d)$ of $\R^{2d}\simeq\C^d$.
Then there exists a continuous family $(Q_t)$ of quadratic generating functions
such that, for $t\in[0,1]$,
$Q_t : \C^N \to \R$ is generating $\Gamma_t$.
The variation of index $\ind(Q_1)-\ind(Q_0)\in\Z$ is independent of the choice
of $(Q_t)$ and is called the Maslov index of $\Gamma$ denoted
\begin{equation*}
    \mas((\Gamma_t)) :=
    \ind(Q_1) - \ind(Q_0)\in\Z.
\end{equation*}
Other equivalent definitions of the Maslov index
(which is sometimes also called Conley-Zehnder index)
are available in the literature, see \cite{SZ92},
\cite{Lon02} and references therein.

In order to state the general properties of $\mas$,
following Théret, in this section we will denote
by $R\comp S$ the concatenation of two paths
$R=(R_t)$ and $S=(S_t)$ in $\Symp(2d)$ satisfying $R_1=S_0$,
that is $(R\comp S)_t = R_{2t}$ for $t\in[0,1/2]$
and $(R\comp S)_t = S_{2t-1}$ for $t\in[1/2,1]$.
The path $RS$ stands for the pointwise matrix product of two
paths in $\Symp(2d)$ that is $(RS)_t = R_t S_t$ for all $t$.
Given a path $R=(R_t)$ in $\Symp(2d)$,
the path $R^{(-1)}$ will stand for the reverse path
$(R_{1-t})$, whereas $R^{-1}$ will stand for the path of inverses
$(R^{-1}_t)$.
Identifying matrices with their canonical linear maps,
for two square matrices $A$ and $B$,
$A\oplus B$ will stand for the square matrix
\begin{equation*}\begin{bmatrix} A&0\\ 0&B \end{bmatrix}\end{equation*}
and given two paths $R=(R_t)$ and $S=(S_t)$ in
$\Symp(2n)$ and $\Symp(2m)$ respectively,
$(R\oplus S)_t := (R_t\oplus S_t)$ as a path in $\Symp(2(n+m))$.
We recall the basic proprieties of the Maslov index
(see for instance \cite[Prop.~39 and 60]{ThePHD}).

\begin{prop}
    \label{prop:maspath}
    Let $R$ be a path in $\Symp(2n)$,
    \begin{enumerate}[(1)]
        \item\label{it:comp}
            if $S$ is a path in $\Symp(2n)$ with
            $S_0 = R_1$, then
            $\mas(R\comp S) = \mas(R) +\mas(S)$,
        \item\label{it:rev}
            the Maslov index of the reverse path is
            $\mas(R^{(-1)}) = -\mas(R)$,
        \item\label{it:sum}
            if $S$ is a path in $\Symp(2m)$, then
            $\mas(R\oplus S) = \mas(R) + \mas(S)$,
        \item\label{it:symp}
            if $A\in\Symp(2d)$,
            then
            $\mas(ARA^{-1}) = \mas(R)$.
        \item\label{it:homot}
            if $S$ is a path homotopic to $R$
            relative to endpoints,
            that is there exists a continuous family
            $s\mapsto R^s$ of paths in $\Symp(2n)$
            with $R^0 = R$ and $R^1 = S$
            such that $R^s_0 \equiv R^0_0$
            and $R^s_1 \equiv R^0_1$, then
            $\mas(S) = \mas(R)$,
        \item\label{it:rot}
            if $S_t := 
            \begin{bmatrix} \cos(2\pi t)& -\sin(2\pi t)\\
            \sin(2\pi t) & \cos(2\pi t) \end{bmatrix}\in\Symp(2)$,
            $t\in[0,1]$,
            then
            $\mas(S) = -2$.
    \end{enumerate}
\end{prop}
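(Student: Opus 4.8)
The plan is to reduce each item to the definition $\mas(R)=\ind(Q_1)-\ind(Q_0)$ for a continuous family $(Q_t)$ of quadratic generating functions with $Q_t$ generating $R_t$, together with the well-definedness (independence of the choice of $(Q_t)$), which I will take as already established. For item (\ref{it:comp}): pick a generating family $(Q_t)$ along $R$ and a generating family $(Q'_t)$ along $S$; after applying Lemma~\ref{lem:quad0} I may arrange the endpoints to match so that $Q'_0$ and $Q_1$ differ only by a fibered isomorphism isotopic to the identity (hence have the same index), and then the concatenated family computes $\ind(Q'_1)-\ind(Q_0)=(\ind(Q'_1)-\ind(Q'_0))+(\ind(Q_1)-\ind(Q_0))=\mas(S)+\mas(R)$. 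Item (\ref{it:rev}) is immediate: the reversed family $(Q_{1-t})$ generates $R^{(-1)}$, so $\mas(R^{(-1)})=\ind(Q_0)-\ind(Q_1)=-\mas(R)$.

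For item (\ref{it:sum}): if $Q_t$ generates $R_t$ and $Q'_t$ generates $S_t$, then $Q_t\oplus Q'_t$ (a quadratic form on the direct sum of the fiber spaces) generates $R_t\oplus S_t$, and $\ind(Q_t\oplus Q'_t)=\ind(Q_t)+\ind(Q'_t)$, so the variations add. For item (\ref{it:symp}): if $Q_t$ generates $R_t$, I claim $Q_t$ also generates $AR_tA^{-1}$ up to a linear change of the base variable $q$ coming from $A$; precisely, conjugating the Lagrangian $L_{R_t}\subset T^*\C^n$ by the symplectomorphism of $T^*\C^n$ induced by $A$ (a fibered linear symplectomorphism over a linear map of the base) pulls a generating function back to a generating function with the same fiber and hence the same index. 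Since $A$ is fixed (independent of $t$), this change is $t$-independent, so $\ind$ is shifted by a constant and the difference $\mas$ is unchanged. Item (\ref{it:homot}): given a homotopy $R^s$ rel endpoints, choose a generating family $(Q^s_t)$ depending continuously on $(s,t)$; for each fixed $s$ the quantity $\ind(Q^s_1)-\ind(Q^s_0)$ is an integer depending continuously on $s$, hence constant, giving $\mas(S)=\mas(R)$. (Alternatively one invokes directly that $\mas$ depends only on the homotopy class rel endpoints, which is part of well-definedness.)

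Item (\ref{it:rot}) is the only genuine computation: for $S_t=\begin{bmatrix}\cos 2\pi t&-\sin 2\pi t\\ \sin 2\pi t&\cos 2\pi t\end{bmatrix}$ on $\C^1=\R^2$ I must exhibit an explicit continuous family of quadratic generating functions $Q_t$ generating $S_t$ and compute $\ind(Q_1)-\ind(Q_0)$. Using the identification $\tau$ sending the graph of $S_t$ to a linear Lagrangian of $\C^2\simeq T^*\C^1$, one writes down the generating function (possibly needing one auxiliary fiber variable to desingularize at the values of $t$ where $S_t-\id$ is not invertible, i.e.\ $t=0,1$), and tracks how its signature changes as $t$ runs over $[0,1]$; the full rotation contributes $-2$. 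I expect this explicit normalization — choosing the right $Q_t$ and handling the two degenerate times where the naive generating function blows up — to be the main technical obstacle, while (\ref{it:comp})--(\ref{it:homot}) are formal consequences of the definition and of Lemma~\ref{lem:quad0}.
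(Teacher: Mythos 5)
The paper does not actually prove Proposition~\ref{prop:maspath}: it cites Théret's thesis and moves on, so your proposal is supplying an argument that the paper leaves to a reference. Taken on its own merits, items~(\ref{it:rev}), (\ref{it:sum}), (\ref{it:homot}) are correct and standard. Item~(\ref{it:symp}) is morally right but you should actually check that $\tau\circ(A\times A)\circ\tau^{-1}$ is a \emph{fibered} linear symplectomorphism of $T^*\C^n$ covering $q\mapsto Aq$; this is what lets you transport a generating function by a linear change of variables without affecting the index. It does hold — writing $\tau^{-1}(q,p)=(q-\tfrac{i}{2}p,\,q+\tfrac{i}{2}p)$ one finds $\tau\circ(A\times A)\circ\tau^{-1}(q,p)=(Aq,\,-iAi\,p)$, and symplecticity $A^T i A=i$ gives $-iAi=(A^{-1})^T$, the correct cotangent lift — but the verification is not as immediate as "conjugating the Lagrangian pulls back a generating function with the same fiber," and it genuinely uses $A\in\Symp(2d)$.

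There are two real gaps. For item~(\ref{it:comp}), invoking Lemma~\ref{lem:quad0} does not do what you want: that lemma only normalizes quadratic generating functions of the \emph{$0$-section}, whereas you need to compare two quadratic generating functions $Q_1$ and $Q'_0$ of an arbitrary linear Lagrangian $L_{R_1}$. This requires the general uniqueness theorem for quadratic generating functions (uniqueness up to stabilization and fibered linear isomorphism), which the paper alludes to in Section~\ref{se:introgf} but never states; you should either cite that result explicitly or reduce to the $0$-section by some preliminary device, rather than misapply Lemma~\ref{lem:quad0}. The more serious issue is item~(\ref{it:rot}): you describe a plan (choose a family $Q_t$, handle the degenerate instants, track the signature) but do not produce the family or the index count, so the key numerical content of the proposition — that the value is $-2$ and not $+2$ or $-1$ — is not established. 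Note also that you cannot fall back on Lemma~\ref{lem:indQ} to do this later, since that lemma is itself deduced \emph{from} Proposition~\ref{prop:maspath}~(\ref{it:rot}); an independent computation (e.g.\ diagonalizing the discrete-action quadratic form $F_{\boldsymbol{\delta}}$ built from the elementary generating functions $\tan(\pi t/n)\lvert w\rvert^2$ for an odd $n$-tuple of rotations, or splitting $[0,1]$ to avoid $t=1/2$ and applying item~(\ref{it:comp}) with the one-variable generating function $\tan(\pi t)\lvert w\rvert^2$) is actually required.
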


Let $(\Phi_t)$ be a Hamiltonian flow on $\C^d$ starting at
$\Phi_0 = \id$.
If $z\in\C^d$ is a fixed-point of $\Phi_1$,
the Maslov index of $z$ is set to be the Maslov index of the path
$t\mapsto\ud\Phi_t(z)$ in $\Symp(2d)$, that is
\begin{equation*}\label{eq:masham}
    \mas(z,(\Phi_t)) := \mas((\ud\Phi_t(z))).
\end{equation*}
Suppose that $F_t:\C^N\to\C$, $t\in[0,1]$,
defines a continuous family of generating functions of $(\Phi_t)$.
Let $\zeta_t\in\Sigma_{F_t}\subset\C^N$ be a continuous family
associated to $\Phi_t(z)$.
Then the continuous family of Hessian $Q_t := \ud^2 F_t(\zeta_t)$
is a continuous family of quadratic generating functions of $\ud\Phi_t(z)$,
thus
\begin{equation*}
    \mas(z,(\Phi_t)) = \ind(\zeta_1,F_1) - \ind(\zeta_0,F_0),
\end{equation*}
where $\ind(\zeta,F):=\ind(\ud^2 F(\zeta))\in\N$ denotes the Morse
index of $F$ at the critical point $\zeta$.

This definition is extended to every symplectic manifold $M^{2d}$
as follows.
Let $(\varphi_t)$ be a Hamiltonian flow on $M^{2d}$ starting at $\varphi_0 =\id$
and let $z\in M$ be a fixed point of $\varphi_1$
such that the loop $t\mapsto\varphi_t(z)$ is contractible.
Let $D^2:=\{ w\in\C \ |\ |w|\leq 1\}$ be the closed unit disk of $\C$.
Since the loop is contractible,
there exists a smooth map $u:D^2\to M$ such that $u(e^{2i\pi t}) = \varphi_t(z)$.
Then there exists a trivialization $D^2\times \C^d\to u^*TM$,
$(w,\zeta)\mapsto \xi(w)\zeta$ so that,
for all $w\in D^2$, $\xi(w):\C^d\to T_{u(w)}M$
is a symplectic map.
Moreover, if we endow $M$ with an almost complex structure,
the trivialization can be made $\C$-linear.
The set of every such trivialization is contractible, for a fixed choice of
$u$ (see \cite[Lemma~5.1]{SZ92} for instance).
Then $\gamma_t := \xi(e^{2i\pi t})^{-1}\ud\varphi_t(z)\xi(1)$, $t\in[0,1]$,
is a symplectic path in $\Symp(2d)$ and
the Maslov index of $z$ with respect to the \emph{capping} $u$ is set to be
\begin{equation*}\label{eq:mashamgen}
    \mas(z,u) := \mas((\gamma_t)).
\end{equation*}
It does not depend on the specific choice of trivialization,
in fact it only depends on the homotopy class of $u$
relative to the boundary $\partial D^2$.
Thus, if $\pi_2(M) = 0$ any choice of $u$ gives the same index.

\subsection{Maslov index of a $\C$-equivariant Hamiltonian diffeomorphism}
\label{se:masequiv}

Let $(\Phi_t)$ be a $\C$-equivariant Hamiltonian flow on $\C^{d+1}$ lifting
a Hamiltonian flow $(\varphi_t)$ on $\CP^d$.
Let $Z_0\in\sphere{2d+1}$ be a fixed point of $\Phi_1$
and denote by $Z_t := \Phi_t(Z_0)$, $t\in[0,1]$,
the associated loop in $\sphere{2d+1}$.
Let $\pi:\sphere{2d+1}\to\CP^d$ be the quotient map.
Let $z_t :=\pi(Z_t)$ be the associated loop in $\CP^d$
so that $z_t = \varphi_t(z_0)$.
Let $U:D^2\to\sphere{2d+1}$ be any smooth capping of $(Z_t)$,
\emph{i.e.} $Z_t = U(e^{2i\pi t})$.
All such cappings are homotopic since $\pi_2(\sphere{2d+1})=0$.
We set $u:=\pi\circ U$.

\begin{prop}\label{prop:maslift}
    With the above notations,
    \begin{equation*}
        \mas(Z_0,(\Phi_t)) = \mas(z_0,u).
    \end{equation*}
\end{prop}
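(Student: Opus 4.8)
The plan is to compare the two symplectic paths whose Maslov indices appear on each side of the equality and show they differ by a correction that has index zero. On the left, we have the path $\Gamma_t := \ud\Phi_t(Z_0)$ in $\Symp(2(d+1))$, living in the ambient $\C^{d+1}$. On the right, by definition, $\mas(z_0,u) = \mas((\gamma_t))$ where $\gamma_t = \xi(e^{2i\pi t})^{-1}\ud\varphi_t(z_0)\xi(1)$ for a $\C$-linear symplectic trivialization $\xi$ of $u^*T\CP^d$. The key geometric fact to exploit is that $\pi : \sphere{2d+1}\to\CP^d$ is a principal $S^1$-bundle and that over $\sphere{2d+1}\subset\C^{d+1}$ the tangent bundle splits $\C$-linearly and $\Phi_t$-equivariantly as $T_Z\C^{d+1} = \C\cdot Z \oplus (T_Z\sphere{2d+1}\cap iT_Z\sphere{2d+1})$, with the second summand projecting isomorphically under $d\pi$ to $T_{\pi(Z)}\CP^d$ once we quotient by the vertical direction $\C\cdot Z$.

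The first step is to restrict $\ud\Phi_t(Z_0)$ to this splitting. Because $\Phi_t$ is $\C$-equivariant, $\Phi_t(\lambda Z) = \lambda\Phi_t(Z)$, differentiating in $\lambda$ at $\lambda=1$ shows $\ud\Phi_t(Z_0)\cdot Z_0 = Z_t$ and, more generally, $\ud\Phi_t(Z_0)$ maps the complex line $\C Z_0$ to $\C Z_t$. On this one-complex-dimensional piece the induced path of symplectic maps $\C\to\C$ is, after a unitary rescaling, a loop based at the identity (since $Z_1 = Z_0$); I would compute its Maslov contribution directly and show it is $0$ — it is a contractible loop in $U(1)$ because $U$ caps it off. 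On the orthogonal complement (the "horizontal" symplectic subspace), $\ud\Phi_t(Z_0)$ descends to exactly $\ud\varphi_t(z_0)$ once we trivialize using the horizontal distribution; the capping $U$ provides precisely such a trivialization of $u^*T\CP^d$, and by the uniqueness statement recalled in the excerpt (the space of $\C$-linear symplectic trivializations for fixed $u$ is contractible) this choice is as good as any for computing $\mas(z_0,u)$.

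Assembling: I would write $\ud\Phi_t(Z_0) = \alpha_t \oplus \beta_t$ with respect to the splitting into $\C Z_0$ and its complement (this requires a $t$-dependent $\C$-linear symplectic change of frame, which by Proposition~\ref{prop:maspath}(\ref{it:symp}) and (\ref{it:homot}) does not change the Maslov index), where $\alpha_t$ contributes the Maslov index of a contractible $U(1)$-loop, namely $0$, and $\beta_t$ is conjugate to the path $\gamma_t$ defining $\mas(z_0,u)$. By the additivity under direct sums, Proposition~\ref{prop:maspath}(\ref{it:sum}), we get $\mas(Z_0,(\Phi_t)) = 0 + \mas(z_0,u)$. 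The main obstacle I anticipate is bookkeeping: making the $\C$-linear, symplectic, $t$-dependent frame for the splitting $T\C^{d+1} = \C Z \oplus H$ sufficiently explicit that the descended path on $H$ is literally $\ud\varphi_t(z_0)$ in the trivialization induced by $U$ (and not off by an extra loop), and checking that the vertical-direction loop $\alpha_t$ really is nullhomotopic — which comes down to the capping $U$ extending $Z_t$ over the disk, so that the frame $U(w)/|U(w)|$ trivializes the complex line bundle over $D^2$. Once these two trivialization claims are in place, the computation is immediate from Proposition~\ref{prop:maspath}.
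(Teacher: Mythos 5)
Your proposal matches the paper's argument in both structure and key ideas: split $T_{Z_0}\C^{d+1}$ into the vertical line $\C Z_0$ and its symplectic-orthogonal complement using the $\C$-equivariance of $\Phi_t$, observe that the capping $U$ supplies a frame defined over the whole disk $D^2$ which simultaneously makes the vertical factor trivial (a nullhomotopic $U(1)$-loop, hence Maslov index $0$) and identifies the horizontal factor with the trivialized path $\gamma'_t$, and then conclude with the direct-sum additivity of Proposition~\ref{prop:maspath}(\ref{it:sum}). The one point to be careful about in writing this up is that Proposition~\ref{prop:maspath}(\ref{it:symp}) covers only conjugation by a \emph{fixed} symplectic matrix; for the $t$-dependent change of frame you must argue, exactly as you anticipate in your closing sentences, that the frame extends over $D^2$ so the family of conjugators is contractible and the resulting path is homotopic relative to endpoints to a constant-frame conjugate, at which point (\ref{it:homot}) and then (\ref{it:symp}) apply — the paper handles this by taking $\Gamma'_t := A_t^{-1}\Gamma_t A_0$ (constant on the right) and then homotoping $A_t^{-1}$ to $A_0^{-1}$.
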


\begin{proof}
For all $t\in[0,1]$,
let $\gamma_t := \ud\varphi_t(z_0) : T_{z_0}\CP^d\to T_{z_t}\C P^d$
and $\Gamma_t := \ud\Phi_t(Z_0)$ which is a path in $\Symp(2(d+1))$.
For all $w\in D^2$, let $\xi(w):\C^d\to T_{u(w)}\CP^d$
be a smooth family of $\C$-linear symplectic map induced by
$u$ as explained above.
Throughout the proof, if $f$ denotes a map whose domain is $D^2$,
then, for $t\in[0,1]$, $f_t:=f(e^{2i\pi t})$.
For all $t\in[0,1]$
let $\xi_t := \xi(e^{2i\pi t})$ and
$\gamma'_t := \xi_t^{-1}\gamma_t\xi_0\in\Symp(2d)$ so that
\begin{equation*}
    \mas(Z_0,(\Phi_t)) = \mas((\Gamma_t))\quad
    \text{and}\quad
    \mas(z_0,u) = \mas((\gamma'_t)).
\end{equation*}
Notice that, for all $Z\in\sphere{2d+1}$, the tangent space
$T_{\pi(Z)}\CP^d \simeq \C^{d+1}/\C Z$ is canonically
isomorphic to $(\C Z)^\bot$
(given a $\C$-subspace $E\subset\C^{d+1}$,
$E^\bot$ denotes its hermitian orthogonal subspace,
which is also its Euclidean orthogonal subspace
or its symplectic orthogonal subspace).
Let $L(w):=(\C U(w))^\bot \to T_{u(w)}\CP^d$, $w\in D^2$,
be the induced continuous family of $\C$-linear symplectic maps.
Let us define the following continuous family of endomorphism of
$\C^{d+1}$ indexed by $w\in D^2$,
\begin{equation*}
    A(w) : \C\times\C^d\to\C U(w)\oplus(\C U(w))^\bot,\quad
    A(w)(\lambda,\zeta) = \lambda U(w) + L(w)^{-1}\xi(w)\zeta.
\end{equation*}
Since the linear maps $\lambda\mapsto \lambda U(w)$
and $L(w)^{-1}\xi(w)$ are symplectic maps
and since both direct sums $\C\times\C^d$
and $\C U(w)\oplus (\C U(w))^\bot$ 
are symplectic-orthogonal sums,
$A(w)\in\Symp(2(d+1))$.

Since $\Phi_t$ is a $\C$-equivariant diffeomorphism,
The symplectic map $\ud \Phi_t(Z_0)=\Gamma_t$ sends the orthogonal subspaces
$\C Z_0$ and $(\C Z_0)^\bot$ respectively on
$\C Z_t$ and $(\C Z_t)^\bot$ with
\begin{equation*}
    \Gamma_t(\lambda Z_0 + \zeta) =
    \lambda Z_t + L_t^{-1}\gamma_t L_0 \zeta,\quad
    \forall\lambda\in\C,\forall\zeta\in(\C Z_0)^\bot,
\end{equation*}
where $L_t := L(e^{2i\pi t}) : (\C Z_t)^\bot \to T_{z_t}\CP^d$.
Thus $\Gamma'_t := A_t^{-1}\Gamma_t A_0$ is the symplectic path
$\Gamma'_t = I_2 \oplus \gamma'_t$, so
Proposition~\ref{prop:maspath}~(\ref{it:sum}) implies
$\mas((\Gamma'_t)) = \mas((\gamma'_t))$.
Since $A_t = A(e^{2i\pi t})$ with $A:D^2\to\Symp(2(d+1))$
continuous,
$(\Gamma'_t)$ is homotopic to $(A_0^{-1}\Gamma_t A_0)$ relative
to endpoints,
thus $\mas((\Gamma'_t))=\mas((A_0^{-1}\Gamma_t A_0))
=\mas((\Gamma_t))$,
according to Proposition~\ref{prop:maspath}~(\ref{it:homot})
and (\ref{it:symp}).
\end{proof}

\subsection{Bott iteration inequalities}

Let $(\Phi_t)$ be a Hamiltonian flow on $\C^d$ starting at
$\Phi_0 = \id$ and let $z\in\C^d$ be a fixed point.
Even though $(\Phi_t(z))$ is a loop in $\C^d$,
$\Gamma_t:=\ud\Phi_t(z)$, $t\in\R_+$, defines only 
a path in $\Symp(2d)$,
so that in general $\mas(\Gamma_{kt},t\in[0,1])\neq
k\mas(\Gamma_t,t\in[0,1])$.
Notice that the path $(\Gamma_t)_{t\in\R}$ only depends on
$(\Gamma_t)_{t\in[0,1]}$ since
$\Gamma_{t+k} = \Gamma_t\Gamma_1^k$ for $k\in\N$ and $t\geq 0$.

\begin{thm}\label{thm:mmas}
    Let $\Gamma :=(\Gamma_t)_{t\geq 0}$ be a continuous path in $\Symp(2d)$
    such that $\Gamma_0 = I_{2d}$ and $\Gamma_{t+k}=\Gamma_t\Gamma_1^k$
    for all $k\in\N$ and $t>0$.
    Then the \emph{average Maslov index}
    \begin{equation*}
        \mmas(\Gamma) := \lim_{k\to\infty}\frac{\mas(\Gamma_{kt},t\in[0,1])}{k}\in\R
    \end{equation*}
    is a well defined real number
    and we have the iteration inequalities
    \begin{equation*}
        \begin{gathered}
        k\mmas(\Gamma) - d \leq \mas(\Gamma_{kt},t\in[0,1]),\\
        \mas(\Gamma_{kt},t\in[0,1]) + \dim\ker(\Gamma_1^k - I_{2d})
        \leq  k\mmas(\Gamma) +d.
        \end{gathered}
    \end{equation*}
\end{thm}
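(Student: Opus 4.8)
The plan is to reduce everything to the classical Bott iteration formula for the Conley--Zehnder index, which is exactly what the Maslov index defined via generating functions is (up to normalization conventions, fixed e.g. by Proposition~\ref{prop:maspath}~(\ref{it:rot})). The key algebraic input is the splitting of a symplectic path into its ``rotational'' eigenvalue contributions. Concretely, I would first recall that any $\Gamma_1\in\Symp(2d)$ has eigenvalues on the unit circle, off the unit circle in quadruples $\lambda,\bar\lambda,\lambda^{-1},\bar\lambda^{-1}$, and real pairs $\mu,\mu^{-1}$; the contributions of the non-elliptic eigenvalues to $\mas(\Gamma_{kt},t\in[0,1])$ are bounded independently of $k$, so the only source of linear-in-$k$ growth and of the defect terms is the elliptic part. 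One then writes, using Proposition~\ref{prop:maspath}~(\ref{it:sum}) and (\ref{it:symp}), $\mas(\Gamma_{kt},t\in[0,1]) = \sum_j \mas(\Gamma^{(j)}_{kt},t\in[0,1]) + O(1)$ where each $\Gamma^{(j)}$ is (homotopic, rel endpoints, to) a genuine rotation path of some angle $\theta_j$ composed with a fixed path, and the error $O(1)$ is bounded by $2d$ uniformly in $k$.

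Second, for a rotation loop $S$ of angle $2\pi a$ iterated $k$ times, Proposition~\ref{prop:maspath}~(\ref{it:rot}) combined with (\ref{it:comp}) and a homotopy argument gives $\mas(S^k) = -2\lfloor ka\rfloor$ or $-2\lceil ka\rceil$ depending on conventions at the integer points; in all cases $\mas(S^k) = -2ka + e_k$ with $|e_k|\le 2$, and $e_k \equiv 0$ unless $\Gamma_1^k$ has $1$ as an eigenvalue of that block. Summing over the elliptic blocks, I obtain $\mas(\Gamma_{kt},t\in[0,1]) = k\alpha + r_k$ with $\alpha := \sum_j(-2a_j)$ plus the (fixed, $k$-independent) contribution of the hyperbolic/real part, and $|r_k|$ bounded by $d$ once one checks that the number of eigenvalue pairs contributing a nonzero defect at iteration $k$ is at most $d$ and each defect is at most $1$ in the appropriate normalization — this is precisely where the constant $d$ (rather than $2d$) and the $\dim\ker(\Gamma_1^k - I_{2d})$ correction enter, via the standard bookkeeping that an elliptic eigenvalue $e^{2\pi i a_j}$ with $ka_j\in\Z$ contributes exactly to $\dim\ker(\Gamma_1^k-I)$. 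Dividing by $k$ and letting $k\to\infty$ shows $\mmas(\Gamma) = \alpha$ exists, and rearranging $\mas(\Gamma_{kt},t\in[0,1]) = k\mmas(\Gamma) + r_k$ with the sign-refined bounds on $r_k$ yields the two asserted inequalities.

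The main obstacle, and the part requiring the most care, is the precise accounting of the defect term $r_k$: showing that it is squeezed between $-d$ and $d - \dim\ker(\Gamma_1^k - I_{2d})$ rather than merely between $-2d$ and $2d$. This requires treating the eigenvalue $1$ (and $-1$) of $\Gamma_1$ with the Krein-type multiplicity bookkeeping: a Jordan block of $\Gamma_1$ at an elliptic eigenvalue contributes a half-integer jump, and one must pair up the $\dim\ker$ correction with the ``ceiling vs.\ floor'' ambiguity in $\mas(S^k)$. An alternative, and perhaps cleaner, route avoiding eigenvalue analysis entirely is to work directly with a continuous family of quadratic generating functions $Q_t$ for $\Gamma_t$, use the composition formula for generating functions to produce generating functions $Q^{(k)}_t$ for $\Gamma_{kt}$, and track how the Morse index of a quadratic form behaves under this iterated stabilization; this reduces the inequalities to a statement about indices of block-tridiagonal symmetric matrices, for which the bounds $-d$ and $d$ are visible from an interlacing argument. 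I would present the first (eigenvalue) approach as the main line since the structural facts about $\Symp(2d)$ are standard and citable (\cite{SZ92,Lon02}), and only the bookkeeping is new packaging.
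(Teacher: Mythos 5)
The paper does not prove Theorem~\ref{thm:mmas}; it simply cites \cite[Theorem~3.6]{Maz16}, whose argument (consistently with the title ``The Morse index of Chaperon's generating families'') proceeds through generating functions rather than spectral decomposition. So your primary route (eigenvalue/Krein bookkeeping) is genuinely different from the one the paper relies on, while the ``alternative, cleaner route'' you mention at the end --- composing quadratic generating families for the iterates and tracking the Morse index of the resulting block-tridiagonal quadratic form --- is essentially what the cited reference does and is also much better aligned with the machinery developed in Sections~\ref{se:introgf}--\ref{se:gf}.

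Within your primary route there is a real gap at the very first step. You claim $\mas(\Gamma_{kt},t\in[0,1]) = \sum_j \mas(\Gamma^{(j)}_{kt},t\in[0,1]) + O(1)$, ``using Proposition~\ref{prop:maspath}~(\ref{it:sum}) and (\ref{it:symp})'', but these properties only apply once the \emph{path} is a direct sum of blocks. The eigenvalue decomposition you invoke is a decomposition of the single matrix $\Gamma_1$; the path $\Gamma_t$ for $0<t<1$ has no reason to respect it, and Proposition~\ref{prop:maspath}~(\ref{it:sum}) cannot be applied. To make this step honest you must first replace $\Gamma$ by a standard path $\gamma$ from $I$ to $\Gamma_1$ that \emph{does} split blockwise (rotations, hyperbolic shears, possibly nontrivial Jordan blocks), note that $\Gamma$ and $\gamma$ differ by a loop in $\Symp(2d)$, and then use Proposition~\ref{prop:maspath}~(\ref{it:comp}) together with homotopy invariance to see that the loop contributes exactly $k\cdot\mas(\text{loop})$ to $\mas(\Gamma_{kt})$ --- a term that is absorbed in $k\,\mmas(\Gamma)$ and produces no defect. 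This ``normalization by a loop'' is the structural point that your sketch elides.

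A second, more quantitative issue: your estimate $\mas(S^k)=-2ka+e_k$ with ``$|e_k|\le 2$'' is both imprecise and has the wrong sign structure for the convention fixed in Section~\ref{se:mas}. Working through a single $2\times 2$ counterclockwise block with the normalization $\mas(S)=-2$ of Proposition~\ref{prop:maspath}~(\ref{it:rot}) gives $\mas(R(2\pi a\,kt),t\in[0,1])=-2\lfloor ka\rfloor$, so $e_k=2\{ka\}\in[0,2)$ is \emph{nonnegative}, vanishing precisely when $ka\in\Z$ (the degenerate case); a positive-hyperbolic block gives a constant defect $1$, etc. Summing these one-sided contributions does \emph{not} put the defect in the symmetric window $[-d,\,d]$; it puts it in $[0,\,2d-\nu_k]$ where $\nu_k=\dim\ker(\Gamma_1^k-I_{2d})$. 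If you carry the bookkeeping through with the paper's $\mas$, you will not reproduce the inequalities exactly as printed; they match the cited source only up to a constant shift by $d$ coming from the different index normalizations (generating-function index vs.\ Long's $i_1$/Conley--Zehnder index, which differ by $d$ on the constant path). Since the application downstream (equation~(\ref{eq:suppmind})) only uses the \emph{width} $2d$ of the resulting window and not its placement, the shift is harmless there, but a proof that aims to reproduce the stated two-sided bounds literally has to be explicit about which normalization it is using. Be careful not to silently average the one-sided $[0,2)$ bound into a two-sided $[-1,1]$.

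In short: the strategy is sound in outline, but (i) the blockwise splitting must be done after a homotopy-plus-loop normalization of the path, and (ii) the defect per block must be tracked with its actual sign and with the nullity contribution; doing so also exposes a normalization mismatch between the statement as written and the paper's own definition of $\mas$.
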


We refer to \cite[Theorem~3.6]{Maz16} for a more precise statement
and a proof.
Notice that, by definition, the average Maslov index is homogeneous:
\begin{equation*}\label{eq:mmashom} 
    \mmas((\Gamma_{kt}))=k\mmas((\Gamma_t)).
\end{equation*}
Let us denote by $\mmas(z,(\Phi_t))\in\R$ the average Maslov index
of the fixed point $z$, that is 
\begin{equation*}
\mmas(z,(\Phi_t)):=\mmas(\ud\Phi_t(z),t\geq 0).
\end{equation*}
So that Theorem~\ref{thm:mmas} gives for all $k\in\N$,
\begin{equation*}
        \begin{gathered}
            k\mmas(z,(\Phi_t)) - d \leq \mas(z,(\Phi_{kt})),\\
            \mas(z,(\Phi_{kt})) + \dim\ker(\ud\Phi_k(z)-\id)
            \leq  k\mmas(z,(\Phi_t)) +d.
        \end{gathered}
\end{equation*}
This inequality can be extended to every symplectic manifold $M^{2d}$ as follows.
Let $(\varphi_t)$ be a Hamiltonian flow on $M^{2d}$ starting at $\varphi_0=\id$
and let $z\in M$ be a fixed point of $\varphi_1$ such that $(\varphi_t(z))$
is contractible.
Let $u:D^2\to M$ be a capping of $z$ and $\xi(w):\C^d\to T_{u(w)}M$,
$w\in D^2$, be an induced trivialization.
For $k\in\N^*$, let $u^k:D^2\to M$ be the smooth map
$u^k(w):=u(w^k)$, $w\in D^2$.
This map is the natural capping of $z$ as a fixed point of the time-one map
of the Hamiltonian flow $(\varphi_{kt})$ induced by $(z,u)$.
If $\bar{z}:=(z,u)$, it is often denoted by $\bar{z}^k = (z,u^k)$.
An induced trivialization is $\xi^k(w):=\xi(w^k)$,
so that $\gamma^{(k)}_t = \gamma_{kt}$,
where $\gamma^{(k)}_t := \xi^k(e^{2i\pi t})^{-1}\ud\varphi_{kt}(z)\xi^k(1)$,
and $\gamma_t := \xi(e^{2i\pi t})^{-1}\ud\varphi_t(z)\xi(1)$,
$t\geq 1$.
Since $\mas(\bar{z}^k):=\mas(\gamma^{(k)}_t,t\in[0,1])$
with $\gamma_{t+k}=\gamma_t\gamma_1^k$ for all $k\in\N$ and $t\geq 0$,
Theorem~\ref{thm:mmas} gives for all $k\in\N$,
\begin{equation}\label{eq:mmascapped}
    \begin{gathered}
        k\mmas(\bar{z}) - d \leq \mas(\bar{z}^k),\\
        \mas(\bar{z}^k) + \dim\ker(\ud\varphi_k(z)-\id)
        \leq  k\mmas(\bar{z}) +d.
        \end{gathered}
\end{equation}
where $\mmas(\bar{z}) := \mmas(\gamma_t,t\geq 0)$ is the average Maslov
index of the capped fixed point $\bar{z}=(z,u)$.
Let $(\Phi_t)$ be a $\C$-equivariant Hamiltonian flow of $\C^{d+1}$ with $\Phi_0=\id$ which
is the lift of a Hamiltonian flow $(\varphi_t)$ of $\CP^d$
with $\varphi_0 =\id$.
Let $Z\in\sphere{2d+1}$ be a fixed point of $\Phi_1$
and $\bar{z}=(\pi(Z),u)$ be the capped fixed point of $\varphi_1$
associated to it, then
\begin{equation*}
    \mas(Z,(\Phi_{kt})) = \mas(\bar{z}^k),
        \quad\forall k\in\N.
\end{equation*}
Indeed, if $U:D^2\to\sphere{2d+1}$ is a capping of $Z$
so that $u=\pi\circ U$, then $U^k$ is a capping of $Z$
relative to $(\Phi_{kt})_{t\in[0,1]}$
and $u^k=\pi\circ U^k$
(recall that $\mas(Z,(\Phi_{kt}))$ does not depend
on the choice of capping since $\pi_2(\sphere{2d+1})=0$).
So, according to equation (\ref{eq:mmascapped}), for every fixed point $Z\in\sphere{2d+1}$
of any $\C$-equivariant Hamiltonian flow $(\Phi_t)$ of $\C^{d+1}$
which is the lift of some Hamiltonian flow $(\varphi_t)$ of $\CP^d$,
for all $k\in\N$,
\begin{equation}\label{eq:mmasequiv}
    \begin{gathered}
            k\mmas(Z,(\Phi_t)) - d \leq \mas(Z,(\Phi_{kt})),\\
            \mas(Z,(\Phi_{kt})) + \dim\ker(\ud\varphi_k(z)-\id)
            \leq  k\mmas(Z,(\Phi_t)) +d,
    \end{gathered}
\end{equation}
where $z:=\pi(Z)$.

\section{Projective join}\label{se:projectivejoin}

In \cite[Appendix]{Giv90}, Givental studied the cohomology of
projective joins by using $S^1$-equivariant cohomology.
Here, we give an alternative way to study the special case of joins with
a projective subspace.

Let $m,n\in\N$ and let $\pi:\C^{m+n+2}\setminus 0\to \CP^{m+n+1}$ be the
quotient projection. We projectively embed $\CP^m$ and $\CP^n$ in $\CP^{m+n+1}$
by identifying $\CP^m$ with $\pi(\C^{m+1}\times 0\setminus 0)$ and $\CP^n$ with
$\pi(0\times \C^{n+1}\setminus 0)$ so that $\CP^n$ and $\CP^m$ do not
intersect.  This is equivalent to considering two projective subspaces of
respective $\C$-dimensions $m$ and $n$ in general position.  Let $A\subset
\CP^m$ and $B\subset \CP^n$. Then the projective join $A*B\subset \CP^{m+n+1}$
is the union of every projective lines intersecting $A$ and $B$.  In other
words, $A*B = A\cup B\cup \pi(\widetilde{A}\times\widetilde{B})$ where $\widetilde{A}$ and
$\widetilde{B}$ are the lifts of $A$ and $B$ to $\C^{m+1}\setminus 0$ and
$\C^{n+1}\setminus 0$ respectively.  One can remark that $\CP^m *\CP^n =
\CP^{m+n+1}$ and that if $[a:b]\in\CP^{m+n+1}$, with $a\in\C^{m+1}$ and $b\in\C^{n+1}$,
does not belong to $\CP^m$ nor to $\CP^n$, then only one projective line
intersecting $\CP^m$ and $\CP^n$ contains $[a:b]$, namely the line joining $[a:0]$
to $[0:b]$.
Given $A\subset \CP^m$, we denote by $p_A : A*\CP^n\setminus \CP^n\to A$
the projection $[a:b]\mapsto [a:0]$.

In this paper, $H^*$ will stand for the singular cohomology with integer coefficients.
Given $A\subset \CP^m$, let $T\subset A*\CP^m$ be a tubular neighborhood of $\CP^m$
such that $(A*\CP^n,T)$ retracts on $(A*\CP^n,\CP^n)$.
By excision $H^*(A*\CP^n,\CP^n)\simeq H^*(A*\CP^n\setminus \CP^n,T\setminus \CP^n)$.
Using this identification, we define the cup-product 
$H^*(A*\CP^n\setminus\CP^n)\otimes H^*(A*\CP^n,\CP^n)
\to H^*(A*\CP^n,\CP^n)$ by the following commutative diagram
\begin{equation*}
    \begin{gathered}
        \resizebox{.9\displaywidth}{!}{
            \xymatrix{
                H^*(A*\CP^n\setminus\CP^n)\otimes H^*(A*\CP^n,\CP^n) \ar[r]^-{\smile} \ar[d]^-{\simeq} &
                H^*(A*\CP^n,\CP^n) \ar[d]^-{\simeq} \\
                H^*(A*\CP^n\setminus\CP^n)\otimes H^*(A*\CP^n,T) & H^*(A*\CP^n,T) \\
                H^*(A*\CP^n\setminus\CP^n)\otimes H^*(A*\CP^n\setminus \CP^n,T\setminus\CP^n)
                \ar[u]^-{\simeq} \ar[r]^-{\smile} & H^*(A*\CP^n\setminus \CP^n,T\setminus\CP^n)
                \ar[u]^-{\simeq}
        } }
    \end{gathered},
\end{equation*}
where the vertical arrows are induced by inclusions and the bottom arrow is
the usual cup-product.
According to the long exact sequence of the couple $(\CP^{m+n+1},\CP^n)$,
the map $H^{2(n+1)}(\CP^{m+n+1},\CP^n)\to H^{2(n+1)}(\CP^{m+n+1})$
induced by the inclusion is an isomorphism (the dimension of $\CP^n$
being $2n<2n+1$) so that we can see the class $u^{n+1}\in H^{2(n+1)}(\CP^{m+n+1})$
in $H^{2(n+1)}(\CP^{m+n+1},\CP^n)$ \emph{via} this identification.
Given $A\subset\CP^m$, let $t_A\in H^{2(n+1)}(A*\CP^n,\CP^n)$
be the image of $u^{n+1}\in H^{2(n+1)}(\CP^{m+n+1},\CP^n)$
induced by the inclusion and let 
$f_A:H^*(A)\to H^{*+2(n+1)}(A*\CP^n)$ be the morphism given by
$f_A(v) := p_A^*(v) \smile t_A$.

\begin{prop}\label{prop:join}
    Let $A\subset\CP^m$.
    One has the following isomorphisms:
    \begin{equation*}
        H^k(A*\CP^n) \simeq 
        \begin{cases}
            H^k(\CP^n) & \text{for } k\leq 2n+1, \\
            H^{k-2(n+1)}(A) & \text{for } k>2n+1,
        \end{cases}
    \end{equation*}
    where the isomorphisms $H^k(A *\CP^n)\to H^k(\CP^n)$ are induced
    by the inclusion and the isomorphisms $H^{k-2(n+1)}(A)\to H^k(A*\CP^n)$ are given by $f_A$.
\end{prop}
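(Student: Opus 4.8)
The plan is to compute the cohomology of $A*\CP^n$ by decomposing it into the pair $(A*\CP^n, \CP^n)$ and using the relative cohomology computed via the deformation retract onto the open part $A*\CP^n \setminus \CP^n$, which fibers over $A$. First I would set up the long exact sequence of the pair $(A*\CP^n, \CP^n)$:
\begin{equation*}
    \cdots \to H^{k}(A*\CP^n, \CP^n) \to H^{k}(A*\CP^n) \to H^{k}(\CP^n) \to H^{k+1}(A*\CP^n,\CP^n)\to\cdots.
\end{equation*}
The key claim to establish is that $H^{k}(A*\CP^n,\CP^n) \simeq H^{k-2(n+1)}(A)$, with the isomorphism realized so that the composite $H^{k-2(n+1)}(A)\to H^{k}(A*\CP^n,\CP^n)\to H^{k}(A*\CP^n)$ is exactly $f_A$. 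Granting this claim, the groups $H^{k}(A*\CP^n,\CP^n)$ vanish for $k \le 2n+1$, so the exact sequence immediately gives that the inclusion-induced map $H^{k}(A*\CP^n)\to H^{k}(\CP^n)$ is an isomorphism for $k\le 2n+1$ (using also $H^{2n+2}(A*\CP^n,\CP^n)$ possibly nonzero but $H^{2n+1}(\CP^n)=0$ to close the low end, and that $H^{2n+2}(A*\CP^n,\CP^n)\to H^{2n+2}(A*\CP^n)$ is injective once $H^{2n+1}(\CP^n)\to H^{2n+2}(A*\CP^n,\CP^n)$ is seen to be zero). For $k > 2n+1$ we have $H^{k}(\CP^n)=0=H^{k-1}(\CP^n)$, hence $H^{k}(A*\CP^n,\CP^n)\xrightarrow{\sim} H^{k}(A*\CP^n)$, and combined with the claim this gives $H^{k-2(n+1)}(A)\simeq H^{k}(A*\CP^n)$ via $f_A$.

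For the key claim, I would use excision and the tubular neighborhood $T$ of $\CP^n$ in $A*\CP^n$: as noted in the text, $H^{*}(A*\CP^n,\CP^n)\simeq H^{*}(A*\CP^n\setminus\CP^n, T\setminus\CP^n)$. Now $A*\CP^n\setminus\CP^n$ deformation retracts onto (a copy of) $A$ along the rays of the projective lines joining $A$ to $\CP^n$ — more precisely the map $p_A:[a:b]\mapsto[a:0]$ exhibits $A*\CP^n\setminus\CP^n$ as the total space of a fiber bundle over $A$ with fiber $\C^{n+1}$ (an affine bundle, the restriction of the tautological-type bundle), and $T\setminus\CP^n$ corresponds fiberwise to a punctured neighborhood of infinity, i.e. to $\C^{n+1}\setminus\{0\}$ (homotopy equivalent to $S^{2n+1}$). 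So the pair $(A*\CP^n\setminus\CP^n, T\setminus\CP^n)$ is, fiberwise, $(\C^{n+1}, \C^{n+1}\setminus 0)$, whose relative cohomology is concentrated in degree $2(n+1)$ and is free of rank one — a Thom isomorphism situation. Applying the Thom isomorphism theorem for this rank-$(n+1)$ complex vector bundle over $A$ (orientability is automatic for complex bundles) yields $H^{k}(A*\CP^n\setminus\CP^n, T\setminus\CP^n)\simeq H^{k-2(n+1)}(A)$, with the isomorphism given by $v\mapsto p_A^*(v)\smile \tau$ where $\tau$ is the Thom class.

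It then remains to identify the Thom class with $t_A$, i.e. to check that $t_A\in H^{2(n+1)}(A*\CP^n,\CP^n)$ — the image of $u^{n+1}$ under inclusion — restricts on each fiber $(\C^{n+1},\C^{n+1}\setminus 0)$ to the generator. This is the main obstacle: one must trace through the identifications carefully. I would do it by naturality: for $A = \{pt\}$ the join $A*\CP^n$ is $\CP^{n+1}$, $t_A$ is (the image of) $u^{n+1}\in H^{2(n+1)}(\CP^{n+1})$, which is the generator, and $p_A^*$ is trivial, so the formula $f_A(v)=p_A^*(v)\smile t_A$ reduces to multiplication by a generator — confirming $t_{pt}$ is the Thom class over a point. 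For general $A$, functoriality of the construction under the inclusion $\{pt\}\hookrightarrow A$ (compatible with the inclusions into $\CP^{m+n+1}$, hence with $u^{n+1}$) shows $t_A$ pulls back to $t_{pt}$, hence restricts to the fiber generator; so $t_A$ is indeed a Thom class and $f_A$ is the Thom isomorphism followed by $H^{k}(A*\CP^n,\CP^n)\to H^{k}(A*\CP^n)$. Finally, for the low-degree range one checks directly that $f_A$ lands in degrees $>2n+1$ and that the cup-product structure used to define $t_A$ is compatible with the exact sequence, closing the argument. The genuinely delicate points are (i) verifying that $p_A$ really is a locally trivial $\C^{n+1}$-bundle (a chart-by-chart check on $\CP^m$) and (ii) the Thom-class identification; everything else is a formal consequence of the long exact sequence.
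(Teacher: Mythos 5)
Your proposal is correct and follows essentially the same route as the paper: long exact sequence of the pair $(A*\CP^n,\CP^n)$, identification of $p_A : A*\CP^n\setminus\CP^n\to A$ as a rank-$(n+1)$ complex vector bundle, and the Thom isomorphism applied after excising to a tubular neighborhood of $\CP^n$. The only cosmetic differences are that you read off the splitting directly from the complementary degree ranges in the long exact sequence rather than first observing (as the paper does) that $i^*:H^*(A*\CP^n)\to H^*(\CP^n)$ is surjective via the factorization through $H^*(\CP^{m+n+1})$, and that you pin down the Thom class $t_A$ by restricting to a point $\{pt\}\hookrightarrow A$ instead of via the extremal case $A=\CP^m$; both variants are equally valid.
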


\begin{proof}
    Let us consider the long exact sequence of the couple $(A*\CP^n,\CP^n)$:
    \begin{equation}\label{eq:lesACPn}
        \cdots \to H^*(A*\CP^n,\CP^n)\xrightarrow{j^*}
        H^*(A*\CP^n) \xrightarrow{i^*} H^*(\CP^n) \to \cdots
    \end{equation}
    The inclusions of $A*\CP^n$ and $\CP^n$ in $\CP^{m+n+1}$ give the
    following commutative diagram:
    \begin{equation*}
        \xymatrix{ H^*(A *\CP^n) \ar[r]^-{i^*} & H^*(\CP^n) \\
            H^*(\CP^{m+n+1}) \ar[u] \ar[ur]
        }
    \end{equation*}
    where the diagonal arrow is onto (we recall that $\CP^n$
    is projectively embedded inside $\CP^{m+n+1}$), thus $i^*$ is onto.
    Hence the long exact sequence (\ref{eq:lesACPn}) can be reduced to the
    short exact sequence
    \begin{equation}\label{eq:sesACPn}
        0 \to H^*(A*\CP^n,\CP^n)\xrightarrow{j^*}
        H^*(A*\CP^n) \xrightarrow{i^*} H^*(\CP^n) \to 0.
    \end{equation}
    
    Let us consider $p_A : A*\CP^n\setminus\CP^n\to A$.
    This projection defines a complex vector bundle of dimension $n+1$.
    Indeed, let $E_A := A*\CP^n\setminus\CP^n$ and $U_i \subset \CP^{m+n+1}$
    be the affine chart $\{ [a_0:\cdots:a_m:z_0:\cdots:z_n]\ |\ a_i\neq 0\}$.
    Since the intersection of a projective line with the projective hyperplane
    $\CP^{m+n+1}\setminus U_i$ is either a point or the projective line itself,
    we see that $p_A^{-1}(A\cap U_i) = E_A\cap U_i$.
    We then have the trivialization $E_A\cap U_i \simeq A\cap U_i \times \C^{n+1}$
    given by $[a:z]\mapsto ([a],z/a_i)$.
    Thus $E_A$ is a fiber bundle, moreover this is the restriction of $E_{\CP^m}$
    to $A$. We can even say that $E_{\CP^m} \simeq (\gamma^1_m)^{\oplus(n+1)}$
    where $\gamma^1_m$ is the tautological fiber bundle of $\CP^m$,
    by looking at the transition maps of the above trivialization charts
    (but this will not be relevant for us).
    Let us endow $\CP^{m+n+1}$ with the Riemannian metric induced by the round
    metric of $\sphere{2(m+n)+3}$ and let $T\subset A*\CP^n$ be the tubular neighborhood
    of $\CP^n$ defined as the set of points at distance less than $r\in (0,\pi/2)$
    of $\CP^n$. Then the topological pair $(A*\CP^n,T)$ retracts on $(A*\CP^n,\CP^n)$
    so that the inclusion map induces an isomorphism
    $H^*(A*\CP^n,\CP^n)\simeq H^*(A*\CP^n,T)$ in cohomology.
    Since the compact $\CP^n$ is included in the interior of $T$,
    by excision $H^*(A*\CP^n,T)\simeq H^*(E_A,T\cap E_A)$.
    In the trivialization charts, each fibers of $E_A\setminus T$ is a round
    ball of $\C^{n+1}$ so that $(E_A,E_A\setminus A)$ retracts on
    $(E_A,T\cap E_A)$.
    According to Thom isomorphism theorem,
    \begin{equation*}
        H^{*-2(n+1)}(A) \simeq H^*(E_A,E_A\setminus A) \simeq H^*(A*\CP^n,\CP^n),
    \end{equation*}
    where the isomorphism $H^{*-2(n+1)}(A)\to H^*(A*\CP^n,\CP^n)$
    is given by the cup-product of the pull-back of the class by $p_A$
    with the Thom class $t'_A\in H^{2(n+1)}(A*\CP^n,\CP^n)$.
    Furthermore, since $H^k(\CP^n)$ is zero when $k>2n$ and
    $H^k(A*\CP^n,\CP^n)$ is zero when $k<2(n+1)$, the short exact sequence
    (\ref{eq:sesACPn}) obviously decomposes: $H^*(A*\CP^n)\simeq
    H^*(A*\CP^n,\CP^n)\oplus H^*(\CP^n)$.

    Since $E_A$ is the restriction of $E_{\CP^m}$, the Thom class $t'_A$
    is the image of the Thom class $t'_{\CP^m}$ under the morphism
    induced by inclusion. 
    Since $j^*$ must be an isomorphism in degree $2(n+1)$
    in the exact sequence (\ref{eq:sesACPn}) for $A=\CP^m$,
    we must have $t'_{\CP^m} = \pm u^{n+1}$ (recall that
    $\CP^m *\CP^n = \CP^{m+n+1}$).
    In fact $t'_{\CP^m} = u^{n+1} = t_{\CP^m}$ as the orientation of a complex fiber $\simeq \C^{n+1}$
    coincides with the orientation of a projective subspace of $\C$-dimension
    $n+1$ (they all come from the complex structure of $\CP^{m+n+1}$).
\end{proof}

Following Givental, we define $\ell(A)\in\N$ for $A\subset\CP^N$
as the rank of the morphism $H^*(\CP^N)\to H^*(A)$ induced by the inclusion
(\emph{e.g.} $\ell(\CP^n) = n+1$).
This definition coincides with the equivariant cohomological index
defined by Fadell and Rabinowitz \cite{FR78} (in the special case of the free action of $S^1$
on $\sphere{2N+1}$).

\begin{cor}[{\cite[Corollary~A.2]{Giv90}}]\label{cor:homlength}
    Let $A\subset\CP^m$, then $\ell(A*\CP^n) = \ell(A)+n+1$.
\end{cor}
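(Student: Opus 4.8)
The plan is to compute the rank of the map $H^*(\CP^{m+n+1}) \to H^*(A*\CP^n)$ induced by the inclusion $A*\CP^n \hookrightarrow \CP^{m+n+1}$, using the structural description of $H^*(A*\CP^n)$ provided by Proposition~\ref{prop:join}. Recall $H^*(\CP^{m+n+1}) = \Z[u]/(u^{m+n+2})$ with $u$ in degree $2$, so I need to determine for which $j \in \{0,1,\dotsc,m+n+1\}$ the class $u^j$ restricts to a nonzero class in $H^{2j}(A*\CP^n)$, and then argue that the nonzero restrictions are linearly independent (which is automatic here since they live in distinct degrees).

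First I would handle the low-degree range. For $2j \leq 2n+1$, i.e. $j \leq n$, Proposition~\ref{prop:join} says the inclusion $\CP^n \hookrightarrow A*\CP^n$ induces an isomorphism $H^{2j}(A*\CP^n) \xrightarrow{\sim} H^{2j}(\CP^n)$. Since the composite $\CP^n \hookrightarrow A*\CP^n \hookrightarrow \CP^{m+n+1}$ is the standard projective embedding, $u^j$ restricts to the generator of $H^{2j}(\CP^n) \neq 0$ for $j \leq n$; hence $u^j$ restricts nontrivially to $H^{2j}(A*\CP^n)$ for each $j = 0,1,\dotsc,n$. That gives $n+1$ independent classes in the image.

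Next I would treat the high-degree range $j \geq n+1$. Here the key point is already essentially contained in the proof of Proposition~\ref{prop:join}: the short exact sequence \eqref{eq:sesACPn} splits, and in degree $2(n+1)$ the map $j^* : H^{2(n+1)}(A*\CP^n,\CP^n) \to H^{2(n+1)}(A*\CP^n)$ sends the Thom class $t'_A = t_A$ (the image of $u^{n+1}$ under $H^*(\CP^{m+n+1},\CP^n) \to H^*(A*\CP^n,\CP^n)$) to $j^*(t_A)$, which by commutativity of the inclusion-induced maps is precisely the restriction of $u^{n+1} \in H^{2(n+1)}(\CP^{m+n+1})$. For $j \geq n+1$ write $u^j = u^{n+1}\cdot u^{j-n-1}$; restricting and using the module structure, the restriction of $u^j$ to $H^{2j}(A*\CP^n)$ equals $f_A(\mathrm{res}(u^{j-n-1}))$ up to identifying $t_A$-multiplication with $f_A$ on $p_A^*$-pullbacks, where $\mathrm{res} : H^*(\CP^m) \to H^*(A)$ is the restriction map and I use that $p_A^*$ composed with $H^*(\CP^m)$-restriction agrees with $\CP^{m+n+1}$-restriction under $p_{\CP^m}$. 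By Proposition~\ref{prop:join}, $f_A$ is an isomorphism onto $H^{\geq 2(n+1)}(A*\CP^n)$, so the restriction of $u^j$ is nonzero in degree $2j$ if and only if $\mathrm{res}(u^{j-n-1}) \neq 0$ in $H^{2(j-n-1)}(A)$. Therefore the number of indices $j \geq n+1$ with $u^j$ restricting nontrivially to $H^*(A*\CP^n)$ equals the number of powers $u^i$ ($i = j-n-1 \geq 0$) restricting nontrivially to $H^*(A)$, which is exactly $\ell(A)$.

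Combining the two ranges: the image of $H^*(\CP^{m+n+1}) \to H^*(A*\CP^n)$ is spanned by $n+1$ classes from degrees $\leq 2n$ together with $\ell(A)$ classes from degrees $\geq 2(n+1)$, all in pairwise distinct degrees, hence linearly independent. Thus $\ell(A*\CP^n) = \ell(A) + n+1$. The main obstacle I anticipate is the bookkeeping that identifies the restriction-of-$u^{n+1}$ with $j^*(t_A)$ and, more generally, that matches multiplication by $u$ on $H^*(\CP^{m+n+1})$ with the $f_A$-description of $H^{\geq 2(n+1)}(A*\CP^n)$ via the compatibility $\mathrm{res}_{\CP^{m+n+1}} = f_A \circ \mathrm{res}_{\CP^m}$ on the relevant summand; this is where one must be careful with the excision and Thom-isomorphism identifications used to define $f_A$, but it follows from naturality of all the maps involved (and was already set up for $A = \CP^m$ in the proof of Proposition~\ref{prop:join}, where $t'_{\CP^m} = u^{n+1}$).
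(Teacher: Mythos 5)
Your proposal is correct and follows essentially the same route as the paper's proof: both hinge on the identity $\mathrm{res}_{A*\CP^n}(u^{n+1}\smile u^k) = f_A(\mathrm{res}_A(u^k))$, established by the special case $A=\CP^m$ (where $t'_{\CP^m}=u^{n+1}$) plus naturality, together with the structural description of $H^*(A*\CP^n)$ from Proposition~\ref{prop:join}. The only cosmetic difference is that you count directly which powers of $u$ survive restriction in the two degree ranges, whereas the paper packages the same facts as two inequalities $\ell(A*\CP^n)\le\ell(A)+n+1$ and $\ell(A*\CP^n)\ge\ell(A)+n+1$ read off the commutative square.
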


\begin{proof}
    Since $f_{\CP^m}(u^k)=u^{n+1}\smile u^k$ for
    $0\leq k\leq m$,
    we have the following commutative diagram:
    \begin{equation*}
        \xymatrixcolsep{5pc}
        \xymatrix{
            H^*(\CP^{m+n+1}) \ar[r]^-{u^{n+1}\smile\cdot} \ar[d] &
            H^{*+2(n+1)}(\CP^{m+n+1}) \ar[d] \\
            H^*(A) \ar[r]^-{f_A} & H^{*+2(n+1)}(A*\CP^n)
        }
    \end{equation*}
    where the vertical arrows are induced by inclusions.
    For the grading $*=2(\ell(A*\CP^n)-n-1)$,
    the map $u^{n+1}\smile\cdot$ is onto, so $\ell(A*\CP^n)\leq \ell(A)+n+1$.
    According to Proposition~\ref{prop:stabilization},
    the map $f_A$ is an injection
    for the grading $*=2\ell(A)$, so $\ell(A*\CP^n)\geq \ell(A)+n+1$.
\end{proof}

\section{Generating functions of $\C$-equivariant Hamiltonian diffeomorphism}
\label{se:gf}

\subsection{``Broken trajectories'' and generating functions of $\C^d$}

Let $\Phi\in\ham(\C^d)$ be a Hamiltonian diffeomorphism
which can be decomposed in $\Phi = \sigma_n\circ\cdots\circ\sigma_1$
where every $\sigma_k\in\ham(\C^d)$ is sufficiently $C^1$-close
to $\id$ such that they admit generating functions
$f_k:\C^d\to\R$ satisfying:
\begin{equation}\label{eq:egf}
    \forall z_k\in\C^d, \exists ! w_k\in\C^d,\quad
    w_k=\frac{z_k+\sigma_k(z_k)}{2} \quad \text{ and } \quad
    \nabla f_k (w_k) = i(z_k-\sigma_k(z_k)).
\end{equation}
We call such generating functions without auxiliary variable
elementary generating functions.
We will say that the $n$-tuple $\boldsymbol{\sigma}=(\sigma_1,\dotsc,\sigma_n)$
is associated to the Hamiltonian flow $(\Phi_t)$ if
there exists real numbers $0=t_0\leq t_1\leq \cdots\leq t_n = 1$
such that $\sigma_k = \Phi_{t_k}\circ\Phi_{t_{k-1}}^{-1}$.
A continuous family of such tuples $(\sigma_s)$ will denote
a family of tuples of the same size $n\geq 1$,
$\boldsymbol{\sigma}_s =: (\sigma_{1,s},\dotsc,\sigma_{n,s})$ such that
the maps $s\mapsto \sigma_{k,s}$ are $C^1$-continuous.
Every compactly supported Hamiltonian flow and every $\C$-equivariant
Hamiltonian flow $(\Phi_s)_{s\in[0,1]}$ admit a continuous family of associated tuple
$(\boldsymbol{\sigma}_s)$ that is $\boldsymbol{\sigma}_s$ is associated to $\Phi_s$
for all $s\in[0,1]$ (and the size can be taken as large as wanted).

Denote by $F_{\boldsymbol{\sigma}}$ the following function $(\C^d)^n\to\R$:
\begin{equation}
    F_{\boldsymbol{\sigma}} (v_1,\dotsc,v_n) := \sum_{k=1}^{n}
        f_k\left(\frac{v_k + v_{k+1}}{2}\right) +
    \frac{1}{2}\la v_k,iv_{k+1}\ra,
\end{equation}
with convention $v_{n+1} = v_1$.
Let $A:(\C^d)^n\to(\C^d)^n$ denotes the linear map
such that, for $\mathbf{v}=(v_1,\ldots,v_n)$,
$A(\mathbf{v})=\mathbf{w}$ with $w_k = \frac{v_k+v_{k+1}}{2}$.
Let $\psi:(\C^d)^n\to(\C^d)^n$ be
the diffeomorphism $\psi(\mathbf{z}) = \mathbf{w}$ defined by (\ref{eq:egf}).

\begin{prop}\label{prop:gf}
    Under the above hypothesis, 
    we have 
    \begin{equation*}
        \forall k, \forall \mathbf{v}\in(\C^d)^n, \quad
        \partial_{v_k} F_{\boldsymbol{\sigma}} (v_1,\dotsc,v_n) = i(z_{k} -
        \sigma_{k-1}(z_{k-1})),
    \end{equation*}
    where $\mathbf{z} := \psi^{-1}\circ A(\mathbf{v})$ and $z_{0}:=z_n$.
    Moreover, if $n$ is odd, $F_{\boldsymbol{\sigma}}$ is a generating
    function of $\Phi$ with $v_1$ as main variable.
\end{prop}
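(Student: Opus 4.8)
The plan is to compute the partial derivatives of $F_{\boldsymbol\sigma}$ directly from its defining formula, then use equation (\ref{eq:egf}) to recognize the resulting expression, and finally to identify the critical locus with the graph of $\Phi$ via the symplectomorphism $\tau$. First I would differentiate $F_{\boldsymbol\sigma}(v_1,\dotsc,v_n) = \sum_{k} f_k\!\left(\tfrac{v_k+v_{k+1}}{2}\right) + \tfrac12\la v_k, iv_{k+1}\ra$ with respect to $v_k$. The variable $v_k$ appears in exactly two of the $f$-terms: inside $f_k\!\left(\tfrac{v_k+v_{k+1}}{2}\right)$ and inside $f_{k-1}\!\left(\tfrac{v_{k-1}+v_k}{2}\right)$ (with the cyclic convention). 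Writing $w_k = \tfrac{v_k+v_{k+1}}{2}$ and using that $\nabla f_k$ is the gradient with respect to the standard inner product, the $f$-contributions give $\tfrac12\nabla f_k(w_k) + \tfrac12\nabla f_{k-1}(w_{k-1})$. For the bilinear terms, $v_k$ appears in $\tfrac12\la v_{k-1}, iv_k\ra$ and $\tfrac12\la v_k, iv_{k+1}\ra$; since $\la v, iv'\ra$ is a real bilinear form with $i$ skew-adjoint (i.e.\ $\la a, ib\ra = -\la ia, b\ra = \la b, -ia\ra$ — one must be careful with the sign convention for $\la\cdot,\cdot\ra$ here), the derivative in $v_k$ contributes $\tfrac12 iv_{k+1} - \tfrac12 iv_{k-1}$, up to checking the convention. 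Collecting everything, $\partial_{v_k}F_{\boldsymbol\sigma} = \tfrac12\big(\nabla f_k(w_k) + iv_{k+1}\big) + \tfrac12\big(\nabla f_{k-1}(w_{k-1}) - iv_{k-1}\big)$.

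Next I would invoke (\ref{eq:egf}) to rewrite this. By definition of $\psi$ and the substitution $\mathbf z = \psi^{-1}\circ A(\mathbf v)$, we have $w_k = \tfrac{z_k + \sigma_k(z_k)}{2}$ and $\nabla f_k(w_k) = i(z_k - \sigma_k(z_k))$. Also $v_{k+1} = 2w_k - v_k$ and, running the indices, one needs a relation expressing $v_k$ in terms of $z_k$ and $\sigma_{k-1}(z_{k-1})$. The key algebraic point is that the $n$-tuple condition forces $v_{k+1} = \sigma_k(z_k)$ and $v_k = z_k$ when $n$ is odd; more precisely, from $w_k = \tfrac{v_k + v_{k+1}}{2} = \tfrac{z_k + \sigma_k(z_k)}{2}$ together with the bilinear-term relations one deduces $v_{k+1} - v_k = \pm i\nabla f_k(w_k) = \pm(\text{something})$ — here the identification is exactly the content of $\tau$ being a symplectomorphism sending the diagonal to the zero-section. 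Substituting, $\tfrac12(\nabla f_k(w_k) + iv_{k+1})$ becomes $\tfrac12(i(z_k-\sigma_k(z_k)) + i\sigma_k(z_k)) = \tfrac{i}{2}z_k$ and similarly $\tfrac12(\nabla f_{k-1}(w_{k-1}) - iv_{k-1})$ becomes $-\tfrac{i}{2}\sigma_{k-1}(z_{k-1})$... this needs to come out to $i(z_k - \sigma_{k-1}(z_{k-1}))$, so I would track factors of $2$ carefully; most likely the clean statement emerges after using $v_{k+1} = \sigma_k(z_k)$ directly in the bilinear derivative rather than halving. In any case, the upshot is $\partial_{v_k}F_{\boldsymbol\sigma} = i(z_k - \sigma_{k-1}(z_{k-1}))$.

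For the "moreover" part, I would argue as follows. A critical point $\mathbf v$ of $F_{\boldsymbol\sigma}$ is one where all $\partial_{v_k}F_{\boldsymbol\sigma} = 0$, i.e.\ $z_k = \sigma_{k-1}(z_{k-1})$ for all $k$ (cyclically, with $z_0 = z_n$). This is precisely the "broken trajectory" condition: $z_2 = \sigma_1(z_1)$, $z_3 = \sigma_2(z_2)$, \dots, $z_1 = \sigma_n(z_n)$, which says $z_1$ is a fixed point of $\Phi = \sigma_n\circ\cdots\circ\sigma_1$ and the other $z_k$ are the intermediate images. Then, viewing $v_1$ as the main variable, one checks that for fixed $v_1$ the remaining variables $v_2,\dotsc,v_n$ are determined by a submersion condition: the partial fiber derivative $\partial_{(v_2,\dotsc,v_n)}F_{\boldsymbol\sigma}$ having $0$ as a regular value follows because $\psi$ and $A$ are diffeomorphisms (the linear map $A$ is invertible precisely when $n$ is odd — this is where oddness enters, as $A$ has matrix $\tfrac12(I + \text{cyclic shift})$ whose determinant involves $1 + (-1)^{n+1}$, nonzero iff $n$ odd), so the equations $\partial_{v_k}F_{\boldsymbol\sigma} = 0$ for $k\neq 1$ can be solved smoothly and transversally for $v_2,\dotsc,v_n$ in terms of $v_1$. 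Finally, at such a critical point of the fiber, $\partial_{v_1}F_{\boldsymbol\sigma} = i(z_1 - \sigma_n(z_n)) = i(z_1 - \Phi(z_1))$ while $w := \tfrac{v_1+v_2}{2}$ relates to $\tfrac{z_1 + \Phi(z_1)}{2}$ after unwinding, so the induced immersion $(q,\partial_q F) $ parametrizes exactly $\tau(\{(z,\Phi(z))\}) = L_\Phi$. The main obstacle I anticipate is bookkeeping: getting the factors of $\tfrac12$, the sign in $\la\cdot, i\cdot\ra$, and the cyclic index shifts all consistent, and cleanly isolating why $n$ odd is exactly the invertibility condition on $A$ — everything else is routine once the derivative formula is in hand.
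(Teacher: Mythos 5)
Your derivative computation starts correctly: $\partial_{v_k}F_{\boldsymbol\sigma} = \tfrac12\bigl(\nabla f_{k-1}(w_{k-1})+\nabla f_k(w_k)\bigr) + \tfrac{i}{2}(v_{k+1}-v_{k-1})$. The gap is in what you do next. You try to substitute $v_{k+1} = \sigma_k(z_k)$ and $v_k = z_k$, but these identities are \emph{false} for a general $\mathbf{v}\in(\C^d)^n$ — they only hold along the critical locus (where the whole derivative is $0$ anyway), so this substitution cannot prove the formula at an arbitrary point. Your own complaint that the "factors of $2$" refuse to cooperate is a symptom of this wrong turn. The correct move, which the paper makes, never expresses $v_k$ in terms of $z_k$: instead observe that $\tfrac{i}{2}(v_{k+1}-v_{k-1}) = i(w_k - w_{k-1})$ since $w_k-w_{k-1} = \tfrac{v_{k+1}-v_{k-1}}{2}$. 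Then substitute $w_k = \tfrac{z_k+\sigma_k(z_k)}{2}$ and $\nabla f_k(w_k) = i(z_k-\sigma_k(z_k))$ into $\tfrac12\bigl(\nabla f_{k-1}(w_{k-1})+\nabla f_k(w_k)\bigr) + i(w_k-w_{k-1})$: the $\sigma_k(z_k)$ and $z_{k-1}$ terms cancel, leaving $i(z_k-\sigma_{k-1}(z_{k-1}))$ cleanly with no dangling factors.

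For the "moreover" part you have the right ingredients (oddness of $n$ makes $A$ invertible, and $\psi$ is a diffeomorphism), but two pieces need to be actually carried out rather than asserted. First, the transversality of $\partial_\xi F$ to $0$ should be verified concretely: the paper does this by writing out $\ud(\partial_\xi F)\cdot A^{-1}\cdot \ud\psi$ in $z$-coordinates and observing it is block upper-triangular with invertible diagonal blocks $-\ud\sigma_k(z_k)$. Your statement that the equations "can be solved smoothly and transversally" is exactly what must be proved, not a consequence of $A$, $\psi$ being invertible. Second, to identify the immersed Lagrangian as $L_\Phi$ you need the telescoping identity $v_1 = \sum_k(-1)^{k+1}w_k$ (valid since $n$ is odd) which on $\Sigma_F$ collapses to $v_1 = \tfrac{z_1+\Phi(z_1)}{2}$; combined with $\partial_{v_1}F = i(z_1-\Phi(z_1))$ this gives $(v_1,\partial_{v_1}F) = \tau(z_1,\Phi(z_1))$. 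Your phrasing in terms of $w := \tfrac{v_1+v_2}{2}$ is a red herring — it is $v_1$ itself, not $w$, that must equal the first $\tau$-coordinate.
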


\begin{proof}
    Let $F:=F_{\boldsymbol{\sigma}}$.
    Given any $n$-tuple $\mathbf{v}\in(\C^d)^n$,
    we associate $n$-tuples $\mathbf{w}$ and
    $\mathbf{z}$ in $(\C^d)^n$
    given by $\mathbf{w}=A(\mathbf{v})$ and $\psi(\mathbf{z}) = \mathbf{w}$.
    Then
    \begin{eqnarray*}
        \partial_{v_k} F(\mathbf{v}) &=&
        \frac{1}{2}\left( \nabla f_{k-1}\left(\frac{v_{k-1}+v_k}{2}\right)+
        \nabla f_k \left(\frac{v_k + v_{k+1}}{2}\right)
    + i(v_{k+1}-v_{k-1}) \right)\\
                            &=& \frac{1}{2}\left(\nabla f_{k-1}(w_{k-1}) + \nabla f_k (w_k)
                            \right) + i(w_k - w_{k-1})\\
                            &=& i(z_k - \sigma_{k-1}(z_{k-1})).
    \end{eqnarray*}
    where indices are seen in $\Z/n\Z$.
    Now suppose $n$ is odd, so that $A$ is an isomorphism.
    If we denote by $\xi:=(v_2,\dotsc,v_n)$ the auxiliary variables,
    we thus have $\partial_\xi F(\mathbf{v}) = 0$ if and only if
    $z_{k+1} = \sigma_k(z_k)$ for $1\leq k\leq n-1$.
    Moreover, since $v_1 = \sum_k (-1)^{k+1} w_k$,
    if $\partial_\xi F(\mathbf{v}) = 0$ then
    \begin{equation*}
        v_1 = \sum_{k=1}^n (-1)^{k+1}\frac{z_k +\sigma_k(z_k)}{2}
    = \frac{z_1 + \sigma_n(z_n)}{2},
    \end{equation*}
    as required (since $\sigma_n(z_n) = \Phi(z_1)$ recursively).

    Finally we must show that $\partial_\xi F$ is transverse to $0$.
    This is clear in the $z$-coordinates: the matrix
    \begin{equation*}
        \ud(\partial_\xi F)(\mathbf{v})\cdot A^{-1}\cdot \ud\psi(\mathbf{z}) =
        i
        \begin{bmatrix}
            -\ud\sigma_1(z_1) & I_{2d} \\
             & -\ud\sigma_2(z_2) & I_{2d} \\
             &  & \ddots & \ddots &  \\
             &  &  & -\ud\sigma_n(z_n) & I_{2d}
        \end{bmatrix}
    \end{equation*}
    is invertible.
\end{proof}

This proposition provides a quantitative way to see how close a discrete trajectory
$(z_1,\dotsc,z_n)$ given by $(v_1,\dotsc,v_n)$ is to a discrete trajectory of the
dynamics $\sigma_n\circ\cdots\circ\sigma_1$.

If $\boldsymbol{\sigma}=(\sigma_1,\ldots,\sigma_n)$ and
$\boldsymbol{\delta}=(\delta_1,\ldots,\delta_m)$, we write
$(\boldsymbol{\sigma},\boldsymbol{\delta}) =
(\sigma_1,\ldots,\sigma_n,\delta_1,\ldots,\delta_m)$.
We have the following decomposition formula:
\begin{equation}\label{eq:composition}
    \forall v_1,\dotsc ,v_{n+m}\in\C^d,\quad
    F_{(\boldsymbol{\sigma},\boldsymbol{\delta})}(\mathbf{v}) =
    F_{(\boldsymbol{\sigma},\id)}(v_1,\dotsc,v_{n+1}) +
    F_{(\boldsymbol{\delta},\id)}(v_{n+1},\dotsc,v_{n+m},v_1).
\end{equation}

The following proposition will be of special interest for us.

\begin{prop}\label{prop:stabilization}
    Let $\boldsymbol{\sigma}$ be $m$-tuple with $m$ even and
    $\boldsymbol{\delta} := (U_1,\dotsc,U_n)$ be a $n$-tuple of unitary maps
    with $n$ odd.  Assume $U_n\circ\cdots\circ U_1 = \id$.  Then the generating
    function $F_{(\boldsymbol{\sigma},\boldsymbol{\delta})}$ is equivalent to
    $F_{(\boldsymbol{\sigma},\id)}$.  More precisely, writing
    $\mathbf{v}^1:=(v_1,\ldots,v_{n+1})$ and
    $\mathbf{v}^2:=(v_{n+2},\ldots,v_{m+n})$ there exists a $\C$-linear
    isomorphism $A:(\C^d)^{m+n}\to(\C^d)^{m+n}$ of the form
    $A(\mathbf{v}^1,\mathbf{v}^2) = (\mathbf{v}^1,\mathbf{v}^2 -
    A'(\mathbf{v}^1))$ and a non-degenerated quadratic form $Q$ of
    $(\C^d)^{n-1}$ such that
    \begin{equation*}
        F_{(\boldsymbol{\sigma},\boldsymbol{\delta})}\circ A(\mathbf{v}^1,\mathbf{v}^2) =
        F_{(\boldsymbol{\sigma},\id)}(\mathbf{v}^1)
        + Q(\mathbf{v}^2),
    \end{equation*}
    with $\ind(Q) = \ind(F_{(\boldsymbol{\delta},\id)}) = 
    \ind(F_{\boldsymbol{\delta}})$.
    In fact 
    \begin{equation*}
        Q(\mathbf{v}^2) := F_{\boldsymbol{\delta}}(0,\mathbf{v}^2).
    \end{equation*}
\end{prop}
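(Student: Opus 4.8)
The plan is to strip off the $\boldsymbol{\delta}$-variables by means of the decomposition formula~(\ref{eq:composition}) and to reduce the statement to the following fact about $F_{(\boldsymbol{\delta},\id)}$: for $(y_1,y_{n+1})$ fixed, the quadratic form $\eta\mapsto F_{(\boldsymbol{\delta},\id)}(y_1,\eta,y_{n+1})$ (where $\eta:=(y_2,\dots,y_n)$) is a fixed non-degenerate form translated by a $\C$-linear function of $(y_1,y_{n+1})$, and its critical value along the fibre is $0$. The vanishing of that critical value, which uses that each $U_k$ is unitary and $U_n\circ\cdots\circ U_1=\id$, is the heart of the argument.

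First I would apply~(\ref{eq:composition}) to $(\boldsymbol{\sigma},\boldsymbol{\delta})$ (with $\boldsymbol{\sigma}$ of length $m$, $\boldsymbol{\delta}$ of length $n$): setting $y_1:=v_{m+1}$, $y_j:=v_{m+j}$ for $2\le j\le n$, $y_{n+1}:=v_1$ and $\eta:=\mathbf{v}^2$, it reads
\[
    F_{(\boldsymbol{\sigma},\boldsymbol{\delta})}(v_1,\dots,v_{m+n})=F_{(\boldsymbol{\sigma},\id)}(\mathbf{v}^1)+F_{(\boldsymbol{\delta},\id)}(y_1,\eta,y_{n+1}).
\]
Since $F_{(\boldsymbol{\sigma},\id)}$ is independent of $\eta$, once I know that $F_{(\boldsymbol{\delta},\id)}(y_1,\eta,y_{n+1})=Q(\eta-\bar\eta(y_1,y_{n+1}))$ with $\bar\eta$ $\C$-linear, $\bar\eta(0,0)=0$ and $Q(\eta)=F_{\boldsymbol{\delta}}(0,\eta)$, the translation $A(\mathbf{v}^1,\mathbf{v}^2):=(\mathbf{v}^1,\mathbf{v}^2+\bar\eta(v_{m+1},v_1))$ turns $F_{(\boldsymbol{\sigma},\boldsymbol{\delta})}\circ A$ into $F_{(\boldsymbol{\sigma},\id)}(\mathbf{v}^1)+Q(\mathbf{v}^2)$, as wanted (so $A'=-\bar\eta(v_{m+1},\,\cdot\,,v_1)$ in the notation of the statement).

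To locate the critical point I would use that the computation in the proof of Proposition~\ref{prop:gf} involves only the partial derivatives $\partial_{y_k}$ for $2\le k\le n$, so it applies verbatim to $F_{(\boldsymbol{\delta},\id)}$ even though the tuple $(\boldsymbol{\delta},\id)$ has even length: it gives $\partial_{y_k}F_{(\boldsymbol{\delta},\id)}=i(z_k-U_{k-1}z_{k-1})$ for $2\le k\le n$, with $z_j:=(I+U_j)^{-1}(y_j+y_{j+1})$ (the $I+U_j$ being invertible because the $U_k$ admit elementary generating functions). Thus the unique $\eta$-critical point satisfies $z_k=V_{k-1}z_1$ where $V_k:=U_k\circ\cdots\circ U_1$ (so $V_0=V_n=\id$); telescoping the identities $y_j+y_{j+1}=(I+U_j)V_{j-1}z_1$ gives $z_1=\tfrac12(y_1+y_{n+1})=:s$ and, with $r:=\tfrac12(y_1-y_{n+1})$, the critical coordinates $\bar y_j=V_{j-1}s+(-1)^{j-1}r$, $1\le j\le n+1$. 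Hence $\bar\eta$ is $\C$-linear in $(y_1,y_{n+1})$ and vanishes at $(0,0)$. The $\eta$-Hessian of $F_{(\boldsymbol{\delta},\id)}$ is constant and, by the identity $F_{(\boldsymbol{\delta},\id)}(y_1,\eta,y_1)=F_{\boldsymbol{\delta}}(y_1,\eta)$, coincides with that of $F_{\boldsymbol{\delta}}$; since $F_{\boldsymbol{\delta}}$ is a quadratic generating function of the $0$-section (Proposition~\ref{prop:gf}, $n$ odd), Lemma~\ref{lem:quad0} shows this Hessian is invertible, that $F_{\boldsymbol{\delta}}$ is congruent to $0\oplus Q$ with $Q(\eta)=F_{\boldsymbol{\delta}}(0,\eta)$, and therefore that $\ind F_{\boldsymbol{\delta}}=\ind Q$.

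The key step --- and the one I expect to require genuine computation --- is that $F_{(\boldsymbol{\delta},\id)}$ vanishes at this critical point. A short computation using that $U_k$ is unitary gives $g_k\!\left(\tfrac12(I+U_k)z\right)=\tfrac12\la iz,U_kz\ra$, so at the critical point (where $z_k=V_{k-1}s$ and $U_kz_k=V_ks$) the generating-function terms of $F_{(\boldsymbol{\delta},\id)}$ contribute $\tfrac12\sum_{k=1}^n\la iV_{k-1}s,V_ks\ra$. Plugging $\bar y_j=V_{j-1}s+(-1)^{j-1}r$ into the bilinear part $\tfrac12\sum_{k=1}^{n+1}\la y_k,iy_{k+1}\ra$ (cyclic, $y_{n+2}:=y_1$), the terms involving $r$ assemble into an alternating sum that collapses to $-\la s,ir\ra+\la s,ir\ra=0$ (here one uses $V_0=V_{n+1}=\id$ and that $n+1$ is even), and the remaining terms give $\tfrac12\sum_{k=1}^n\la V_{k-1}s,iV_ks\ra$; as $\la a,ib\ra=-\la ia,b\ra$, this is the opposite of the generating-function contribution, whence $F_{(\boldsymbol{\delta},\id)}(y_1,\bar\eta,y_{n+1})=0$. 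Completing the square in $\eta$ then yields $F_{(\boldsymbol{\delta},\id)}(y_1,\eta,y_{n+1})=Q(\eta-\bar\eta(y_1,y_{n+1}))$ with $Q(\eta)=F_{(\boldsymbol{\delta},\id)}(0,\eta,0)$; comparing the defining sums term by term identifies the latter with $F_{\boldsymbol{\delta}}(0,\eta)$, and $F_{(\boldsymbol{\delta},\id)}$ is congruent to $0\oplus Q\oplus 0$, so $\ind F_{(\boldsymbol{\delta},\id)}=\ind Q$. With the reduction of the first paragraphs this finishes the proof; apart from the cancellation just described, everything is bookkeeping with~(\ref{eq:composition}) and Lemma~\ref{lem:quad0}.
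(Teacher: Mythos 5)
Your proof is correct, and it is essentially the same route as the paper's, just packaged differently. The paper factors the argument through Lemma~\ref{lem:quadidentity}, whose proof shows that $F_{(\boldsymbol{\delta},\id)}$ is a quadratic generating function of the $0$-section (with main variable $(v_1,v_{n+1})$ and fiber variable $\mathbf{v}'$) and then applies Lemma~\ref{lem:quad0} as a black box; the latter simultaneously hands over the fibered linear change of variables, the non-degeneracy of the fiber Hessian $c$, \emph{and} the vanishing of the residual $a-bc^{-1}{}^{t}b$ in the main variable. You instead perform that last step by hand: you locate the fiber-critical point $\bar\eta(y_1,y_{n+1})$ explicitly via the $V_k$'s, observe the Hessian is that of $F_{\boldsymbol{\delta}}$ (hence invertible by Lemma~\ref{lem:quad0}), and then compute directly that the critical value vanishes, using unitarity (the identity $g_k\bigl(\tfrac12(I+U_k)z\bigr)=\tfrac12\langle iz,U_kz\rangle$), the telescoping of the cross terms in $r$, and $V_n=\id$. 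That computation is exactly what Lemma~\ref{lem:quad0} encapsulates once the $0$-section property is verified, so your argument is a correct unrolling of the paper's proof rather than a different one; it is slightly longer but makes the role of $U_n\circ\cdots\circ U_1=\id$ more visible. One remark worth keeping in mind: the fact that the fiber critical \emph{value} is zero is equivalent to the $0$-section condition $\partial_{(v_1,v_{n+1})}F=0$ on the fiber critical locus, so either verification suffices — you chose the value, the paper chose the derivative.
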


In order to prove it, we will need the following lemma.

\begin{lem}\label{lem:quadidentity}
    Let $n\in\N$ be odd.
    Let $U_1,\dotsc,U_n\in U(\C^d)$ be unitary maps
    generated by elementary quadratic generating functions
    and such that $U_n\circ\cdots\circ U_1 = \id$.
    Let $\boldsymbol{\delta} := (U_1,\dotsc,U_n)$.
    Then, writing $\mathbf{v}':=(v_2,\ldots,v_n)$ there exists a $\C$-linear isomorphism
    $B : (\C^d)^{n+1}\to(\C^d)^{n+1}$ of the form
    $B(v_1,\mathbf{v}',v_{n+1}) = (v_1,\mathbf{v}'+B'(v_1,v_{n+1}),v_{n+1})$ and
    a non-degenerated quadratic form $Q : (\C^d)^{n-1}\to\R$
    such that
    \begin{equation*}
        \forall v_1,\dotsc v_{n+1}\in\C^d,\quad
        F_{(\boldsymbol{\delta},\id)}\circ B (v_1,\mathbf{v}',v_{n+1}) =
        Q(\mathbf{v}'),
    \end{equation*}
    with $\ind(Q) = \ind(F_{(\boldsymbol{\delta},\id)}) =
    \ind(F_{\boldsymbol{\delta}})$.
    In fact $Q(\mathbf{v}') := F_{\boldsymbol{\delta}}(0,\mathbf{v}')$.
\end{lem}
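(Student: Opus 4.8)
The plan is to compute $F_{\boldsymbol{\delta}}$ explicitly enough to see its block structure, then apply the quadratic normal form result of Lemma~\ref{lem:quad0} to eliminate the ``boundary'' variables $v_1$ and $v_{n+1}$. First I would write out the definition: since $\boldsymbol{\delta}=(U_1,\dotsc,U_n)$ with each $U_k$ unitary and generated by an elementary quadratic generating function $q_k$, the function $F_{(\boldsymbol{\delta},\id)}$ on $(\C^d)^{n+1}$ is a quadratic form in $(v_1,\dotsc,v_{n+1})$, and by Proposition~\ref{prop:gf} (applied with the $(n{+}1)$-tuple $(\boldsymbol{\delta},\id)$, which has even length, so one uses the partial-derivative formula rather than the ``generating function'' conclusion) its differential in $v_k$ is $i(z_k-U_{k-1}(z_{k-1}))$ for the associated discrete trajectory, with $z_0=z_{n+1}$ and the last map being $\id$. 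The key structural fact is that $F_{(\boldsymbol{\delta},\id)}$ depends on $v_1$ and $v_{n+1}$ only through the ``middle'' variables in a fibered-affine way: setting $\partial_{v_k}F = 0$ for $2\le k\le n$ determines $v_2,\dotsc,v_n$ as a $\C$-linear (in fact affine-linear, homogeneous here) function of $(v_1,v_{n+1})$, precisely because the relevant submatrix — the one appearing in the transversality computation at the end of Proposition~\ref{prop:gf}, with the $-\ud U_k$ on the diagonal and $I_{2d}$ above — is invertible.

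Next I would make this explicit as a change of variables. Define $B'(v_1,v_{n+1})$ to be the unique solution $(v_2,\dotsc,v_n)$ of the linear system $\partial_{v_k}F_{(\boldsymbol{\delta},\id)} = 0$, $2\le k\le n$, as a function of $v_1,v_{n+1}$; this is $\C$-linear since everything in sight is $\C$-linear (the $U_k$ are unitary, hence $\C$-linear, and $A$, $\psi$ are $\C$-linear in the quadratic setting). Then $B(v_1,\mathbf v',v_{n+1}) := (v_1,\mathbf v'+B'(v_1,v_{n+1}),v_{n+1})$ is the desired fibered $\C$-linear isomorphism. By construction, after this change of variables the partial derivatives of $F_{(\boldsymbol{\delta},\id)}\circ B$ with respect to $v_2,\dotsc,v_n$ vanish identically — i.e.\ completing the square in the middle block — so $F_{(\boldsymbol{\delta},\id)}\circ B$ splits as a quadratic form $Q(\mathbf v')$ in the middle variables plus a quadratic form in $(v_1,v_{n+1})$ alone. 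Here is where I invoke the hypothesis $U_n\circ\cdots\circ U_1 = \id$: it forces the $(v_1,v_{n+1})$-part to vanish. Concretely, on the locus $\partial_\xi F_{\boldsymbol{\delta}} = 0$ (all middle derivatives zero) one has $z_k = U_{k-1}\circ\cdots\circ U_1(z_1)$ and, chasing through as in Proposition~\ref{prop:gf}, $\partial_{v_1}F$ and $\partial_{v_{n+1}}F$ both end up proportional to $z_1 - U_n\circ\cdots\circ U_1(z_1) = 0$; so $F_{(\boldsymbol{\delta},\id)}\circ B$ has no $(v_1,v_{n+1})$-dependence at all, giving $F_{(\boldsymbol{\delta},\id)}\circ B(v_1,\mathbf v',v_{n+1}) = Q(\mathbf v')$. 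Setting $v_1 = v_{n+1} = 0$ then identifies $Q(\mathbf v') = F_{\boldsymbol{\delta}}(0,\mathbf v',0)$; and since $\boldsymbol\delta$ has odd length $n$, $F_{\boldsymbol\delta}$ is a generating function of $U_n\circ\cdots\circ U_1 = \id$ by Proposition~\ref{prop:gf}, in fact of the $0$-section after discarding the main variable, so $Q$ is nondegenerate and $F_{\boldsymbol\delta}(0,\mathbf v') = F_{\boldsymbol\delta}(0,\mathbf v)|_{v_1 = 0}$ in the paper's notation.

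Finally, the index bookkeeping: $B$ is a fibered isomorphism, so $\ind(F_{(\boldsymbol\delta,\id)}\circ B) = \ind(F_{(\boldsymbol\delta,\id)})$; since the split-off block is trivial, this equals $\ind(Q)$. And $\ind(F_{(\boldsymbol\delta,\id)}) = \ind(F_{\boldsymbol\delta})$ because passing from $\boldsymbol\delta$ (odd length, a genuine generating function with main variable $v_1$) to $(\boldsymbol\delta,\id)$ (even length) amounts to adding one elementary quadratic generating function for $\id$ with zero Hessian on the new variable — more carefully, $F_{(\boldsymbol\delta,\id)}(v_1,\dotsc,v_{n+1}) = F_{\boldsymbol\delta}(v_1,\dotsc,v_n) + (\text{terms})$ and the $v_{n+1}$-block is again killed by a fibered change of variables using invertibility of the relevant submatrix, not changing the index. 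The main obstacle I anticipate is the careful indexing in the block-matrix algebra — verifying that the submatrix governing $v_2,\dotsc,v_n$ (with $v_1,v_{n+1}$ as parameters) is exactly the invertible one from Proposition~\ref{prop:gf}'s transversality argument, and tracking which unitary maps appear where once $\id$ is appended — together with making the cancellation of the $(v_1,v_{n+1})$-block from $U_n\circ\cdots\circ U_1 = \id$ fully rigorous rather than merely ``on the critical locus.'' The cleanest route for the latter is to observe that $F_{(\boldsymbol\delta,\id)}\circ B$, being quadratic, is determined by its Hessian, whose mixed $(v_1,v_{n+1})\times(\text{middle})$ and pure $(v_1,v_{n+1})$ blocks are computable directly from the affine relation defining $B'$; the $U_n\circ\cdots\circ U_1 = \id$ condition makes the pure block vanish.
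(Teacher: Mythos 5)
Your overall strategy coincides with the paper's: complete the square in the middle variables $v_2,\dotsc,v_n$ via a fibered $\C$-linear change of coordinates, use $U_n\circ\cdots\circ U_1=\id$ to kill the residual $(v_1,v_{n+1})$-block, identify $Q$ by setting the boundary variables to zero, and handle the indices by the fibered-isomorphism observation. But there is one genuine gap at the hinge of the argument: the invertibility of the middle $(n-1)\times(n-1)$ Hessian block, which is what makes $B'$ well-defined. You assert this ``precisely because the relevant submatrix --- the one appearing in the transversality computation at the end of Proposition~\ref{prop:gf} --- is invertible,'' but that matrix is a different object on two counts. First, it is the Jacobian of $\partial_\xi F$ with respect to \emph{all} variables (a wide rectangular matrix whose full rank is what transversality gives), not the square sub-block of the $v$-coordinate Hessian $\partial^2_{\xi\xi}F$ that you need invertible. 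Second, it is written in $z$-coordinates, and the linear change of coordinates $\mathbf{v}\mapsto\mathbf{z}=\psi^{-1}\circ A(\mathbf{v})$ does \emph{not} respect the splitting $(v_1,v_{n+1})\,/\,(v_2,\dotsc,v_n)$ --- so the $(2..n)\times(2..n)$ block of the $z$-coordinate matrix (which is indeed triangular, hence invertible) is not the block $c$ of the Hessian you are trying to invert. You flag this as an ``anticipated obstacle,'' but it is not merely bookkeeping: without the invertibility of $c$ the fibered map $B$ is not even defined.

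The paper closes exactly this gap by first showing that $F_{(\boldsymbol{\delta},\id)}$ is a quadratic generating function of the $0$-section of $T^*\C^{2d}$ with main variable $(v_1,v_{n+1})$ and auxiliary $\mathbf{v}'$ --- the computations are the ones you already sketch: $\partial_\xi F=0$ forces $z_n=U_n^{-1}z_1$ and (using the constraint $\sum(-1)^kw_k=0$ coming from the even length $n+1$) $z_{n+1}=z_1$, whence $\partial_x F=0$ on $\Sigma_F$, and transversality of $\partial_\xi F$ is checked in $z$-coordinates. With that structural fact in hand, Lemma~\ref{lem:quad0} delivers both the invertibility of $c$ and the explicit normal form $F\circ B(v_1,\mathbf{v}',v_{n+1})=F(0,\mathbf{v}',0)$ as a package. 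Your proof would become correct, and essentially identical to the paper's, if you replaced the direct appeal to the Proposition~\ref{prop:gf} transversality matrix by this route: establish that $F_{(\boldsymbol{\delta},\id)}$ generates the $0$-section (you already have all the ingredients), then cite Lemma~\ref{lem:quad0} rather than re-deriving the completion of the square from scratch. The same remark applies to the last step of the index bookkeeping: the equality $\ind(F_{(\boldsymbol{\delta},\id)})=\ind(F_{\boldsymbol{\delta}})$ is cleanest, as the paper does it, by applying Lemma~\ref{lem:quad0} a second time to $F_{\boldsymbol{\delta}}$ viewed as a generating function of the $0$-section of $T^*\C^d$ with $v_1$ as main variable.
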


\begin{proof}
    We will show that $F:=F_{(\boldsymbol{\delta},\id)}$ is a generating function
    of the $0$-section of $(\C^d)^2$ with main variable
    $x:=(v_1,v_{n+1})$ and auxiliary variable
    $\xi := \mathbf{v}'$.
    According to Proposition~\ref{prop:gf},
    $\partial_\xi F(\mathbf{v}) = 0$ implies
    $z_{k+1} = U_k z_k$ for $1\leq k\leq n-1$
    so that $z_n = U_{n-1}\cdots U_1 z_1 = U_n^{-1}z_1$.
    Since $n+1$ is even,
    one has
    \begin{equation*}
        \im(A) =\left\{ \mathbf{w}\in(\C^d)^{n+1}\ |\  \sum_k (-1)^k
    w_k = 0 \right\}
    \end{equation*}
    so that if $\partial_\xi F(\mathbf{v}) = 0$ then
    \begin{equation*}
        \sum_{k=1}^{n+1} (-1)^k\frac{z_k + U_k z_k}{2}
        = z_{n+1} - z_1 = 0.
    \end{equation*}
    Thus, for $\mathbf{v}\in(\C^d)^{n+1}$ such that $\partial_\xi
    F(\mathbf{v})=0$,
    according to Proposition~\ref{prop:gf},
    \begin{equation*}
        \partial_x F(v) =
        \begin{pmatrix}
            i(z_1 - z_{n+1}) \\
            i(z_{n+1} - U_n z_n)
        \end{pmatrix}
        = 0,
    \end{equation*}
    since we have seen that $U_n z_n = z_1$.
    We see that $\partial_\xi F$ is transverse to $0$
    easily in the $z$-coordinates.

    The lemma is now a direct application of Lemma~\ref{lem:quad0}
    In our case, it gives
    \begin{equation*}
        F_{(\boldsymbol{\delta},\id)}(v_1,\mathbf{v}'+B'(v_1,v_{n+1}),v_{n+1}) = Q(\mathbf{v}')
    \end{equation*}
    where
    \begin{equation*}
        Q(\mathbf{v}') = F_{(\boldsymbol{\delta},\id)}(0,\mathbf{v}',0)
        = F_{\boldsymbol{\delta}} (0,\mathbf{v}').
    \end{equation*}
    Since the map $B(\mathbf{v}):=(v_1,\mathbf{v}'+B'(v_1,v_{n+1}),v_{n+1})$ is
    a linear isomorphism,
    $\ind(Q) = \ind (F_{(\boldsymbol{\delta},\id)})$.
    In fact $\ind (Q) = \ind(F_{\boldsymbol{\delta}})$, 
    as can be seen by applying Lemma~\ref{lem:quad0}, this time
    to the quadratic generating function of the 0-section
    $F_{\boldsymbol{\delta}}$ with $v_1$ as main variable.
\end{proof}

\begin{proof}[Proof of Proposition~\ref{prop:stabilization}]
    This is a direct application of Lemma~\ref{lem:quadidentity}
    to the function $F_{(\boldsymbol{\delta},\id)}$
    together with the decomposition formula (\ref{eq:composition}).
\end{proof}

The following lemma will be useful to relate critical points
of Hamiltonian diffeomorphisms with a common factor.

\begin{lem}\label{lem:commonfactor}
    Let $\boldsymbol{\sigma}$, $\boldsymbol{\delta}$ and
    $\boldsymbol{\delta}'$ be respectively an $m$-tuple
    and two $n$-tuples of small Hamiltonians diffeomorphism of $\C^d$
    as above. Let $\psi$ and $\psi'$ be the diffeomorphisms $\mathbf{z}\mapsto \mathbf{w}$
    of $(\C^d)^{m+n+1}$ defined by (\ref{eq:egf}) for
    the tuples $(\boldsymbol{\sigma},\boldsymbol{\delta},\id)$ and
    $(\boldsymbol{\sigma},\boldsymbol{\delta}',\id)$ respectively.
    Let $A:\mathbf{v}\mapsto\mathbf{w}$ be the linear map of $(\C^d)^{m+n+1}$
    defined as above.
    Then for all $\mathbf{z}^1\in(\C^d)^m$, $\mathbf{z}^2,\mathbf{z}^3\in(\C^d)^n$
    and $z_{m+n+1} \in\C^d$,
    we have $(v_1,\ldots,v_m)=(v_1',\ldots,v_m')$ where
    $\psi(\mathbf{z}^1,\mathbf{z}^2,z_{n+m+1}) = A(\mathbf{v})$
    and $\psi'(\mathbf{z}^1,\mathbf{z}^3,z_{n+m+1}) = A(\mathbf{v}')$.
\end{lem}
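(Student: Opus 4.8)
The plan is to unwind the definitions of the maps $\psi$, $\psi'$ and $A$ and observe that the first $m$ components of the equation $\psi(\mathbf{z}^1,\mathbf{z}^2,z_{n+m+1}) = A(\mathbf{v})$ only involve the first $m$ entries of $\mathbf{z}^1$, the Hamiltonian diffeomorphisms $\sigma_1,\dotsc,\sigma_m$, and the first few $v_k$'s, so they are literally the same equations as the first $m$ components of $\psi'(\mathbf{z}^1,\mathbf{z}^3,z_{n+m+1}) = A(\mathbf{v}')$. First I would recall that, by (\ref{eq:egf}) applied to the tuple $(\boldsymbol{\sigma},\boldsymbol{\delta},\id) = (\sigma_1,\dotsc,\sigma_m,\delta_1,\dotsc,\delta_n,\id)$, the map $\psi$ sends $\mathbf{z} = (z_1,\dotsc,z_{m+n+1})$ to $\mathbf{w} = (w_1,\dotsc,w_{m+n+1})$ where, for $1\leq k\leq m$, $w_k = \frac{z_k + \sigma_k(z_k)}{2}$ — in particular $w_k$ depends only on $z_k$ and $\sigma_k$, and this is identical for $\psi'$ since $\boldsymbol{\sigma}$ is the common factor. (For $m < k \leq m+n$ the defining diffeomorphism is $\delta_{k-m}$ resp. $\delta'_{k-m}$, and for $k = m+n+1$ it is $\id$, but we will not need these.)

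Next I would examine $A$: recall $A(\mathbf{v}) = \mathbf{w}$ with $w_k = \frac{v_k + v_{k+1}}{2}$ for $1 \leq k \leq m+n+1$ and cyclic convention $v_{m+n+2} = v_1$. The key structural observation is that for a tuple of odd length $m+n+1$ the map $A$ is invertible with $v_1 = \sum_{k=1}^{m+n+1}(-1)^{k+1} w_k$, and then the remaining $v_k$ are determined recursively by $v_{k+1} = 2w_k - v_k$. Thus $v_1$ depends on \emph{all} of $\mathbf{w}$, but for each fixed $j$, $v_{j}$ is a fixed linear combination of $v_1$ and $w_1,\dotsc,w_{j-1}$. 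Combining this with the previous paragraph: for $1 \leq j \leq m$, the entry $v_j$ of $\mathbf{v}$ is determined by $v_1$ and by $w_1,\dotsc,w_{j-1}$, each of which equals $\frac{z_k + \sigma_k(z_k)}{2}$ for $k < j \leq m$ — a quantity shared between the two setups. So it suffices to show $v_1 = v_1'$.

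The remaining point — which I expect to be the only mildly delicate step — is the equality $v_1 = v_1'$, and here the cyclic telescoping from the proof of Proposition~\ref{prop:gf} does the job: since $\psi(\mathbf{z}) = \mathbf{w} = A(\mathbf{v})$, we have $v_1 = \sum_{k=1}^{m+n+1}(-1)^{k+1} w_k = \sum_{k=1}^{m+n+1}(-1)^{k+1}\frac{z_k + \widehat{\sigma}_k(z_k)}{2}$ where $\widehat{\sigma}_k$ denotes $\sigma_k$ for $k\leq m$, $\delta_{k-m}$ for $m<k\leq m+n$, and $\id$ for $k = m+n+1$; using $z_{k+1} = \widehat{\sigma}_k(z_k)$ only for $1 \leq k \leq m+n$ (i.e.\ for all indices \emph{except} the last, where $\widehat{\sigma}_{m+n+1}=\id$ so $z_{m+n+1}$ is a free variable) this alternating sum telescopes to $\frac{z_1 + z_{m+n+1}}{2}$. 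Crucially the hypothesis is that we feed in the \emph{same} $\mathbf{z}^1 \in (\C^d)^m$ and the same $z_{m+n+1}\in\C^d$ to both $\psi$ and $\psi'$, and the telescoped value $\frac{z_1 + z_{m+n+1}}{2}$ depends only on $z_1$ (the first entry of $\mathbf{z}^1$) and $z_{m+n+1}$; hence $v_1 = \frac{z_1 + z_{m+n+1}}{2} = v_1'$, and by the preceding paragraph $v_j = v_j'$ for all $1 \leq j \leq m$. One subtlety to address carefully is that the telescoping must be carried out in the $\mathbf{z}$-coordinates \emph{after} imposing $\mathbf{w} = \psi(\mathbf{z})$, which is exactly the setting of Lemma~\ref{lem:commonfactor}; since $\mathbf{z}$ and $z_{m+n+1}$ are prescribed as inputs (not required to satisfy $\partial_\xi F = 0$), the identity $z_{k+1} = \widehat{\sigma}_k(z_k)$ need not hold — instead one uses the weaker but sufficient fact that, in computing $v_1 = \sum(-1)^{k+1}w_k$ with $w_k = \frac{z_k+\widehat\sigma_k(z_k)}{2}$, pairing consecutive terms shows the sum collapses modulo the free endpoint $z_{m+n+1}$ only when consecutive trajectory points are genuinely related; so in fact the clean statement is obtained by restricting attention to $\mathbf{z}$ of the form $z_{k+1}=\widehat\sigma_k(z_k)$ for $k\le m+n$, which is the natural parametrization implicit in the lemma, and then the argument above applies verbatim.
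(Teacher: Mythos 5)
Your approach matches the paper's own: both observe that the first $m$ and the last $w$\nobreakdash-coordinates of $\psi(\mathbf{z}^1,\mathbf{z}^2,z_{m+n+1})$ and $\psi'(\mathbf{z}^1,\mathbf{z}^3,z_{m+n+1})$ coincide (because $\boldsymbol{\sigma}$ is shared and $w_{m+n+1}=z_{m+n+1}$), and both want to deduce $(v_1,\dots,v_m)=(v_1',\dots,v_m')$. The paper records these equalities as the outer blocks of $A\mathbf{v}'$ and $A\mathbf{v}$ agreeing, then says the conclusion ``follows from a direct computation.'' Your reduction, via $v_{k+1}=2w_k-v_k$, to the single claim $v_1=v_1'$ is a cleaner way to see what that computation has to deliver, and it exposes the difficulty: $v_1=\sum_{k}(-1)^{k+1}w_k$ involves \emph{all} the $w$\nobreakdash-coordinates, including the middle ones, which differ between $\psi$ and $\psi'$. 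So $v_1=v_1'$ is false for arbitrary $\mathbf{z}^2,\mathbf{z}^3$; for instance, with $d=1$, $m=n=2$, all maps equal to $\id$ (so $\psi=\psi'=\id$), $\mathbf{z}^1=(1,0)$, $z_5=0$, $\mathbf{z}^2=(1,0)$, $\mathbf{z}^3=(0,1)$ one gets $v_1=2$ and $v_1'=0$. You correctly spotted this.

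Your proposed fix, restricting to discrete trajectories $z_{k+1}=\widehat\sigma_k(z_k)$, is the right one and is exactly the regime in which the paper applies the lemma in Section~\ref{se:homaction} (critical points whose $\boldsymbol{\sigma}$\nobreakdash-segment and endpoint $z_{m+n+1}$ are shared); under that constraint the alternating sum telescopes to $\tfrac{z_1+z_{m+n+1}}{2}$, which depends only on shared data, and then the recursion finishes the argument. However, your final paragraph hedges and claims the restriction is ``implicit in the lemma,'' when in fact the lemma as written quantifies over all $\mathbf{z}^1,\mathbf{z}^2,\mathbf{z}^3,z_{m+n+1}$; the missing hypothesis is a genuine defect of the stated lemma, not of your proof, and the paper's ``direct computation'' does not go through either because $A^{-1}$ mixes the middle $w$\nobreakdash-block into $v_1,\dots,v_m$. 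What is actually true (and suffices for the application) is: if additionally $\sum_{k=m+1}^{m+n}(-1)^{k}\bigl(w_k-w_k'\bigr)=0$ — which holds whenever the middle entries come from discrete trajectories of $\boldsymbol{\delta}$ and $\boldsymbol{\delta}'$ with common starting and ending points — then $(v_1,\dots,v_m)=(v_1',\dots,v_m')$. State that condition explicitly rather than appealing to an implicit parametrization.
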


\begin{proof}
    Under the above hypothesis,
    \begin{equation*}
        \frac{v_{m+n+1}+v_1}{2} = z_{m+n+1} = \frac{v'_{m+n+1}+v'_1}{2}
        \ \text{ and } \
        \frac{v_k + v_{k+1}}{2} = \frac{z^1_k+\sigma_k(z^1_k)}{2} =
        \frac{v'_k + v'_{k+1}}{2},
    \end{equation*}
    where $k\in\{ 1,\ldots, n\}$. So that, with matrices,
    \begin{equation*}
        A\mathbf{v}' =
        \begin{bmatrix}
            I_{2md} \\
             & *  \\
             & & I_{2d}
        \end{bmatrix}
        A\mathbf{v}.
    \end{equation*}
    The conclusion then follows from a direct computation.
\end{proof}

\subsection{Generating family of the $S^1$-action}
\label{se:S1action}

In this section we follow Théret \cite{The98} and
study generating families of the unitary (Hamiltonian)
flow $g:t\mapsto e^{-2i\pi t}$ of $\C^{d+1}$
For $|t|<1/2$, the Hamiltonian diffeomorphism
$g_t(z):= e^{-2i\pi t}z$ admits the elementary quadratic generating function
\begin{equation*}
    q_t(w) := -\tan (\pi t)\| w\|^2, \quad
    \forall w\in\C^{d+1}.
\end{equation*}
Let $\boldsymbol{\delta}_t$ be the $m$-tuple $(g_{t/(m-1)},\ldots,g_{t/(m-1)},\id)$ with
$m\geq 5$ odd such that $F_{\boldsymbol{\delta}_t}$ generates $g_t$ for
$t\in(-\varepsilon,1+\varepsilon)$ where $\varepsilon >0$ is arbitrarily fixed
(we have only put a final ``$\id$'' in $\boldsymbol{\delta}_t$
in order for us to apply Lemma~\ref{lem:commonfactor}
in a further section without trouble).

\begin{lem}\label{lem:indQ}
    With the above notations,
    \begin{equation*}
        \ind (F_{\boldsymbol{\delta}_1}) - \ind (F_{\boldsymbol{\delta}_0})
        = 2(d+1).
    \end{equation*}
\end{lem}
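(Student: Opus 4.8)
The plan is to compute each index $\ind(F_{\boldsymbol{\delta}_t})$ for $t=0$ and $t=1$ separately, using the fact that for these two values the diffeomorphism $g_t$ being generated is simply the identity of $\C^{d+1}$, so that $F_{\boldsymbol{\delta}_t}$ is a quadratic generating function of the $0$-section. First I would observe that for $t=0$ we have $\boldsymbol{\delta}_0 = (\id,\ldots,\id)$ and $F_{\boldsymbol{\delta}_0}(\mathbf{v}) = \sum_{k} \tfrac{1}{2}\la v_k, i v_{k+1}\ra$, a purely ``off-diagonal'' quadratic form on $(\C^{d+1})^m$; since this is the generating function $F_{\boldsymbol{\delta}}$ attached to a constant tuple of identities, one checks directly (as in Lemma~\ref{lem:quadidentity}) that its index is that of the cyclic antisymmetric pairing $\sum \la v_k, i v_{k+1}\ra$ on $(\C^{d+1})^{m}$, which by the eigenvalue analysis below equals $(d+1)(m-1)$.

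The heart of the computation is the case $t=1$. Here $g_1 = \id$ as well, but the path $(g_t)_{t\in[0,1]}$ has wound once around, so the Maslov-index bookkeeping of Section~\ref{se:mas} is exactly what detects the difference. Concretely, $(g_t)$ is the $\C$-equivariant flow $z\mapsto e^{-2i\pi t}z$, whose differential at the fixed point $0$ is the path $t\mapsto e^{-2i\pi t}\,\id_{\C^{d+1}}$ in $\Symp(2(d+1))$; identifying $\C^{d+1}\simeq\R^{2(d+1)}$ this is $(d+1)$ copies of the rotation loop of Proposition~\ref{prop:maspath}~(\ref{it:rot}), hence has Maslov index $-2(d+1)$. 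On the other hand, by the discussion following Proposition~\ref{prop:maspath}, if $(F_t)$ is a continuous family of generating functions of the flow $(g_t)$ and $\zeta_t$ is the continuous family of critical points associated to the fixed point $0$ (here $\zeta_t=0$ for all $t$), then
\begin{equation*}
    \mas\bigl((\ud g_t(0))\bigr) = \ind\bigl(\ud^2 F_1(0)\bigr) - \ind\bigl(\ud^2 F_0(0)\bigr) = \ind(F_{\boldsymbol{\delta}_1}) - \ind(F_{\boldsymbol{\delta}_0}),
\end{equation*}
the last equality because $F_{\boldsymbol{\delta}_t}$ is already quadratic so it equals its own Hessian at $0$. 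Combining with $\mas((\ud g_t(0))) = -2(d+1)$ would give the \emph{wrong sign}; the point is that $\tau$ sends the graph of $\Phi$ via $z\mapsto i(z-\Phi(z))$, so a careful sign check (the generating function $q_t(w) = -\tan(\pi t)\|w\|^2$ has index jumping from $0$ to $d+1$ as $t$ crosses $1/2$ from below, and there is no further jump on $(1/2,1)$ since $\tan$ stays negative after the pole — wait, it stays \emph{positive}) shows the index \emph{increases} by $2(d+1)$ across the full turn. I would therefore organize the argument as: (i) the family $(F_{\boldsymbol{\delta}_t})_{t\in(-\varepsilon,1+\varepsilon)}$ is a continuous family of quadratic generating functions of the flow $(g_t)$; (ii) hence $\ind(F_{\boldsymbol{\delta}_1}) - \ind(F_{\boldsymbol{\delta}_0}) = \mas((\ud g_t(0)))$; (iii) compute the right-hand side as $-(d+1)\cdot(-2) = 2(d+1)$ using Proposition~\ref{prop:maspath}~(\ref{it:rot}) and (\ref{it:sum}), being careful that the Maslov-index convention in Section~\ref{se:mas} is the one for which $\ind(Q_1)-\ind(Q_0)$ is the defining formula, so no spurious sign appears.

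The main obstacle I anticipate is not the eigenvalue computation but pinning down the orientation/sign conventions: one must verify that $\boldsymbol{\delta}_t$ really does generate $g_t$ for all $t\in(-\varepsilon,1+\varepsilon)$ (each factor $g_{t/(m-1)}$ has $|t/(m-1)|<1/2$ once $m\geq 5$ and $t<1+\varepsilon$, so $q_{t/(m-1)}$ is defined), and that the composition formula assembling the $q_{t/(m-1)}$'s into $F_{\boldsymbol{\delta}_t}$ is compatible with the Maslov index defined via $\tau$. Once the family $(F_{\boldsymbol{\delta}_t})$ is known to be a legitimate generating family of $(g_t)$, the identity $\ind(F_{\boldsymbol{\delta}_1})-\ind(F_{\boldsymbol{\delta}_0}) = \mas(z=0,(g_t)) = \mas\bigl((e^{-2i\pi t}\id_{2(d+1)})\bigr) = 2(d+1)$ is immediate from Proposition~\ref{prop:maspath}, and the lemma follows. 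An alternative, purely linear-algebra route — diagonalizing the quadratic form $F_{\boldsymbol{\delta}_1}$ on $(\C^{d+1})^m$ directly and comparing with $F_{\boldsymbol{\delta}_0}$ by counting negative eigenvalues of the two circulant-type matrices, which differ only in one block by a sign of $\tan(\pi/(m-1))$ versus the twisted boundary identification — would also work and might be preferable if one wants to keep the proof self-contained; but it is more calculation-heavy, so I would present the Maslov-index argument as the main proof.
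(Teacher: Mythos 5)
Your overall plan matches the paper's one-line proof exactly: recognize that $F_{\boldsymbol{\delta}_t}$ is a generating family of the unitary flow $(g_t)$, so that by the defining formula of the Maslov index $\ind(F_{\boldsymbol{\delta}_1})-\ind(F_{\boldsymbol{\delta}_0})=\mas(0,(g_t))$, and then compute this Maslov index from Proposition~\ref{prop:maspath}. That is the right idea. However, the sign bookkeeping — which is the entire content of the lemma once the structure is in place — is never actually resolved in your write-up. You first identify $t\mapsto e^{-2i\pi t}\id_{\C^{d+1}}$ with $(d+1)$ copies of the loop $S$ of Proposition~\ref{prop:maspath}~(\ref{it:rot}), but $S_t$ is multiplication by $e^{+2i\pi t}$ (check: $(q,p)\mapsto(q\cos\theta-p\sin\theta,\ q\sin\theta+p\cos\theta)$ is $q+ip\mapsto e^{i\theta}(q+ip)$), so $g_t$ is not $S_t$ but its reverse $S_{1-t}$. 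The missing ingredient is a single application of Proposition~\ref{prop:maspath}~(\ref{it:rev}): $\mas((e^{-2i\pi t})_{t\in[0,1]})=-\mas(S)=2$, whence $\mas((g_t))=(d+1)\cdot 2 = 2(d+1)$ by (\ref{it:sum}). Your final formula $-(d+1)\cdot(-2)$ produces the right number, but the extra minus sign is placed in front of $(d+1)$ with no justification, and the statement that ``no spurious sign appears'' is an assertion, not an argument.

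The intervening digression is incorrect and should be dropped. You claim $q_t(w)=-\tan(\pi t)\|w\|^2$ has index jumping ``from $0$ to $d+1$ as $t$ crosses $1/2$'': the rank is $2(d+1)$, not $d+1$; and since $\tan(\pi t)$ changes from $+\infty$ to $-\infty$ at $t=1/2$, the coefficient $-\tan(\pi t)$ goes from $-\infty$ to $+\infty$, so the index drops from $2(d+1)$ to $0$. More importantly, none of this is relevant: in $\boldsymbol{\delta}_t=(g_{t/(n_2-1)},\dots,g_{t/(n_2-1)},\id)$ each factor has parameter $|t/(n_2-1)|<1/2$ for $t\in(-\varepsilon,1+\varepsilon)$ and $n_2\geq 5$, so no factor ever crosses the pole, and the index of $F_{\boldsymbol{\delta}_t}$ varies only through the cross terms of the composition, precisely the phenomenon the Maslov index tracks. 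Finally, your opening claim that $\ind(F_{\boldsymbol{\delta}_0})=(d+1)(m-1)$ is asserted with no computation and is never used; the lemma only needs the difference of indices, so the paper (correctly) never computes either $\ind(F_{\boldsymbol{\delta}_0})$ or $\ind(F_{\boldsymbol{\delta}_1})$ individually. If you strip your argument down to points (i), (ii), (iii) and replace the hand-waving in (iii) with the explicit appeal to Proposition~\ref{prop:maspath}~(\ref{it:rev}), (\ref{it:rot}), (\ref{it:sum}), you recover the paper's proof.
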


\begin{proof}
    According to Proposition~\ref{prop:maspath}
    (\ref{it:sum}) and (\ref{it:rot}),
    $\mas(z,(g_t)) = 2(d+1)$ for
    all $z\in\C^{d+1}$.
    Since $F_{\boldsymbol{\delta}_t}$ is a generating function of $g_t$,
    the result follows by definition of the Maslov index.
\end{proof}

\begin{lem}[{compare with \cite[Lemma~4.4]{The98}}]\label{lem:leq}
    Let $\boldsymbol{\sigma}$ be a $m'$-tuple, with $m'$ even, such that
    $F_t := F_{(\boldsymbol{\sigma},\boldsymbol{\delta}_t)} : (\C^d)^{m'+m} \to
    \R$ is a smooth family of conical generating functions.  Then
    \begin{enumerate}[(i)]
        \item \label{item:leq}
            $\partial_t F_t(\mathbf{v}) \leq 0$,
            $\forall \mathbf{v}\in (\C^{d+1})^{m'+m}$,
        \item \label{item:sigma}
            $\partial_t F_t(\mathbf{v}) < 0$,
            $\forall \mathbf{v}\in \Sigma_{F_t}\setminus 0$.
    \end{enumerate}
\end{lem}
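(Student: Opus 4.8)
The plan is to reduce everything to the behavior of the elementary building blocks $q_t$ and to track how the variables $v_k$ enter $F_t$. Write $F_t = F_{(\boldsymbol{\sigma},\boldsymbol{\delta}_t)}$. By the decomposition formula \eqref{eq:composition},
\[
F_t(\mathbf{v}) = F_{(\boldsymbol{\sigma},\id)}(v_1,\dotsc,v_{m'+1})
+ F_{(\boldsymbol{\delta}_t,\id)}(v_{m'+1},\dotsc,v_{m'+m},v_1),
\]
and only the second summand depends on $t$. So it suffices to compute $\partial_t$ of $F_{(\boldsymbol{\delta}_t,\id)}$. Since $\boldsymbol{\delta}_t = (g_{t/(m-1)},\dotsc,g_{t/(m-1)},\id)$ with $g_s$ generated by $q_s(w) = -\tan(\pi s)\|w\|^2$, the explicit formula for $F_{\boldsymbol{\delta}}$ gives
\[
F_{(\boldsymbol{\delta}_t,\id)}(w_1,\dotsc,w_{m}) = \sum_{k=1}^{m-1} q_{t/(m-1)}\!\left(\tfrac{w_k+w_{k+1}}{2}\right) + (\text{terms independent of }t),
\]
where the $t$-independent terms are the symplectic area pieces $\tfrac12\la w_k,iw_{k+1}\ra$ and the ``$\id$'' slot contributes nothing to the $t$-derivative. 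Hence
\[
\partial_t F_t(\mathbf{v}) = -\frac{\pi}{m-1}\,\frac{1}{\cos^2\!\big(\tfrac{\pi t}{m-1}\big)}\sum_{k=1}^{m-1}\left\|\tfrac{w_k+w_{k+1}}{2}\right\|^2,
\]
with $w_k = \tfrac{v_{m'+k}+v_{m'+k+1}}{2}$ the appropriate slots (indices cyclic). For $t\in(-\varepsilon,1+\varepsilon)$ and $m\geq 5$ the argument $\tfrac{\pi t}{m-1}$ stays in $(-\pi/2,\pi/2)$, so the prefactor is strictly negative and the sum of squared norms is $\geq 0$; this immediately gives item~\eqref{item:leq}.

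For item~\eqref{item:sigma} I must show the sum of squared norms is strictly positive at every $\mathbf{v}\in\Sigma_{F_t}\setminus 0$. The key point is that $\partial_t F_t(\mathbf{v}) = 0$ forces $\tfrac{w_k+w_{k+1}}{2}=0$ for all $k$ in the $\boldsymbol{\delta}_t$-block, i.e.\ the discrete trajectory through those slots is identically zero at the ``midpoints''. Combined with the critical-point equations $\partial_\xi F_t = 0$, which by Proposition~\ref{prop:gf} say the associated discrete points $z_j$ satisfy $z_{j+1}=g_{t/(m-1)}(z_j)=e^{-2i\pi t/(m-1)}z_j$ throughout the unitary block (and $z_{j+1}=\sigma_j(z_j)$ in the $\boldsymbol{\sigma}$-block), vanishing of all the midpoints $\tfrac{z_j+e^{-2i\pi t/(m-1)}z_j}{2}$ forces $z_j=0$ for every index in the unitary block (the scalar $\tfrac{1+e^{-2i\pi t/(m-1)}}{2}$ is nonzero since the angle lies in $(-\pi/2,\pi/2)$). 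Running the composition relations backward through $\boldsymbol{\sigma}$ — each $\sigma_j$ being a diffeomorphism fixing $0$ — then forces all $z_j=0$, hence $\mathbf{w}=0$, hence $\mathbf{v}=0$ because $\mathbf{w}=A(\mathbf{v})$ and $A$ is the isomorphism of Proposition~\ref{prop:gf} (recall $m'+m$ is odd). This contradicts $\mathbf{v}\in\Sigma_{F_t}\setminus 0$, proving the strict inequality.

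The main obstacle is the bookkeeping in item~\eqref{item:sigma}: one has to be careful that the critical-point equations $\partial_\xi F_t=0$ only constrain the $z_j$ through the \emph{auxiliary} variables, and that ``$\partial_t F_t=0$'' genuinely kills \emph{all} the relevant midpoints rather than just their sum (this is automatic here because $\partial_t F_t$ is a nonpositive sum of terms, each a negative constant times a square, so a zero sum forces each term to vanish). A secondary subtlety is checking that the $\boldsymbol{\sigma}$-block does not reintroduce a spurious nonzero critical point once the unitary block is known to be zero; this follows from $\sigma_j(0)=0$ and the invertibility of $\ud\sigma_j$ near $0$ used in Proposition~\ref{prop:gf}, together with tracing the cyclic relation $z_1 = \Phi(z_1)$ through the decomposition, but it should be spelled out rather than asserted. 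Everything else is the routine $\tan$-derivative computation recorded above.
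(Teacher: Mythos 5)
Your proof is correct and follows the paper's argument exactly: the $t$-derivative of $F_t$ comes entirely from the building blocks $q_{t/(m-1)}$ and is a nonpositive sum of terms proportional to squared midpoint norms (with the prefactor $-\tfrac{\pi}{m-1}\sec^2(\pi t/(m-1))$ strictly negative since $m\geq 5$ keeps the angle away from $\pm\pi/2$), and on $\Sigma_{F_t}\setminus 0$ the vanishing of those midpoints kills the $z_k$'s in the unitary block, whose zeros then propagate through the whole discrete trajectory because the $\sigma_j$'s are conical diffeomorphisms fixing the origin, forcing $\mathbf{w}=0$ and hence $\mathbf{v}=0$ since $A$ is an isomorphism for $m'+m$ odd. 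The only cosmetic slips are that $F_{(\boldsymbol{\delta}_t,\id)}$ has $m+1$ rather than $m$ slots, and that the backward propagation through $\boldsymbol{\sigma}$ needs nothing more than conicality and invertibility of the $\sigma_j$'s (the cyclic relation $z_1=\Phi(z_1)$ is neither available on $\Sigma_{F_t}$ nor needed); neither affects the substance.
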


\begin{proof}
    The first property is a direct consequence of the definitions
    and the fact that $\partial_t (\tan(\pi t/m)) > 0$.
    Let $\mathbf{v}=(v_1,\dotsc,v_{m+m'})\in(\C^{d+1})^{m+m'}$
    be such that $\partial_t F_t(\mathbf{v}) = 0$.
    Then, for $m'+1\leq k < m'+m$,
    $w_k := \frac{v_k + v_{k+1}}{2} =0$
    thus $z_k = 0$ where the family $\mathbf{z}=(z_k)$
    is associated to the family $\mathbf{w}=(w_k)$ \emph{via}
    (\ref{eq:egf}) as usual.
    Thus if $\mathbf{v}\in\Sigma_{F_t}$, $z_k$ must be $0$
    for all $k$ for the sequence $(z_1,\dotsc,z_{m'+m})$
    to be the discrete dynamics of conical diffeomorphisms,
    hence $\mathbf{w} = 0$ and $\mathbf{v} = 0$.
\end{proof}

\subsection{A discrete variational principle for $\C$-equivariant Hamiltonian
diffeomorphism}\label{se:action}

Let $(\varphi_t)$ be the Hamiltonian flow of $\CP^d$
associated to the Hamiltonian map $h:[0,1]\times \CP^d\to\R$.
Let $\tilde{h}:[0,1]\times \sphere{2d+1}\to\R$ be
the $S^1$-invariant lift of $h$ defined by
$\tilde{h}_t:=h_t\circ\pi$ where $\pi:\sphere{2d+1}\to \CP^d$
is the quotient map $\pi(z):=[z]$.
Let $H:[0,1]\times\C^{d+1}\to\R$ be the $2$-homogeneous
Hamiltonian map such that $H_t(\lambda x):=\lambda^2 \tilde{h}_t(x)$
for all $x\in\sphere{2d+1}$.
It defines a $\C$-equivariant symplectic flow $(\Phi_t)$
stabilizing the Euclidean sphere $\sphere{2d+1}$ and
such that
\begin{equation}\label{eq:Phiphi}
    \pi \circ \Phi_t|_{\sphere{2d+1}} =
        \varphi_t\circ\pi,\quad
        \forall t\in[0,1].
\end{equation}
This flow $(\Phi_t)$ is uniquely defined by the choice of Hamiltonian
map $(h_t)$ of $(\varphi_t)$.
In fact, if $(\Phi'_t)$ is a $\C$-equivariant Hamiltonian flow
stabilizing the sphere and such that (\ref{eq:Phiphi}), then
$\Phi'_t = e^{i\theta(t)}\Phi_t$ which boils down to a change
of equivalent Hamiltonian map $(h'_t)$ for $(\varphi_t)$.
We will usually write $\varphi:=\varphi_1\in\ham(\CP^d)$
and $\Phi:=\Phi_1\in\ham_\C(\C^{d+1})$.
Given a choice of Hamiltonian map $(h_t)$,
the \emph{action} $a(x)\in\R/\Z$ of a fixed point $x\in\CP^d$
is defined by
\begin{equation*}\label{eq:defaction}
    a(x) := -\frac{1}{\pi}\left(
    \int_D \omega + \int_0^1 h_t\circ\varphi_t (x)\ud t\right)
    \in \R/\Z,
\end{equation*}
where $D\subset \CP^d$ is a $2$-disc filling the contractible loop
$\gamma := (\varphi_t)_{t\in[0,1]}$, that is $\partial D = \gamma$
(the $1/\pi$ factor is a standard renormalization to simplify notations).
Fixed points $x\in\CP^d$ of action $a\in\R/\Z$ are in one-to-one
correspondence with $\C$-lines $\C X\subset \C^{d+1}$
such that $\Phi_1(X) = e^{2i\pi a}X$, $X\in\C^{d+1}\setminus 0$
(see \cite[Prop.~5.8]{The98}).
Since the action only depends on the choice of lift $(\Phi_t)$,
when such a lift is given, we will simply call it
the action of $(\varphi_t)$ or 
the action of $\varphi$.

Following Théret,
we now define a map $\action : M\to\R$ that provides a variational principle
for fixed points of $(\varphi_t)$.
Let $\varepsilon>0$, let  $(\boldsymbol{\delta}_t)$ be one of the families
of odd tuples associated to $(g_t)$ defined in Section~\ref{se:S1action} for
$t\in(-\varepsilon,1+\varepsilon)$ and let $(\boldsymbol{\sigma}_s)$ be an even
continuous family of tuples associated to $(\Phi_s)$ of the form
$(\boldsymbol{\sigma}'_s,\boldsymbol{\delta}_0)$. Then $F_{s,t} :=
F_{(\boldsymbol{\sigma}_s,\boldsymbol{\delta}_t)}:\C^{N+1}\to\R$ gives us a
family of conical functions generating $e^{-2i\pi t}\Phi_s$.
In order to simplify notation, let $F_t := F_{1,t}$ be the family of
conical functions generating $e^{-2i\pi t}\Phi$, $t\in(-\varepsilon,1+\varepsilon)$.
Let $\tilde{f}:(-\varepsilon,1+\varepsilon)\times\sphere{2N+1}\to\R$
be the $S^1$-invariant function $\tilde{f}(t,\zeta) := F_t(\zeta)$ for $|\zeta|=1$
and $f:(-\varepsilon,1+\varepsilon)\times\CP^N \to\R$
be the induced function.
Then there is a one-to-one correspondence between
fixed points of $\varphi$ of action 
$\bar{t}\in\R/\Z$
and critical points of $f(t,\cdot)$ with value $0$ for 
any $t\in (-\varepsilon,1+\varepsilon)$.

According to property (\ref{item:sigma}) of Lemma~\ref{lem:leq},
the differential $\ud\tilde{f} = \partial_t (F_t) \ud t + \ud F_t$
never vanished on $\C^{N+1}\setminus 0$
so $0$ is a regular value of $f$.
Let $I:=(-\varepsilon,1+\varepsilon)$.
Let $M := \{ (t,\zeta)\in I\times \CP^N \ |\ f(t,\zeta) = 0 \}$
and $\action: M\to I$ be the projection onto the first factor.
Fixed points of action $\bar{t}\in\R/\Z$
are in one-to-one correspondence with critical points of
$\action$ with value $t$:
more precisely $\ud\action(t,\zeta) = 0 \Leftrightarrow
\ud_\zeta f(t,\zeta) = 0$.
Moreover, if $(t,\zeta)\in M$ is a critical point of $\action$,
then the Hessian $\ud^2\action(t,\zeta)$ is equivalent as a quadratic form to
$\ud_{\zeta,\zeta}^2 f(t,\zeta)$ which is equivalent to $\ud^2
F_t(\tilde{\zeta})$ restricted to a complement of
the $\C$-line induced by $\tilde{\zeta}\in\sphere{2N+1}$,
where $\tilde{\zeta}$ is a lift of $\zeta\in\CP^N$
(because $F_t$ is conical).
Since this line $\C\tilde{\zeta}$ is included in $\ker \ud^2 F_t(\tilde{\zeta})$,
critical points $(t,\zeta)\in M$ and
$\tilde{\zeta}\in\C^{N+1}$ share the same index.
Moreover, if $z\in\CP^d$ and $Z\in\C^{d+1}$ are fixed points
associated to $\zeta\in\CP^N$ and $\tilde{\zeta}\in\C^{N+1}$
respectively, since
\begin{equation*}
    \dim\ker \ud^2 F_t(\tilde{\zeta}) =
    \dim\ker (e^{-2i\pi t}\ud\Phi(Z)-\id)
\end{equation*}
one has
\begin{equation}\label{eq:kerhess}
    \dim\ker \ud^2_{\zeta,\zeta} f(t,\zeta) = 
    \dim\ker (\ud\varphi(z)-\id) =: \nu(z).
\end{equation}

\subsection{Cohomology of sublevel sets of $\action$}\label{se:homaction}
We recall that $H^*$ denotes the singular cohomology with integral coefficients.
Let $p:I\times\CP^N\to\CP^N$ be the projection on the second space
and $i:M\hookrightarrow I\times\CP^N$ be the inclusion map.
Let $\widehat{F}_t : \CP^N \to \R$ be the $C^1$ map
induced by $F_t|_{\sphere{2N+1}}$.
According to Lemma~\ref{lem:leq}, if $s\leq t$,
then $\widehat{F}_t\leq \widehat{F}_s$ so that
$\left\{\widehat{F}_s\leq 0\right\}\subset \left\{\widehat{F}_t\leq 0\right\}$.
Thus the subspace
\begin{equation*}
    A_t := \left\{ (s,\zeta)\in (-\varepsilon,t]\times\CP^N\ |\ 
    \widehat{F}_s(\zeta)\leq 0\right\}
\end{equation*}
retracts on $t\times\left\{\widehat{F}_t\leq 0\right\}$,
hence $p$ induces an isomorphism $H^*\left(\left\{\widehat{F}_t\leq 0\right\}\right)
\to H^*(A_t)$
for all $t\in I$ and thus induces isomorphisms 
\begin{equation}\label{eq:isomp}
    p^* : H^*\left(\left\{\widehat{F}_b\leq 0\right\},
    \left\{\widehat{F}_a\leq 0\right\}\right) \to H^*(A_b,A_a),
\end{equation}
for $a\leq b$ in $I$.
Let $a\leq b$ in $I$ and $e>0$ such that $a-e\in I$,
the subspace $A_b$ retracts on $\{\action\leq b\}\cup A_{a-e}$
by $(t,\zeta)\mapsto (s,\zeta)$ where $s$ is the maximal $r\in (a-e,t]$
satisfying $\widehat{F}_r(\zeta)=0$ or $s=a-e$ if such a max does not exist.
By excision, we then have that $i$ induces an isomorphism
\begin{equation}\label{eq:isomi}
    i^* : H^*(A_b,A_a) \to H^*(\{\action\leq b\},\{\action \leq a\}),
\end{equation}
for all $a\leq b$ in $I$.
Putting (\ref{eq:isomp}) and (\ref{eq:isomi}) together, we get the following
\begin{lem}\label{lem:isocohom}
    For all $a\leq b$ in $I$, the composition $p\circ i$ induces
    an isomorphism in cohomology
    \begin{equation*}
        H^*\left(\left\{\widehat{F}_b\leq 0\right\},
        \left\{\widehat{F}_a\leq 0\right\}\right) \simeq
        H^*(\{\action\leq b\},\{\action \leq a\}).
    \end{equation*}
\end{lem}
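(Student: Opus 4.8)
The plan is that this lemma needs no new work: it simply combines the two isomorphisms (\ref{eq:isomp}) and (\ref{eq:isomi}) already established. The one point to verify is that $p\circ i$ genuinely restricts to a map of pairs $(\{\action\leq b\},\{\action\leq a\})\to(\{\widehat{F}_b\leq 0\},\{\widehat{F}_a\leq 0\})$. This follows from the monotonicity $\widehat{F}_t\leq\widehat{F}_s$ for $s\leq t$ (Lemma~\ref{lem:leq}~(\ref{item:leq})), already invoked above to define $A_t$: if $(s,\zeta)\in M$ with $s\leq b$, then $\widehat{F}_s(\zeta)=0$ by definition of $M$, hence $\widehat{F}_b(\zeta)\leq\widehat{F}_s(\zeta)=0$, and similarly with $a$ in place of $b$.

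Next I would observe that, at the level of pairs, $p\circ i$ factors as the inclusion $(\{\action\leq b\},\{\action\leq a\})\hookrightarrow(A_b,A_a)$ inducing the map $i^*$ of (\ref{eq:isomi}) — note $\{\action\leq b\}\subset A_b$ since $(s,\zeta)\in M$ with $s\leq b$ gives $\widehat{F}_s(\zeta)=0\leq 0$ — followed by the projection $p:(A_b,A_a)\to(\{\widehat{F}_b\leq 0\},\{\widehat{F}_a\leq 0\})$ inducing the map $p^*$ of (\ref{eq:isomp}). By contravariant functoriality of $H^*$ this gives $(p\circ i)^*=i^*\circ p^*$, and since both $p^*$ and $i^*$ are isomorphisms, so is $(p\circ i)^*$, which is the claim.

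I do not expect any real obstacle here. The only care needed is in the bookkeeping of which inclusions and projections induce which isomorphisms, and in the fact — already arranged in the constructions preceding the statement — that the retractions and excisions behind (\ref{eq:isomp}) and (\ref{eq:isomi}) are compatible with the subspaces $A_a$ and $\{\action\leq a\}$, so that they descend to the relative cohomology groups.
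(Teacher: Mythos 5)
Your proposal is correct and follows exactly the paper's intended argument: the lemma is stated after (\ref{eq:isomp}) and (\ref{eq:isomi}) precisely as their composition, and your verification that $p\circ i$ factors as the inclusion into $(A_b,A_a)$ followed by the projection (both maps of pairs by the monotonicity of $t\mapsto\widehat F_t$) makes explicit the bookkeeping the paper leaves implicit. Nothing is missing.
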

This result naturally generalizes by replacing large inequality by strict ones
on one or both sides of each topological pairs.
It also extends to local cohomology, the precise statement
being given in the next section.

\subsection{Local cohomology of a fixed point}\label{se:lochom}
Let $z\in\CP^d$ be a fixed point of $\varphi$
with $\Phi(Z) = e^{2i\pi t}Z$ where $Z\in\sphere{2d+1}$
is a lift of $z$.
We denote by $\lochom^*(z,t)$ the local cohomology of $\action$
at the critical point $(t,\zeta)$ corresponding to $z$, \emph{i.e.}
\begin{equation*}
    \lochom^* (z,t) = H^*(\{ \action \leq t\},\{\action \leq t\}\setminus (t,\zeta)).
\end{equation*}
This group depends only on the germ of $\action$ at $(t,\zeta)$.
Namely, for all neighborhoods $U\subset M$
of $(t,\zeta)$,
\begin{equation*}
    \lochom^* (z,t) \simeq H^*(U\cap\{ \action \leq t\},
    U\cap \{\action \leq t\}\setminus (t,\zeta)).
\end{equation*}
By an argument similar to the proof of Lemma~\ref{lem:isocohom}, the map $p\circ i : M\to\CP^N$
induces an isomorphism
\begin{equation*}
    \lochom^* (z,t) \simeq H^*\left(\left\{ \widehat{F_t} \leq 0\right\},
    \left\{ \widehat{F_t}\leq 0\right\}\setminus \zeta\right).
\end{equation*}
Thus $\lochom^*(z,t)$ is isomorphic to the local cohomology
of $\widehat{F}_t$ at the point $\zeta$ which we denote
by $\lochom^*(\widehat{F}_t;\zeta)$.
The support of a cohomology group $C^*$ is defined by
\begin{equation*}
    \supp C^* := \{ k\in\Z\ |\ C^k \neq 0\} \subset \Z.
\end{equation*}
A classical result due to Gromoll-Meyer
\cite[remark following Lemma~1]{GM69b}
implies that for any smooth function $f:M\to\R$
and any isolated critical point $x\in M$,
\begin{equation*}
    \supp \lochom^*(f;x) \subset \left[ \ind(x,f),\ind(x,f)
    + \dim\ker\ud^2 f(x) \right].
\end{equation*}
According to (\ref{eq:kerhess}), we thus have
\begin{equation}\label{eq:suppind}
    \supp \lochom^*(z,t) \subset \left[ \ind(\tilde{\zeta},F_t),\ind(\tilde{\zeta},F_t)
    + \nu(z) \right],
\end{equation}
where $\tilde{\zeta}\in\C^{N+1}$ is a lift of $\zeta$.

We want to study the relationship between the local cohomology
groups $\lochom^*(z,t)$ and $\lochom^*(z,t+1)$ when $z$ is a fixed point
of action $t\in(-\varepsilon,\varepsilon)$.
Let us assume that $t=0$ and let $(j,\zeta_j)\in M$, $j=0,1$,
be the critical points associated to $z$.
Let $(\mathbf{u}^j,\mathbf{v}^j)\in (\C^{d+1})^{n+1}\times(\C^{d+1})^{m-1}$ be
lifts of $\zeta_j$.
According to Lemma~\ref{lem:commonfactor},
one can take $\mathbf{u}^0=\mathbf{u}^1$.
According to proposition~\ref{prop:stabilization},
there exist $\C$-linear maps $A_j:(\C^{d+1})^{m+n}\to(\C^{d+1})^{m-1}$
such that $F_j(\mathbf{u},A_j(\mathbf{u},\mathbf{v}))=g(\mathbf{u})+Q_j(\mathbf{v})$
where $g=F_{(\boldsymbol{\sigma},\id)}$,
$Q_j(\mathbf{v})=F_{\boldsymbol{\delta}_j}(0,\mathbf{v})$
and the linear maps have the form $A_j(\mathbf{u},\mathbf{v})=\mathbf{v}+B_j(\mathbf{u})$.
Since the $(\mathbf{u}^0,\mathbf{v}^j)$'s are critical points 
and the $Q_j$'s are non-degenerated
$A_j(\mathbf{u}^0,0)=\mathbf{v}^j$.
\begin{lem}
    Let $C:(\C^{d+1})^{n+1}\to(\C^{d+1})^{m-1}$ be the linear map
    such that the following diagram commutes:
    \begin{equation}\label{cd:stab}
        \begin{gathered}
        \xymatrixcolsep{7pc}
        \xymatrix{
            \{ g\leq 0\}\times \{ Q_1\leq 0\} \ar[r]^-{(\mathbf{u},\mathbf{v})\mapsto
            (\mathbf{u},A_1(\mathbf{u},\mathbf{v}))} &
            \{ F_1 \leq 0 \} \\
            \{ g\leq 0\}\times \{ Q_0 \leq 0\} \ar[u]^-{(\mathbf{u},\mathbf{v})\mapsto
            (\mathbf{u},\mathbf{v}+C(\mathbf{u}))}
            \ar[r]^-{(\mathbf{u},\mathbf{v})\mapsto (\mathbf{u},A_0(\mathbf{u},\mathbf{v}))} &
            \{ F_0 \leq 0\} \ar@{^{(}->}[u]
        }
    \end{gathered}
    \end{equation}
    The set $\{ Q_0\leq 0\}$ is included in $\{ Q_1\leq 0\}$
    and the left hand vertical arrow is homotopic to the inclusion map
    $\{g\leq 0\}\times\{ Q_0\leq 0\} \hookrightarrow \{g\leq 0\}\times\{ Q_1\leq 0\}$.
\end{lem}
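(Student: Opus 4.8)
The plan is to first pin down $C$ by forcing the square in (\ref{cd:stab}) to commute, and then to verify the two auxiliary claims using only the monotonicity $F_1\leq F_0$ coming from Lemma~\ref{lem:leq}. Chasing the diagram: the composite ``left arrow then top arrow'' sends $(\mathbf{u},\mathbf{v})$ to $(\mathbf{u},\mathbf{v}+C(\mathbf{u})+B_1(\mathbf{u}))$, since $A_1(\mathbf{u},\cdot)=\cdot+B_1(\mathbf{u})$, while ``bottom arrow then right inclusion'' sends it to $(\mathbf{u},\mathbf{v}+B_0(\mathbf{u}))$. So I would set $C:=B_0-B_1$, which is $\C$-linear (being a difference of the $\C$-linear maps provided by Proposition~\ref{prop:stabilization}). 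With this choice commutativity of the square is exactly the identity $C+B_1=B_0$, so it holds by definition. Before using the diagram I should also note that the four arrows are well defined: each horizontal map lands in $\{F_j\leq0\}$ because $F_j(\mathbf{u},\mathbf{v}+B_j(\mathbf{u}))=g(\mathbf{u})+Q_j(\mathbf{v})\leq 0$ on $\{g\leq0\}\times\{Q_j\leq0\}$, and the right vertical inclusion $\{F_0\leq0\}\subset\{F_1\leq0\}$ follows from $\partial_tF_t\leq0$ (Lemma~\ref{lem:leq}~(\ref{item:leq})), which gives $F_1\leq F_0$ pointwise.

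The crux, and the one short computation, is the pointwise inequality
\begin{equation*}
    Q_1(\mathbf{v}+C(\mathbf{u}))\leq Q_0(\mathbf{v}),\qquad\text{for all }\mathbf{u}\in(\C^{d+1})^{n+1},\ \mathbf{v}\in(\C^{d+1})^{m-1}.
\end{equation*}
To prove it I would evaluate both $F_0$ and $F_1$ at the single point $(\mathbf{u},\mathbf{v}+B_0(\mathbf{u}))$. Writing $\mathbf{v}+B_0(\mathbf{u})=(\mathbf{v}+C(\mathbf{u}))+B_1(\mathbf{u})$ and using the defining identities of $A_0$ and $A_1$, one gets $F_0(\mathbf{u},\mathbf{v}+B_0(\mathbf{u}))=g(\mathbf{u})+Q_0(\mathbf{v})$ and $F_1(\mathbf{u},\mathbf{v}+B_0(\mathbf{u}))=g(\mathbf{u})+Q_1(\mathbf{v}+C(\mathbf{u}))$; subtracting and invoking $F_1\leq F_0$ cancels $g(\mathbf{u})$ and leaves the claimed inequality. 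Specializing to $\mathbf{u}=0$ (so $C(0)=0$ by linearity) gives $Q_1\leq Q_0$, hence $\{Q_0\leq0\}\subset\{Q_1\leq0\}$; this is the first assertion of the lemma and it is exactly what makes the left vertical arrow well defined.

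For the homotopy claim I would take the obvious affine interpolation $h_s(\mathbf{u},\mathbf{v}):=(\mathbf{u},\mathbf{v}+sC(\mathbf{u}))$, $s\in[0,1]$. The first coordinate is untouched, so $g\leq0$ is preserved; for the second coordinate, linearity of $C$ lets me write $\mathbf{v}+sC(\mathbf{u})=\mathbf{v}+C(s\mathbf{u})$, and the pointwise inequality applied with $s\mathbf{u}$ in place of $\mathbf{u}$ gives $Q_1(\mathbf{v}+sC(\mathbf{u}))\leq Q_0(\mathbf{v})\leq0$. Hence each $h_s$ maps $\{g\leq0\}\times\{Q_0\leq0\}$ into $\{g\leq0\}\times\{Q_1\leq0\}$, the family is jointly continuous, $h_0$ is the inclusion $\{g\leq0\}\times\{Q_0\leq0\}\hookrightarrow\{g\leq0\}\times\{Q_1\leq0\}$, and $h_1=(\mathbf{u},\mathbf{v})\mapsto(\mathbf{u},\mathbf{v}+C(\mathbf{u}))$ is the left vertical arrow, as required.

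The whole argument is essentially bookkeeping; the only place that needs a moment of care is the pointwise inequality $Q_1(\cdot+C(\mathbf{u}))\leq Q_0(\cdot)$ and, crucially, its use with the rescaled argument $s\mathbf{u}$ (possible precisely because $C$ is linear) to guarantee that the interpolation never leaves $\{Q_1\leq0\}$. Everything else is the diagram chase determining $C$ together with the monotonicity of $F_t$ already recorded in Lemma~\ref{lem:leq}.
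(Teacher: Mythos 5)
Your proposal is correct and follows essentially the same route as the paper: identify $C=B_0-B_1$ by diagram chasing, derive $Q_1\leq Q_0$ from the monotonicity $F_1\leq F_0$ coming from Lemma~\ref{lem:leq}, and check that the affine homotopy $h_s(\mathbf{u},\mathbf{v})=(\mathbf{u},\mathbf{v}+sC(\mathbf{u}))$ stays in $\{g\leq0\}\times\{Q_1\leq0\}$. The only cosmetic difference is in the last step: you first isolate the clean pointwise inequality $Q_1(\mathbf{v}+C(\mathbf{u}))\leq Q_0(\mathbf{v})$ and then rescale $\mathbf{u}$ by $s$, absorbing the factor via linearity of $C$, whereas the paper rescales $\mathbf{v}$ by $1/s$ and uses that $\{Q_0\leq0\}$ is a cone together with the $2$-homogeneity $Q_1(\mathbf{v}/s+C(\mathbf{u}))=s^{-2}Q_1(\mathbf{v}+sC(\mathbf{u}))$; the two rescalings are interchangeable.
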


\begin{proof}
    According to Lemma~\ref{lem:leq}, $Q_1\leq Q_0$ so that we have the
    inclusion of their sublevel sets.
    We now prove that the map $(\mathbf{u},\mathbf{v})\mapsto 
    (\mathbf{u},\mathbf{v}+sC(\mathbf{u}))$ is well
    defined from $\{ g\leq 0\}\times \{ Q_0\leq 0\}$ to
    $\{g\leq 0\}\times\{ Q_1\leq 0\}$ for $s\in[0,1]$
    in order to conclude.
    By the above inclusion, this is true for $s=0$.
    For all $(\mathbf{u},\mathbf{v})\in\{ g\leq 0\}\times\{ Q_0\leq 0\}$, we have
    $Q_1(\mathbf{v}+C(\mathbf{u}))\leq 0$ by definition of $C$
    thus for $s\in(0,1]$, since
    $(\mathbf{u},\mathbf{v}/s)$ is also in $\{ g\leq 0\}\times\{ Q_0\leq 0\}$, one has
    \begin{equation*}
        Q_1(\mathbf{v}/s + C(\mathbf{u})) = s^{-2}Q_1(\mathbf{v}+sC(\mathbf{u}))\leq 0.
    \end{equation*}
    Hence the result for $s\neq 0$.
\end{proof}

\begin{lem}\label{lem:cdhomstab}
    In the diagram (\ref{cd:stab}),
    the sublevel sets $\{ Q_j\leq 0\}$ retract on the maximal negative
    $\C$-subspaces of the $Q_j$'s through $\C$-linear maps.
    By taking the projection of these spaces on $\CP^N$,
    we get the following commutative diagram in cohomology:
    \begin{equation*}
        \xymatrix{
            H^*( X *\CP^{k+d+1}) \ar[d] & H^*\left(\left\{\widehat{F}_1\leq 0\right\}\right)
            \ar[l]^-{\simeq} \ar[d] \\
            H^*( X *\CP^k) & H^*\left(\left\{\widehat{F}_0\leq 0\right\}\right) \ar[l]^-{\simeq}
        }
    \end{equation*}
    where $X=\{ \widehat{g}\leq 0\}$,
    $\CP^k\subset\CP^{k+d+1}$ are projectively embedded subspaces of $\CP^N$
    (the projections of the negative spaces of the $Q_j$'s),
    vertical maps are induced by inclusions and
    the horizontal maps are isomorphisms induced by 
    $[\mathbf{u}:\mathbf{v}]\mapsto[\mathbf{u}:A_j(\mathbf{u},\mathbf{v})]$.
    The same is true when replacing $\{\widehat{F}_j\leq 0\}$ by
    $\{\widehat{F}_j\leq 0\}\setminus [\mathbf{u}^0:\mathbf{v}^j]$ and
    $X*\CP^{k+j(d+1)}$ by $X *\CP^{k+j(d+1)}\setminus [\mathbf{u}^0:0]$.
\end{lem}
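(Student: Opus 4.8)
The plan is to reduce the whole statement to two things proved earlier: the quadratic-normalization of Proposition~\ref{prop:stabilization} (which produces the $A_j$ and the $Q_j$) and the cohomological analysis of the projective join in Proposition~\ref{prop:join}. First I would make precise the ``$\C$-linear retraction'' claim: a non-degenerate quadratic form $Q$ on $(\C^{d+1})^{m-1}$ that is the Hessian of a conical generating function of the $0$-section splits $(\C^{d+1})^{m-1}$ as $V^-\oplus V^+$ into its maximal negative and maximal positive $\C$-subspaces (non-degeneracy plus the fact that these forms are real parts of Hermitian forms, so the eigenspace decomposition is $\C$-linear). The map $(\mathbf{v}^-,\mathbf{v}^+)\mapsto(\mathbf{v}^-,s\mathbf{v}^+)$, $s\in[0,1]$, is a $\C$-linear deformation retraction of $\{Q\le 0\}$ onto $V^-$; it is $S^1$-equivariant and descends to a deformation retraction on the projectivizations, exhibiting $\{\widehat F_j\le 0\}$ up to homotopy as a projective join. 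Concretely, $\{F_j\le 0\}$ is, via $(\mathbf{u},\mathbf{v})\mapsto(\mathbf{u},A_j(\mathbf{u},\mathbf{v}))$, the set $\{(\mathbf{u},\mathbf{v}):g(\mathbf{u})+Q_j(\mathbf{v})\le 0\}$, which deformation retracts $\C$-linearly onto $\{g\le 0\}\times V_j^-$ \emph{wedged conically}; projectivizing, one gets $\widehat X * \CP(V_j^-)$ with $\widehat X=\{\widehat g\le 0\}$. One checks $\dim_\C V_0^-=k$ and, using Lemma~\ref{lem:indQ} ($\ind F_{\boldsymbol\delta_1}-\ind F_{\boldsymbol\delta_0}=2(d+1)$) together with $Q_j=F_{\boldsymbol\delta_j}(0,\cdot)$, that $\dim_\C V_1^-=k+d+1$; moreover $V_0^-\subset V_1^-$ by the previous lemma ($Q_1\le Q_0$ forces the inclusion of negative subspaces, after the $\C$-linear change $\mathbf v\mapsto\mathbf v+C(\mathbf u)$... — but note $C$ depends on $\mathbf u$, so here one uses instead the fixed linear inclusion of the $\mathbf v$-negative subspaces coming directly from $Q_1\le Q_0$). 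This identifies the $\CP^k\subset\CP^{k+d+1}$ as claimed and shows the horizontal maps $H^*(\widehat X*\CP^{k+j(d+1)})\to H^*(\{\widehat F_j\le 0\})$ are isomorphisms.

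Next I would chase the square. The top horizontal map is the isomorphism just built for $j=1$; the bottom one for $j=0$. The right vertical arrow is induced by the inclusion $\{\widehat F_0\le 0\}\hookrightarrow\{\widehat F_1\le 0\}$ (valid since $\widehat F_1\le\widehat F_0$). The left vertical arrow is induced by the inclusion of joins $\widehat X*\CP^k\hookrightarrow\widehat X*\CP^{k+d+1}$ coming from $\CP(V_0^-)\hookrightarrow\CP(V_1^-)$. Commutativity is the assertion that, after transporting by the $A_j$'s, the inclusion $\{\widehat F_0\le 0\}\hookrightarrow\{\widehat F_1\le 0\}$ becomes the join inclusion up to homotopy — and this is exactly what the preceding lemma supplies: the left-hand vertical arrow of diagram~(\ref{cd:stab}), $(\mathbf u,\mathbf v)\mapsto(\mathbf u,\mathbf v+C(\mathbf u))$, is homotopic to the inclusion $\{g\le0\}\times\{Q_0\le0\}\hookrightarrow\{g\le0\}\times\{Q_1\le0\}$, and composing with the $\C$-linear retractions onto the negative subspaces turns it into the standard join inclusion. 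So the diagram of the preceding lemma commutes on the nose, and applying $H^*$ and the retraction isomorphisms (which are natural for $S^1$-equivariant $\C$-linear maps) gives the displayed commutative square.

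Finally, for the punctured version I would repeat everything equivariantly relative to the point. The lifts $(\mathbf u^0,\mathbf v^j)$ of $\zeta_j$ satisfy $A_j(\mathbf u^0,0)=\mathbf v^j$, so in the split coordinates the critical point sits at $(\mathbf u^0,0)$ with $0\in V_j^-$ its negative slot; thus removing $[\mathbf u^0:\mathbf v^j]$ from $\{\widehat F_j\le0\}$ corresponds under $A_j$ to removing $[\mathbf u^0:0]$ from $\widehat X*\CP(V_j^-)$, and the $\C$-linear retraction $(\mathbf v^-,\mathbf v^+)\mapsto(\mathbf v^-,s\mathbf v^+)$ fixes this deleted point (it lies in the negative subspace), so it still retracts the punctured sublevel set onto the punctured join $\widehat X*\CP^{k+j(d+1)}\setminus[\mathbf u^0:0]$. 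All the maps in the square preserve the deleted point — the $A_j$ by construction, the inclusions tautologically, the homotopy of the preceding lemma because $C(\mathbf u^0)$... — here one must check that the homotopy $(\mathbf u,\mathbf v)\mapsto(\mathbf u,\mathbf v+sC(\mathbf u))$ respects the base point; since at the critical point the negative-subspace coordinate is $0$ and $C(\mathbf u^0)$ lands in $V_1^-$ with the right value, this is a direct verification. Hence the same commutative square holds with punctures, as asserted.

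\medskip

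The main obstacle I expect is the bookkeeping at the critical point in the punctured case: one must verify that the normalizing maps $A_j$ and the homotopy through $C$ really do send the marked point $\zeta_0$ ``the same way'' into the two joins, i.e. that $A_1(\mathbf u^0,0)$ and the image of $A_0(\mathbf u^0,0)$ under the join inclusion agree (both equal $[\mathbf u^0:0]$), and that the $\C$-linear retractions onto the negative subspaces fix this point — which is true precisely because $0$ lies in every $V_j^-$, but it needs the non-degeneracy of the $Q_j$ at the critical points, already noted before the lemma. The purely cohomological part (the join computations, the isomorphisms $f_A$, the naturality) is then routine given Proposition~\ref{prop:join}.
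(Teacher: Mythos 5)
Your outline correctly identifies the index computation ($\ind Q_1 - \ind Q_0 = 2(d+1)$ via Proposition~\ref{prop:stabilization} and Lemma~\ref{lem:indQ}), correctly handles commutativity via the homotopy $(\mathbf{u},\mathbf{v})\mapsto(\mathbf{u},\mathbf{v}+sC(\mathbf{u}))$ from the preceding lemma, and correctly tracks the marked point under $A_j$. But the central homotopy-equivalence step has a genuine gap.

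You claim that $\{g(\mathbf{u})+Q_j(\mathbf{v})\le 0\}$ deformation retracts $\C$-linearly onto $\{g\le 0\}\times V_j^-$. This is false as stated. The $\C$-linear deformation $(\mathbf{u},\mathbf{v}^-,\mathbf{v}^+)\mapsto(\mathbf{u},\mathbf{v}^-,s\mathbf{v}^+)$ does preserve the sublevel set (since $Q_j^+(s\mathbf{v}^+)=s^2Q_j^+(\mathbf{v}^+)\le Q_j^+(\mathbf{v}^+)$), but at $s=0$ it lands on $\{(\mathbf{u},\mathbf{v}^-):g(\mathbf{u})+Q_j^-(\mathbf{v}^-)\le 0\}$, which strictly contains $\{g\le 0\}\times V_j^-$: any $(\mathbf{u},\mathbf{v}^-)$ with $g(\mathbf{u})>0$ but $Q_j^-(\mathbf{v}^-)<-g(\mathbf{u})$ belongs to the former and not the latter. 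Collapsing the positive part of $Q_j$ gets you the conical set $\{g+Q_j^-\le 0\}$, \emph{not} the conical lift of the join $\widehat{X}*\CP(V_j^-)$. The further retraction of $\{\widehat{g+Q_j}\le 0\}$ onto $\{\widehat{g}\le 0\}*\{\widehat{Q_j}\le 0\}$ is the nontrivial topological input here; it is not $\C$-linear and does not follow from anything proved earlier in the paper. The paper's own proof is short precisely because it delegates this to Givental's Proposition~B.1 in \cite{Giv90}, which your proposal omits and tacitly replaces with the too-optimistic ``retracts $\C$-linearly onto $\{g\le 0\}\times V_j^-$ wedged conically.''

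A second, smaller issue: $Q_1\le Q_0$ does \emph{not} by itself give a ``fixed linear inclusion'' $V_0^-\subset V_1^-$ of the maximal negative $\C$-subspaces. It does give that $Q_1$ is negative definite on $V_0^-$, so $V_0^-$ is \emph{some} negative subspace of $Q_1$, and one must then \emph{choose} $V_1^-$ to be a maximal negative subspace extending $V_0^-$. Your parenthetical already signals confusion on this point; it should be stated as a choice, not a consequence.
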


\begin{proof}
    The variation of dimension between the maximal negative subspaces of $Q_1$
    and of $Q_0$ is $2(d+1)$ since
    \begin{equation*}
        \ind(Q_1)-\ind(Q_0) = \ind(F_{\boldsymbol{\delta}_1}) -
        \ind(F_{\boldsymbol{\delta}_0}) = 2(d+1),
    \end{equation*}
    according to Proposition~\ref{prop:stabilization} together with
    Lemma~\ref{lem:indQ}.
    We only need to prove that the horizontal maps are actually isomorphisms.
    The last statement will then go on the same lines since the horizontal arrows
    comes from maps sending the $[\mathbf{u}^0:0]$'s on the $[\mathbf{u}^0:\mathbf{v}^j]$.
    It boils down to the fact that $\left\{\widehat{g+Q_j}\leq 0\right\}$
    retracts on $\left\{\widehat{g}\leq 0\right\}*\left\{\widehat{Q_j}\leq 0\right\}$,
    which is proved by Givental in \cite[Proposition~B.1]{Giv90}.
\end{proof}

According to this lemma, regarding cohomology of sublevel sets, one can
essentially assume that $\{\widehat{F}_0\leq 0\} = X*\CP^{k+n+1}$ and
$\{\widehat{F}_1\leq 0\} = X*\CP^k$.

\begin{prop}\label{prop:lochomper}
    Let $z\in\CP^d$ be a fixed point of $\varphi$
    with action $t\in (-\varepsilon,\varepsilon)$,
    then
    \begin{equation*}
        \lochom^*(z,t+1) \simeq \lochom^{*-2(d+1)}(z,t).
    \end{equation*}
\end{prop}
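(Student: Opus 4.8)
The plan is to transport the statement about the germ of $\action$ at the two critical points $(t,\zeta_0)$ and $(t+1,\zeta_1)$ to a statement about local cohomology of the $C^1$ functions $\widehat F_t$ and $\widehat F_{t+1}$ on $\CP^N$, and then to apply the stabilization machinery of Lemma~\ref{lem:cdhomstab} together with the join computation of Proposition~\ref{prop:join}. Without loss of generality we may assume $t=0$, as in the paragraph preceding the statement. First I would use the isomorphism $\lochom^*(z,s)\simeq \lochom^*(\widehat F_s;\zeta)$ (established in Section~\ref{se:lochom} via an argument parallel to Lemma~\ref{lem:isocohom}) to reduce everything to computing local cohomology of $\widehat F_0$ at $\zeta_0$ and of $\widehat F_1$ at $\zeta_1$.

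Next I would invoke the setup already in place: by Lemma~\ref{lem:commonfactor} the lifts can be chosen with $\mathbf{u}^0=\mathbf{u}^1$, and by Proposition~\ref{prop:stabilization} there are $\C$-linear changes of variable $A_j(\mathbf{u},\mathbf{v})=\mathbf{v}+B_j(\mathbf{u})$ with $F_j(\mathbf{u},A_j(\mathbf{u},\mathbf{v})) = g(\mathbf{u}) + Q_j(\mathbf{v})$, where $g=F_{(\boldsymbol{\sigma},\id)}$ and $Q_j$ is nondegenerate with $\ind(Q_1)-\ind(Q_0)=2(d+1)$ (Lemma~\ref{lem:indQ}). Passing to the sphere and then to $\CP^N$, Lemma~\ref{lem:cdhomstab} gives that, near the critical point, one may replace $\{\widehat F_0\leq 0\}$ by $X*\CP^{k+n+1}$ and $\{\widehat F_1\leq 0\}$ by $X*\CP^{k}$, with $X=\{\widehat g\leq 0\}$, and — crucially — that the punctured versions $\{\widehat F_j\leq 0\}\setminus[\mathbf{u}^0:\mathbf{v}^j]$ correspond to $X*\CP^{k+j(d+1)}\setminus[\mathbf{u}^0:0]$, the deleted point lying on the $X$-factor (i.e.\ in $p_X^{-1}$ of a point and not on the $\CP^{k+j(d+1)}$-factor). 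So the local cohomology $\lochom^*(z,j)$ becomes $H^*\big(X*\CP^{k+j(d+1)},\, X*\CP^{k+j(d+1)}\setminus[\mathbf{u}^0:0]\big)$, and I must compare these two as $j$ goes from $0$ to $1$.

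To finish, I would excise: since the deleted point is a single point of $X*\CP^{k+j(d+1)}\setminus \CP^{k+j(d+1)}$ lying over a point $x_0\in X$ under $p_X$, and $p_X$ is a complex vector bundle of rank $k+j(d+1)+1$ (as in the proof of Proposition~\ref{prop:join}), a tubular-neighborhood/excision argument identifies the pair with a local model $(\text{fibre over }x_0,\ \text{fibre}\setminus 0)$ stabilized by the join with the projective factor. Concretely, the pair $\big(X*\CP^{r},\,X*\CP^{r}\setminus[\mathbf{u}^0:0]\big)$ excises down to $\big(B\times\C^{r+1},\ (B\times\C^{r+1})\setminus\{(x_0,0)\}\big)$ for a small ball $B$ around $x_0$ in $X$, whence by Künneth / the local cohomology of $\widehat g$ at $\mathbf{u}^0$ one gets $H^*\big(X*\CP^{r},\,\cdot\,\big)\simeq \lochom^{*-2(r+1)}(\widehat g;\mathbf{u}^0)$ up to a Thom shift. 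Comparing $r=k$ and $r=k+d+1$ and using that the vertical maps in Lemma~\ref{lem:cdhomstab} are compatible with these identifications yields $\lochom^*(z,1)\simeq \lochom^{*-2(d+1)}(z,0)$, as the extra factor $\C^{d+1}$ contributes precisely the degree shift $2(d+1)$.

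\textbf{Main obstacle.} The delicate point is not the degree bookkeeping but making rigorous the claim that the deleted critical point sits on the $X$-factor of the join and that excision may be performed "fiberwise" in the bundle $p_X$ while respecting the retractions of Lemma~\ref{lem:cdhomstab}; that is, one must check that the homotopies retracting $\{\widehat g+\widehat Q_j\leq 0\}$ onto $X*\{\widehat Q_j\leq 0\}$ (Givental, \cite[Proposition~B.1]{Giv90}) can be taken to fix a neighborhood of the critical point, or at least to move it within a contractible set not meeting the projective factor, so that the local cohomology is genuinely read off from the vector-bundle local model. Once that compatibility is nailed down, the conclusion is a direct Thom-isomorphism computation.
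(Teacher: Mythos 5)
Your proposal is essentially the paper's own argument for the case $t=0$: identify $\lochom^*(z,j)$ with $H^*\big(X*\CP^{k+j(d+1)},\,X*\CP^{k+j(d+1)}\setminus[\mathbf{u}^0:0]\big)$ via Lemma~\ref{lem:cdhomstab}, then excise to a trivialization neighborhood $U\subset X$ of $[\mathbf{u}^0]$ in the rank-$(k+1+j(d+1))$ complex bundle $X*\CP^{k+j(d+1)}\setminus\CP^{k+j(d+1)}\to X$, and read off the $2(d+1)$ degree shift from Künneth applied to the two values $j=0,1$. The compatibility worry you isolate at the end — that Givental's retraction of $\{\widehat g+\widehat Q_j\leq 0\}$ onto the join must carry the punctured pair correctly — is exactly what the final sentence of Lemma~\ref{lem:cdhomstab} asserts and its proof addresses (the horizontal arrows send $[\mathbf{u}^0:0]$ to $[\mathbf{u}^0:\mathbf{v}^j]$), so you are right to flag it but it is already in place.

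The one genuine gap is the phrase \emph{``without loss of generality we may assume $t=0$.''} The generating family $F_t=F_{(\boldsymbol{\sigma},\boldsymbol{\delta}_t)}$ is not of the stabilized form $F_{(\cdot,\boldsymbol{\delta}_0)}$ at general $t$, and $F_{t+1}$ is not obtained from $F_t$ by just appending $\boldsymbol{\delta}_1$; the whole stabilization machinery (Lemmas~\ref{lem:commonfactor}, \ref{lem:cdhomstab}) was set up in the paragraph preceding the statement \emph{under the hypothesis} $t=0$. To reduce a fixed point of action $t\neq 0$ to this situation, the paper uses Lemma~\ref{lem:isotopy}: since $s\mapsto(\boldsymbol{\sigma}_1',\boldsymbol{\delta}_{st},\boldsymbol{\delta}_{(1-s)t})$ is a family of tuples for the constant diffeomorphism $e^{-2i\pi t}\Phi$, there is a fibered conical isotopy $(B_s)$ conjugating $F_t$ to $G_0:=F_{(\boldsymbol{\sigma}_1',\boldsymbol{\delta}_t,\boldsymbol{\delta}_0)}$, and similarly $F_{t+1}$ to $G_1:=F_{(\boldsymbol{\sigma}_1',\boldsymbol{\delta}_t,\boldsymbol{\delta}_1)}$, after which the family $(G_s)$ has the common-factor/stabilized structure and the $t=0$ argument applies. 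This conjugacy step is not a harmless normalization; you should supply it (or at least cite Lemma~\ref{lem:isotopy} and explain how it puts the two generating functions in comparable form) rather than treat it as a WLOG.
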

\begin{proof}
    Let us first assume that $t=0$.
    Since we are in the hypothesis of Lemma~\ref{lem:cdhomstab},
    we keep the same notations with $j\in\{0;1\}$.
    The changes of coordinate $[\mathbf{u}:\mathbf{v}]\mapsto 
    [\mathbf{u}:A_j(\mathbf{u},\mathbf{v})]$ induce isomorphisms
    in local cohomologies:
    \begin{equation}\label{eq:excisioncover}
        \lochom^*(z,j) \simeq H^*
        \left(X *\CP^{k+j(d+1)}, X*\CP^{k+j(d+1)}\setminus [\mathbf{u}^0:0]\right).
    \end{equation}
    We recall that, according to the proof of Proposition~\ref{prop:join},
    $X*\CP^{k+j(d+1)}\setminus \CP^{k+j(d+1)}$ is a complex fiber bundle
    of fibers $\C^{k+1+j(d+1)}$.
    Let $U\subset X$ be an open set of
    trivialization containing $[\mathbf{u}^0]\in X$.
    Then by excision,
    \begin{equation*}
        \lochom^*(z,j) \simeq H^*\left(U\times\C^{k+1+j(d+1)}, U\times\C^{k+1+j(d+1)}\setminus
        ([\mathbf{u}^0],0)\right),
    \end{equation*}
    and Künneth formula gives
    \begin{equation}\label{eq:Klochom}
        \lochom^*(z,j) \simeq H^{*-2(k+1+j(d+1))}\left(U,U\setminus [\mathbf{u}^0]\right),
    \end{equation}
    which concludes.

    Let us assume that $t\in (-\varepsilon,\varepsilon)$.
    We recall that $\boldsymbol{\sigma}_1 = 
    (\boldsymbol{\sigma}_1',\boldsymbol{\delta}_0)$,
    thus $s\mapsto (\boldsymbol{\sigma}_1',\boldsymbol{\delta}_{st},
    \boldsymbol{\delta}_{(1-s)t})$
    is a continuous family of tuples associated to the constant
    flow $e^{-2i\pi t}\Phi$ starting at 
    $(\boldsymbol{\sigma},\boldsymbol{\delta}_t)$.
    Therefore,
    according to Lemma~\ref{lem:isotopy},
    there exists an isotopy $(B_s)$ with $B_0=\id$ and
    $F_{(\boldsymbol{\sigma'}_1,\boldsymbol{\delta}_t,\boldsymbol{\delta}_0)}\circ
    B_1 = F_t$.
    Let $G_s := F_{(\boldsymbol{\sigma'}_1,\boldsymbol{\delta}_t,\boldsymbol{\delta}_s)}$,
    then we have $G_0\circ B_1 = F_t$ and, by the same way,
    we have $G_1\circ B_1' = F_{t+1}$ for another isotopy $(B'_s)$.
    The proof goes on the same lines as before replacing $(F_s)$
    by $(G_s)$.
\end{proof}

We want to elaborate on this local statement when the subgroup
$\lochom^*(z,t)$ ``persists in the action window $(t,t+1]$''.
Let $z\in\CP^d$ be a fixed point of $\varphi$ with action
$t\in (-\varepsilon,\varepsilon)$.
By excision,
we have an isomorphism
\begin{equation}\label{eq:locexcision}
    H^*(\{\action\leq t\},\{\action < t\}) \simeq
    \bigoplus_{i} \lochom^*(z_i,t),
\end{equation}
where $z_1=z,z_2,z_3,\ldots$ is the finite family of fixed points
of $\varphi$ with action $t$.
We will make use of the following maps induced by inclusions:
\begin{equation*}
    j_1^*: H^*(\{\action\leq t+1\},\{\action <t\}) \to
    H^*(\{\action\leq t\},\{\action <t\})
\end{equation*}
and
\begin{equation*}
    j_2^*: H^*(\{\action\leq t+1\},\{\action <t+1\}) \to
    H^*(\{\action\leq t+1\},\{\action <t\}).
\end{equation*}
We recall that $u^{d+1}\in H^{2(d+1)}(\CP^N)$ acts on
these relative cohomology groups by the cup-product,
identifying $H^*(\CP^N)$ with $H^*(I\times\CP^N)$
\emph{via} the projection $I\times\CP^N\to\CP^N$.

\begin{prop}\label{prop:globallochom}
    Let $z\in\CP^d$ be a fixed point of $\varphi$ with zero action
     such that 
    there exists a subgroup $G(z)$ of $H^*(\{\action \leq 1\},
    \{\action<0\})$ whose image $j_1^*G(z)$ is $\lochom^*(z,0)$
    under the identification (\ref{eq:locexcision}).
    Then $\lochom^*(z,1)$ trivially intersects $\ker j_2^*$ and
    we have the isomorphism
    \begin{equation*}
        \lochom^*(z,1) \simeq j_2^*\lochom^*(z,1) =
        u^{d+1} G(z).
    \end{equation*}
\end{prop}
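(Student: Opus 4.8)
The plan is to prove the statement for $z$ of zero action; the general case $t\in(-\varepsilon,\varepsilon)$ then follows by the isotopy substitution $(F_s)\leadsto(G_s)=(F_{(\boldsymbol{\sigma}'_1,\boldsymbol{\delta}_t,\boldsymbol{\delta}_s)})$ used at the end of the proof of Proposition~\ref{prop:lochomper}, which leaves $\action$, its sublevel sets and the maps $j_1^*,j_2^*$ unchanged up to isotopy. So assume $z$ has action $0$ and abbreviate $L:=\{\action\le1\}$, $P:=\{\action<1\}$, $K:=\{\action\le0\}$, $N:=\{\action<0\}$, so that $N\subseteq K\subseteq P\subseteq L$. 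Then $j_1^*\colon H^*(L,N)\to H^*(K,N)$ sits in the long exact sequence of the triple $(L,K,N)$, the map $j_2^*\colon H^*(L,P)\to H^*(L,N)$ in that of $(L,P,N)$, and one has $j_1^*=r\circ\rho$ with $\rho\colon H^*(L,N)\to H^*(P,N)$ and $r\colon H^*(P,N)\to H^*(K,N)$ restrictions; in particular $\im j_2^*=\ker\rho$ and $\ker j_2^*=\im\bigl(\partial\colon H^{*-1}(P,N)\to H^*(L,P)\bigr)$. Recall from~(\ref{eq:locexcision}) the decompositions $H^*(K,N)\cong\bigoplus_i\lochom^*(z_i,0)$ and $H^*(L,P)\cong\bigoplus_i\lochom^*(z_i,1)$, with $\lochom^*(z,0)=j_1^*G(z)$.

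The centerpiece of the argument is the identity
\[
\smile u^{d+1}\;=\;j_2^*\circ\theta\circ j_1^*\qquad\text{on }H^*(L,N),
\]
where $\theta=\bigoplus_i\theta_i$ and $\theta_i\colon\lochom^*(z_i,0)\xrightarrow{\ \sim\ }\lochom^{*+2(d+1)}(z_i,1)$ is the recapping isomorphism of Proposition~\ref{prop:lochomper}; this is the precise form of the ``$\mathrm{q}$-operator $=\ \smile u^{d+1}$'' principle. I would prove it in two steps. First, $\smile u^{d+1}$ annihilates $H^*(K,N)$: cup-product by the global class $u^{d+1}$ is natural for the inclusions of the local pairs computing the summands, hence acts on each $\lochom^*(z_i,0)$ by raising degree by $2(d+1)$, while by~(\ref{eq:suppind}) $\supp\lochom^*(z_i,0)$ lies in an interval of length $\nu(z_i)\le 2d<2(d+1)$; so $\smile u^{d+1}$ vanishes on $\ker j_1^*$ and factors as $\phi\circ j_1^*$ for a unique $\phi\colon\bigoplus_i\lochom^*(z_i,0)\to H^{*+2(d+1)}(L,N)$. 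Second, I would identify $\phi$ geometrically: using Proposition~\ref{prop:stabilization} (with $\boldsymbol{\delta}=\boldsymbol{\delta}_0$ and $\boldsymbol{\delta}_1$) together with Givental's retraction $\{\widehat{g+Q}\le0\}\simeq\{\widehat g\le0\}*\{\widehat Q\le0\}$ as in Lemma~\ref{lem:cdhomstab}, one realizes $L$ and $K$ (and, via Lemma~\ref{lem:isocohom} and the strict-inequality version, also $P$ and $N$, as well as these spaces with the relevant critical points deleted) as projective joins $X*\CP^{k'}$, $X*\CP^{k}$ with $k'=k+d+1$, $\CP^k\subseteq\CP^{k'}$ projectively embedded in $\CP^N$, compatibly with the inclusions; under these identifications Proposition~\ref{prop:join} and the identification of $u^{k+1}$ with the Thom class (proof of Corollary~\ref{cor:homlength}) show that $\phi$ is Thom-class multiplication raising the $\CP$-grading from $k$ to $k'$, which after the excision and K\"unneth isomorphisms underlying Proposition~\ref{prop:lochomper} is exactly $j_2^*\circ\theta$, and is diagonal with respect to the fixed-point index.

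Granting the identity, the two assertions follow. Since $\smile u^{d+1}=\phi\circ j_1^*$, the subgroup $u^{d+1}G(z)$ depends only on $j_1^*G(z)=\lochom^*(z,0)$, and $u^{d+1}G(z)=\phi(\lochom^*(z,0))=j_2^*\bigl(\theta_z(\lochom^*(z,0))\bigr)=j_2^*\bigl(\lochom^*(z,1)\bigr)$, where $\theta_z$ maps the $z$-summand $\lochom^*(z,0)$ isomorphically onto the $z$-summand $\lochom^*(z,1)$. It remains to check that $j_2^*$ is injective on $\lochom^*(z,1)$, i.e.\ that $\lochom^*(z,1)\cap\im\partial=0$: here one uses that, by~(\ref{eq:suppind}) and Lemma~\ref{lem:indQ}, $\lochom^*(z,1)$ is concentrated in degrees $2(d+1)$ above those of $\lochom^*(z,0)$, whereas the part of $H^*(P,N)$ that $\partial$ can carry into the $z$-summand comes, through the excision of Lemma~\ref{lem:isocohom} and the join description, only from the level-$\le 0$ data, which lies too low by the same $2(d+1)$ shift; hence that intersection vanishes. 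Combining, $\lochom^*(z,1)\cong j_2^*\lochom^*(z,1)=u^{d+1}G(z)$ and $\lochom^*(z,1)\cap\ker j_2^*=0$, as claimed.

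The main obstacle is the second paragraph: faithfully transporting cup-product by $u^{d+1}$ through the two successive layers of identifications — the global stabilization $F_j\sim g+Q_j$ with its projective-join picture and the excision/K\"unneth isomorphisms underlying Proposition~\ref{prop:lochomper} — so as to see that it coincides with $j_2^*\circ\theta$ and is diagonal in the fixed-point summands; the degree bookkeeping needed for the injectivity in the third paragraph (controlling where $\partial$ can hit the $z$-summand) is of the same flavour. Everything else is a routine chase in the long exact sequences of the triples $(L,K,N)$ and $(L,P,N)$ together with the support bound~(\ref{eq:suppind}).
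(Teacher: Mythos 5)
Your geometric core—realizing the recapping isomorphism $\theta$ of Proposition~\ref{prop:lochomper} as a Thom-class cup product via Proposition~\ref{prop:stabilization}, Lemma~\ref{lem:cdhomstab}, the join description of Proposition~\ref{prop:join}, and the excision/K\"unneth identifications—is exactly the paper's. The problem is your first step, the claimed factorization $\smile u^{d+1}=\phi\circ j_1^*$. The support bound~(\ref{eq:suppind}) does show that $\smile u^{d+1}$ annihilates $H^*(K,N)\cong\bigoplus_i\lochom^*(z_i,0)$; by naturality this gives $j_1^*(u^{d+1}\smile a)=u^{d+1}\smile j_1^*a=0$, i.e.\ that the \emph{image} of $\smile u^{d+1}$ on $H^*(L,N)$ lies inside $\ker j_1^*$. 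But that is not what you need: to factor through $j_1^*$ you need $\ker j_1^*\subset\ker(\smile u^{d+1})$, an entirely different inclusion. Here $\ker j_1^*=\im\bigl(H^*(L,K)\to H^*(L,N)\bigr)$, and nothing in your argument controls the image of $u^{d+1}\smile H^*(L,K)$ in $H^*(L,N)$. The sentence "so $\smile u^{d+1}$ vanishes on $\ker j_1^*$ and factors as $\phi\circ j_1^*$" is therefore a non sequitur, and since your whole identity $\smile u^{d+1}=j_2^*\circ\theta\circ j_1^*$ rests on it, the proof does not close as written.

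The paper never claims such a factorization. It instead writes down the commutative square with top edge $u^{d+1}\smile\cdot$, bottom edge $\theta\colon H^{*-2(d+1)}(M^{\leq 0},M^{<0})\xrightarrow{\ \sim\ }H^*(M^{\leq 1},M^{<1})$ (using the punctured pairs $M^{<j}:=M^{\leq j}\setminus\{(j,\zeta_j)\}$ rather than $\{\action<j\}$), and inclusion-induced verticals, and checks its commutativity: the verticals are natural for cup product, and $\theta$ is itself cup with the Thom class, which by Proposition~\ref{prop:join} is a restriction of $u^{k+1+j(d+1)}$. The proposition is then read off from the square. Your second paragraph is essentially this commutativity argument, so the fix is to drop the factorization premise and verify the diagram directly. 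I would also flag that your degree-bookkeeping sketch for $\lochom^*(z,1)\cap\ker j_2^*=0$—that $\partial$ hits the $z$-summand only via "level-$\leq 0$ data"—is not clearly justified, since $H^*(P,N)$ also carries contributions from critical levels in $(0,1)$ whose supports need not sit $2(d+1)$ below $\lochom^*(z,1)$; the paper is admittedly terse on this point too.
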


\begin{proof}
    In order to simplify notations,
    let $M^{\leq j} := \{\action\leq j\}$ and
    $M^{< j} := M^{\leq j}\setminus [\mathbf{u}^0:\mathbf{v}^j]$ where $j\in\{0;1\}$.
    The proposition is then a direct consequence of the commutation of the following
    diagram:
    \begin{equation*}
        \xymatrixcolsep{5pc}
        \xymatrix{
            H^{*-2(d+1)}(M^{\leq 1},M^{<0}) \ar[r]^-{u^{d+1}\smile\cdot}
            \ar[d] & H^*(M^{\leq 1},M^{<0}) \\
            H^{*-2(d+1)}(M^{\leq 0},M^{<0}) \ar[r]^-{\simeq} &
            H^*(M^{\leq 1},M^{<1}) \ar[u] 
        }
    \end{equation*}
    where the vertical arrows are induced by inclusion and the bottom one
    is constructed as follows.
    By definition $H^*(M^{\leq j},M^{<j})=\lochom^*(z,j)$.
    In the last proof, we have seen that excision gives
    (\ref{eq:excisioncover}) after identifying $M^{\leq j}$
    with $X *\CP^{k+j(d+1)}$ and taking a trivialization neighborhood
    $U\subset X$ of the induced covering.
    At last, we had an isomorphism (\ref{eq:Klochom}) between local cohomologies
    $\lochom^*(z,j)$ and $H^*(U,U\setminus [\mathbf{u}^0])$ with a shift in degrees.
    This last isomorphism is in fact induced by the cup-product of
    an element of $H^*(U,U\setminus [\mathbf{u}^0])$ with the restriction of
    $u^{k+d+1}$ since the Thom class of the fiber bundle covering $X$
    is also a restriction of $u^{k+d+1}$ according to Proposition~\ref{prop:join}.
    Thus, the cup-product by $u^{d+1}$ makes explicit the isomorphism
    $H^*(M^{\leq 1},M^{<1})\xrightarrow{\simeq} H^{*-2(d+1)}(M^{\leq 0},M^{<0})$
    (after applying a suitable excision),
    so that the commutativity of the diagram follows.
\end{proof}

\subsection{Iteration properties of $\action$}

Given a $n$-tuple $\boldsymbol{\sigma}$ and a integer $m\geq 0$,
we denote by $\boldsymbol{\sigma}^m$ the $mn$-tuple
$(\boldsymbol{\sigma},\ldots,\boldsymbol{\sigma})$.
For $m\geq 1$,
let $(\boldsymbol{\sigma}^{(m)}_s)$ be the family of tuples of $(\Phi_{ms})$
defined by
\begin{equation*}
    \boldsymbol{\sigma}^{(m)}_{k/m+s} = \left( \boldsymbol{\sigma}_1^k,
    \boldsymbol{\sigma}_s, \boldsymbol{\sigma}^{m-k-1}_0\right),\quad \forall s\in[0,1/m],
    0\leq k\leq m-1.
\end{equation*}
Let $F^m_{s,t} := F_{(\boldsymbol{\sigma}_s^{(m)},\boldsymbol{\delta}_t)}$ be
the induced generating family
of $e^{-2i\pi t}\Phi_{ms}$.
We denote by 
\begin{equation*}
    M_m :=
    \left\{ (t,[\zeta])\in (-\varepsilon,1 + \varepsilon)\times
    \CP^{N(m)} \ |\ F^m_{1,t}(\zeta) = 0 \right\},\quad 
    \action^m : M_m \to (-\varepsilon,1+\varepsilon)
\end{equation*}
the discrete action associated to $\Phi^m$ \emph{via}
the generating family $F^m_{1,t}:\C^{N(m)+1}\to\R$.

According to Proposition~\ref{prop:lochomper},
in order to study the local cohomology groups of $\action$
and $\action^m$, it is enough to consider points of value in $[0,1)$.
Let $y\in\CP^{d}$ be a fixed point of $\varphi$
and $t(y)\in [0,1)$ be uniquely defined by
$\Phi(\tilde{y}) = e^{2i\pi t(y)}\tilde{y}$,
where $\pi(\tilde{y})=y$.
We define the \emph{index} of the fixed point $y$
by $\ind(y) := \ind(\zeta,F_{1,t(y)})$.
We extend these definitions to the iterated diffeomorphism $\varphi^m$
the following way:
if $y\in\CP^d$ is a fixed point of $\varphi$,
$t(y^m)$ denotes the
only $t\in [0,1)$ such that
$\Phi^m(\tilde{y}) = e^{2i\pi t}\tilde{y}$,
hence satisfies
\begin{equation*}
    t(y^m) = mt(y) - \lfloor mt(y) \rfloor.
\end{equation*}
The same way, the $m$-iterated index of $y$ designates the integer
\begin{equation*}
    \ind (y^m) := \ind \left(\zeta, F^m_{1,t}\right),
\end{equation*}
for some critical point $\zeta$ associated to the fixed point $\tilde{y}$
of the diffeomorphism $e^{-2i\pi t(y^m)}\Phi^m$.

According to (\ref{eq:suppind}), 
\begin{equation*}
    \supp\lochom^*(y^m,t(y^m))\subset \left[ \ind(y^m),\ind(y^m)+\nu(y^m)\right].
\end{equation*}
By definition of the Maslov index,
\begin{equation*}
    \ind(y^m) =
    \mas\left(\tilde{y},\left(e^{-2i\pi t(y^m)s}\Phi_{ms}\right)\right) 
    + i(m),
\end{equation*}
where $i(m):=\ind(F^m_{0,0})$ only depends on $m\in\N^*$.
Thus, according to Bott iteration inequalities (\ref{eq:mmasequiv}),
\begin{equation}\label{eq:suppmind}
    \supp\lochom^*(y^m,t(y^m)) \subset
    \left[ \mind (y^m) - d, \mind (y^m) +d
    \right],
\end{equation}
where $\mind(y^m) := i(m)+ \mmas(\tilde{y},(e^{-2i\pi t(y^m)s}\Phi_{ms}))$.

\begin{lem}\label{lem:indmas}
    Let $y\in\CP^d$ be a fixed point of $\varphi$, then
    \begin{equation*}
        \mind(y^m) =
        m\mmas\left(\tilde{y},\left(e^{-2i\pi t(y)s}\Phi_{s}\right) \right)
        - 2(d+1)\lfloor mt(y) \rfloor + i(m),\quad
        \forall m\in\N^*.
    \end{equation*}
\end{lem}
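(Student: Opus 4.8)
The plan is to reduce everything to a computation with two explicit paths of symplectic matrices. Write $t:=t(y)$ and $\ell:=\lfloor mt\rfloor$, so that $t(y^m)=mt-\ell$. By definition $\mind(y^m)=i(m)+\mmas\bigl(\tilde{y},(e^{-2i\pi t(y^m)s}\Phi_{ms})\bigr)$, so the asserted formula is equivalent to
\begin{equation*}
    \mmas\bigl(\tilde{y},(e^{-2i\pi t(y^m)s}\Phi_{ms})\bigr)
    = m\,\mmas\bigl(\tilde{y},(e^{-2i\pi ts}\Phi_{s})\bigr)-2(d+1)\ell .
\end{equation*}
First I would set $\alpha(s):=\ud\Phi_s(\tilde{y})\in\Symp(2(d+1))$ for $s\geq 0$. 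Since $\Phi$ is $\C$-equivariant, each $\ud\Phi_s$ is $\C$-linear, hence commutes with every complex scalar; combined with $\Phi(\tilde{y})=e^{2i\pi t}\tilde{y}$ (so that $\Phi^k(\tilde{y})=e^{2i\pi kt}\tilde{y}$ and therefore $\ud\Phi_r(e^{2i\pi kt}\tilde{y})=\ud\Phi_r(\tilde{y})$), the chain rule gives $\alpha(k+r)=\alpha(r)\alpha(1)^k$ for all integers $k\geq 0$ and all $r\geq 0$. In particular $\alpha(0)=I$, $\alpha(k)=\alpha(1)^k$, and the path $s\in[0,1]\mapsto\ud\Phi_{ms}(\tilde{y})$ of the $m$-fold iterate equals $s\mapsto\alpha(ms)$.

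Next, consider the symplectic paths $R_1(s):=e^{-2i\pi ts}\alpha(s)$ and $R_m(s):=e^{-2i\pi(mt-\ell)s}\alpha(ms)$, defined for all $s\geq 0$. Using the relation for $\alpha$ together with the fact that scalars commute with the $\alpha$'s, both $R_1$ and $R_m$ satisfy $R(0)=I$ and $R(s+k)=R(s)R(1)^k$, so Theorem~\ref{thm:mmas} applies to them; moreover they agree on $[0,1]$ with $s\mapsto\ud(e^{-2i\pi ts}\Phi_s)(\tilde{y})$, resp. $s\mapsto\ud(e^{-2i\pi t(y^m)s}\Phi_{ms})(\tilde{y})$, hence are the natural extensions defining the two average indices, i.e. $\mmas(R_1)=\mmas(\tilde{y},(e^{-2i\pi ts}\Phi_s))$ and $\mmas(R_m)=\mmas(\tilde{y},(e^{-2i\pi t(y^m)s}\Phi_{ms}))$. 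The crucial observation is the pointwise identity, valid for all $s\geq 0$,
\begin{equation*}
    R_m(s)=e^{2i\pi\ell s}\,R_1(ms),
\end{equation*}
which is immediate from $mt-(mt-\ell)=\ell$; here $e^{2i\pi\ell s}$ denotes multiplication by that scalar on $\C^{d+1}$, seen in $\Symp(2(d+1))$.

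Finally I would compute Maslov indices. Fix $k\geq 1$. Evaluating the identity above at $ks$ displays the path $s\in[0,1]\mapsto R_m(ks)$ as the pointwise product of the loop $L(s):=e^{2i\pi\ell ks}I_{\C^{d+1}}$ (a loop at $I$, since $\ell k\in\Z$) with the path $s\mapsto R_1(mks)$. The Maslov index of a path multiplied pointwise by a loop is the sum of the two indices: in a topological group a loop-times-path is homotopic rel endpoints to the corresponding concatenation, so this follows from Proposition~\ref{prop:maspath}~(\ref{it:homot}) and (\ref{it:comp}). Since $L$ is, up to reparametrization, the $\ell k$-fold iterate of $s\mapsto e^{2i\pi s}I_{\C^{d+1}}$, Proposition~\ref{prop:maspath}~(\ref{it:sum}), (\ref{it:rot}) and (\ref{it:comp}) give $\mas(L)=-2(d+1)\ell k$. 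Hence
\begin{equation*}
    \mas\bigl(R_m(ks),\,s\in[0,1]\bigr)
    = -2(d+1)\ell k + \mas\bigl(R_1(mks),\,s\in[0,1]\bigr).
\end{equation*}
Dividing by $k$ and letting $k\to\infty$, the left side tends to $\mmas(R_m)$, the first term on the right is constant, and $\tfrac1k\mas\bigl(R_1(mks),s\in[0,1]\bigr)=m\cdot\tfrac{1}{mk}\mas\bigl(R_1(mks),s\in[0,1]\bigr)\to m\,\mmas(R_1)$ by definition of the average Maslov index. This yields $\mmas(R_m)=m\,\mmas(R_1)-2(d+1)\ell$, and adding $i(m)$ gives the lemma.

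The main obstacle is bookkeeping rather than any single hard estimate: correctly identifying $s\mapsto\Phi_{ms}$ with the iterated flow and verifying $\alpha(k+r)=\alpha(r)\alpha(1)^k$ (the one place where $\C$-equivariance of $\Phi$ is genuinely needed), isolating the twisting factor $e^{2i\pi\lfloor mt(y)\rfloor s}$, and applying the ``twist by a loop'' formula on the right parametrizing intervals. Once the identity $R_m(s)=e^{2i\pi\lfloor mt(y)\rfloor s}R_1(ms)$ is established, the remaining steps are routine applications of the properties of $\mas$ and $\mmas$ recalled in Section~\ref{se:mas}.
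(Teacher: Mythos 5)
Your overall strategy coincides with the paper's: split off the integer‑winding rotation as a loop at the identity, apply the loop‑times‑path additivity of $\mas$, then divide by $k$ to pass to $\mmas$. The key identity $R_m(s)=e^{2i\pi\lfloor mt(y)\rfloor s}R_1(ms)$ and the application of Proposition~\ref{prop:maspath}~(\ref{it:homot}), (\ref{it:comp}), (\ref{it:sum}), (\ref{it:rot}) are all correct and lead to the stated formula with the right signs.

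There is however a genuine error in the intermediate justification. You claim that $\C$-equivariance of $\Phi_s$ forces $\ud\Phi_s(z)$ to be $\C$-linear, and you use this both to get $\ud\Phi_r(e^{2i\pi kt}\tilde{y})=\ud\Phi_r(\tilde{y})$ and to establish $\alpha(k+r)=\alpha(r)\alpha(1)^k$, and again for $R(s+k)=R(s)R(1)^k$. This is false: differentiating $\Phi_s(e^{i\theta}z)=e^{i\theta}\Phi_s(z)$ only gives the conjugation identity $\ud\Phi_s(e^{i\theta}z)=e^{i\theta}\,\ud\Phi_s(z)\,e^{-i\theta}$, not $\ud\Phi_s(e^{i\theta}z)=\ud\Phi_s(z)$. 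Indeed, by the block description in the proof of Proposition~\ref{prop:maslift}, the restriction of $\ud\Phi_s(\tilde{y})$ to $(\C\tilde{y})^\perp$ is conjugate to $\ud\varphi_s(y)$, which is a generic symplectic map and generically not $\C$-linear. Consequently $\alpha(k+r)=\alpha(r)\alpha(1)^k$ is wrong whenever $t(y)\notin\Z$ (one gets instead a conjugated version), so as written your verification that $R_1,R_m$ satisfy the hypotheses of Theorem~\ref{thm:mmas} does not go through.

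The good news is that the needed relation $R_j(s+k)=R_j(s)R_j(1)^k$ is nonetheless true, for a different and cleaner reason, and that is the reason the paper implicitly uses: set $\Psi_s:=e^{-2i\pi ts}\Phi_s$ and $\Psi^m_s:=e^{-2i\pi t(y^m)s}\Phi_{ms}$. These are genuine Hamiltonian flows satisfying $\Psi_{s+k}=\Psi_s\Psi_1^k$ (because $\Phi_{s+k}=\Phi_s\Phi_1^k$, scalar multiplication commutes with $\Phi_s$ by $\C$-equivariance, and $e^{-2i\pi t(s+k)}=e^{-2i\pi ts}e^{-2i\pi tk}$), and $\tilde{y}$ is a \emph{genuine} fixed point of $\Psi_1$ (resp.\ $\Psi^m_1$). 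Therefore the chain rule gives directly $\ud\Psi_{s+k}(\tilde{y})=\ud\Psi_s(\Psi_1^k\tilde{y})\,\ud\Psi_1^k(\tilde{y})=\ud\Psi_s(\tilde{y})\,\ud\Psi_1(\tilde{y})^k$, i.e.\ $R_1(s+k)=R_1(s)R_1(1)^k$, and similarly for $R_m$, without any $\C$-linearity of the differentials. If you replace the $\alpha$‑based justification by this one-line chain-rule argument, your proof is correct and matches the paper's.
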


\begin{proof}
    Let $y\in\CP^d$ be fixed by $\varphi$, $m\in\N^*$ and $k\in\N^*$.
    In $\textup{Sp}(2(d+1))$, the path
    $s\mapsto \ud\left(e^{-2i\pi kmt(y)s}\Phi_{kms}\right)(\tilde{y})$
    is homotopic relative to endpoints to the concatenation of
    the path $s\mapsto \ud\left(e^{-2i\pi kt(y^m)s}\Phi_{kms}\right)(\tilde{y})$
    and the loop $\Gamma :s\mapsto e^{-2i\pi k\lfloor mt(y)\rfloor s }$,
    thus Proposition~\ref{prop:maspath} (\ref{it:comp}) and (\ref{it:homot})
    implies
    \begin{equation*}
        \mas\left(\tilde{y},\left(e^{-2i\pi kmt(y)s}\Phi_{kms}\right) \right) =
        \mas\left(\tilde{y}, \left(e^{-2i\pi kt(y^m)s}\Phi_{kms}\right)\right) +
        \mas(\Gamma),
    \end{equation*}
    According to Proposition~\ref{prop:maspath},
    $\mas(\Gamma) = 2(d+1)k\lfloor mt(y)\rfloor$,
    thus, dividing by $k$ and letting $k\to\infty$, we get
    \begin{equation*}
        \mmas\left(\tilde{y},\left(e^{-2i\pi mt(y)s}\Phi_{ms}\right) \right) =
        \mmas\left(\tilde{y}, \left(e^{-2i\pi t(y^m)s}\Phi_{ms}\right)\right) +
        2(d+1)\lfloor mt(y)\rfloor.
    \end{equation*}
\end{proof}

\subsection{Remarks on the parallel with Floer homology}
\label{se:discussion}
Our construction of the cohomology groups
of the sublevel sets depends \emph{a priori} on the choice of the tuple
$\boldsymbol{\sigma}$ of the Hamiltonian diffeomorphism $\Phi$.
We discuss uniqueness properties of this group and 
links with Floer homology without complete proofs,
as it will not be necessary for this paper.
Given a $\C$-equivariant flow $(\Phi_s)$
and real numbers $a<b$ not in the action spectrum of $\varphi$,
we can define $F_{s,t}$ generating $e^{-2i\pi t}\Phi_s$
for $s\in[0,1]$ and $t\in I$ where $I$ is an interval 
containing some $a'<a$ and $b'>b$, the same way as before.
Let $i_0 := \ind(F_{0,0})$, we define the cohomology group
\begin{equation*}
    G_{(a,b)}^*((\Phi_s)) :=
    H^{*-i_0}(\{\action\leq b\},\{\action\leq a\}).
\end{equation*}
By using Lemma~\ref{lem:isotopy} together with
Lemma~\ref{lem:quadidentity}, one can show that
this group does not depend on the specific choice of $(\boldsymbol{\sigma}_s)$
associated to the Hamiltonian flow $(\Phi_s)$.
Moreover, one can show that it only depends on the homotopy class
of Hamiltonian path $(\Phi_s)$ relative to endpoints
the same way Théret showed it for its rotation numbers
in \cite[Prop.~5.7]{The98}.
We can go a little further if we normalize the Hamiltonian
map $(h_s)$ of the flow $(\varphi_s)$ so that for instance
\begin{equation*}
    \int_{\CP^d} h_s \omega^d = 0,\quad\forall s\in[0,1].
\end{equation*}
Then $(\Phi_s)$ is uniquely determined by $(\varphi_s)$
and the group $G_{(a,b)}^*((\Phi_s))$ only
depends on the homotopy class of $(\varphi_s)$ relative to endpoints,
that is the choice of a lift $\widetilde{\varphi}\in\widetilde{\ham}(\CP^d)$
of $\varphi$ to the universal cover of $\ham(\CP^d)$.

When there are finitely many fixed points,
the building blocks of $G^*_{(a,b)}$ are the local cohomology
groups $\lochom^{*-i_0}(z,t)$ for fixed points $z\in\CP^d$ of
$\varphi$ and $t\in (a,b)$.
Giving a couple $(z,t)$ is equivalent to giving a capped orbit
$\bar{z}=(z,u_t)$ where $u_t$ is the capping naturally induced by
the lift $e^{-2i\pi t}\Phi$, as seen in Section~\ref{se:masequiv}.
We can remark that both $\lochom^*(\bar{z}):=\lochom^{*-i_0}(z,t)$
and local Floer cohomology $HF^*(\bar{z})$ have their
support in $[\mmas(\bar{z})-d,\mmas(\bar{z})+d]$
and that they are equal when $\bar{z}$ is non-degenerate.
One can prove that they are isomorphic by using
the isomorphism between cohomology of generating functions
of Hamiltonian diffeomorphisms of $\C^{d+1}$ and their
Floer cohomology in \cite{Vit96}.

Proposition~\ref{prop:globallochom} essentially asserts that
for all capped orbit $\bar{z}$ of $\varphi$,
\begin{equation*}
    \lochom^*(A_0\# \bar{z}) = 
    u^{d+1} \lochom^*(\bar{z}),
\end{equation*}
where $A_0$ is the generator of
$\pi_2(\CP^d)\simeq\Z$ satisfying $\la[\omega],A_0\ra = -\pi$.
The reader familiar to quantum homology can interpret this relation
as the fact that 
the class $u^{d+1}$ of $H^*(\CP^\infty)$ acts on the group $G^*_{(a,b)}$
the same way as
the operator $\textup{q}$ of the quantum homology of $\CP^d$ acts on
the group $HF^{(a,b)}_*$
(see \cite[Sect.~2]{GG18} but beware signs of $\omega$ and action
are opposite to our convention).
Moreover, the relation
\begin{equation*}
    [\textup{pt}] * [\CP^{d-1}] = \textup{q}[\CP^d]
\end{equation*}
in the quantum homology of $\CP^d$ can be interpreted as
\begin{equation*}
    u^d\smile u = u^{d+1}\smile 1,
\end{equation*}
seeing $u^d$, $u$ and $1$ respectively as the Poincaré duals of the homology classes
$[\textup{pt}]$, $[\CP^{d-1}]$ and $[\CP^d]$ in $H_*(\CP^d)$.
This relation is fundamental in the original proof of Ginzburg-Gürel
and thus explains the fundamental role of the subordinated classes $1$, $u$
and $u^{d+1}$ which will be played in our proof.

\section{Proof of Theorem~\ref{thm:main} and its corollaries}\label{se:proof}

In this section, we prove Theorem~\ref{thm:main},
postponing the proof of the Crossing energy theorem
to Section~\ref{se:crossing}.
We then provide the proofs of Corollaries~\ref{cor:hyperbolic}
and \ref{cor:pseudorotation} sketched in the introduction.

\subsection{Preliminaries}

Let $\varphi\in\ham(\CP^d)$ be the time-one map
of a Hamiltonian flow $(\varphi_s)$
with a fixed point $x\in\CP^d$ which is isolated as
an invariant set.
Moreover, let $(\Phi_s)$ be a $\C$-equivariant Hamiltonian
flow lifting $(\varphi_s)$ and let us suppose that
$t(x)=0$ and that
the local cohomology groups
of $x$ associated to the iterations of $\varphi$ are all non-zero.
We will prove Theorem~\ref{thm:main} by contradiction:
let us assume that $\varphi$ has only finitely many
periodic points
so that $\action^m$ has only isolated critical points
in a finite number for all $m\in\N^*$.
In our construction of $\action$, we take
$\varepsilon< 1/2$ so that any fixed point of $\varphi$
have at most $2$ associated critical points.
Taking an iteration, we might suppose that any periodic
point is a fixed point of $\varphi$.
For all $m\in\N^*$, let $(j,\zeta^m_j)\in M_m$,
$j\in\{ 0,1\}$, be the critical points of $\action$
associated to $x$.
In Section~\ref{se:crossing}, we prove the Crossing energy theorem
which applies to our point $x$, isolated as an invariant set, 
in the following way:

\begin{thm}\label{thm:aux}
    There exist $c_\infty >0$,
    families of open neighborhoods $V^m_j,W^m_j\subset M_m$
    of $(j,\zeta^m_j)$ with $W^m_j\subset V^m_j$ which do not intersect 
    $\textup{Crit}(\action^m)\setminus (j,\zeta^m_j)$
    and an adapted pseudo-gradient $X_m$ of $\action^m$,
    such that any gradient or reversed-gradient flow line $\mathbf{u}:\R\to M_m$,
    $\dot{\mathbf{u}} =\pm X_m(\mathbf{u})$, with $\mathbf{u}(s)\not\in V^m_j$
    and $\mathbf{u}(t)\in W^m_j$ for some $m\in\N^*$
    and $j\in\{0,1\}$ satisfies 
    \begin{equation*}
        |\action^m(\mathbf{u}(s))-\action^m(\mathbf{u}(t))|>c_\infty.
    \end{equation*}
\end{thm}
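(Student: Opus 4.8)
The plan is to reduce the statement to an elementary quantitative estimate on the generating function $F^m_{1,t}$ near the lift of the orbit of $x$, exploiting that $x$ is isolated as an invariant set. First I would fix, for each $m$, the critical point $(j,\zeta^m_j)$ and pass to the auxiliary picture of Section~\ref{se:action}: a gradient flow line of $\action^m$ corresponds, after the standard identifications (Lemma~\ref{lem:isocohom}), to a gradient-type trajectory of $\widehat{F}^m_{t}$ on $\CP^{N(m)}$, which in turn lifts to a $\C$-equivariant trajectory on the sphere $\sphere{2N(m)+1}$. The key point is that a broken trajectory of the discrete variational principle that stays near the orbit of $\tilde{x}$ but does not converge to it produces, via Proposition~\ref{prop:gf}, a pseudo-orbit $(z_1,\dots,z_n)$ of the dynamics $\sigma_n\circ\cdots\circ\sigma_1$ which is close to — but not equal to — a genuine orbit segment of $\varphi$ (or $\varphi^m$) contained in a fixed neighborhood of $x$. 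Since $x$ is isolated as an invariant set, no such genuine orbit segment can stay in a small punctured neighborhood of $x$ for all iterates; the escape is what forces a definite drop in action.

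The main steps, in order, would be: (1) choose a neighborhood $U_0$ of $x$ in $\CP^d$ witnessing isolation — there is $N_0$ so that any orbit segment $(\varphi^j(p))_{|j|\le N_0}$ staying in $U_0$ forces $p$ on the invariant set, i.e.\ $p=x$ after our reductions; translate this into neighborhoods $V^m_j\supset W^m_j$ of $(j,\zeta^m_j)$ in $M_m$ using the correspondence between critical points and fixed points together with the compactness of $\CP^d$. (2) Work with an \emph{adapted} pseudo-gradient $X_m$ of $\action^m$, i.e.\ one equal to the actual gradient near each critical point and transverse to the level sets elsewhere, built uniformly in $m$ by patching; along $\dot{\mathbf{u}}=\pm X_m(\mathbf{u})$ the function $\action^m$ is strictly monotone with derivative controlled from below away from $\mathrm{Crit}(\action^m)$. (3) Suppose a flow line enters $W^m_j$ at time $t$ but was outside $V^m_j$ at time $s$. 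Using Proposition~\ref{prop:gf} with its quantitative content, convert ``staying in $M_m$'' near the orbit into the statement that the associated discrete pseudo-orbit is $\epsilon$-close to a true orbit segment of $\varphi$; the isolation hypothesis then says this true segment must leave $U_0$, so the pseudo-orbit itself visits a region at definite distance $\rho>0$ from the orbit of $x$. (4) Finally, bound below the action variation: on the portion of the trajectory where $\mathbf{u}$ is at distance $\ge\rho$ from $(j,\zeta^m_j)$ but inside $V^m_j$, the pseudo-gradient norm $|X_m(\mathbf{u})|$ and the size of $|\ud\action^m(\mathbf{u})|$ are both bounded below by a constant $c$ independent of $m$ (here one uses that $\action^m$ has no other critical point in $V^m_j\setminus(j,\zeta^m_j)$ and the homogeneity/conical structure to get $m$-independence), and the trajectory must cross a ``collar'' of definite width; integrating $|\tfrac{d}{ds}\action^m(\mathbf{u})| = |\ud\action^m(\mathbf{u})\cdot X_m(\mathbf{u})| \ge c_\infty$ over the crossing time yields $|\action^m(\mathbf{u}(s))-\action^m(\mathbf{u}(t))| > c_\infty$.

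The hard part will be Step~(3)–(4): making the constant $c_\infty$ genuinely \emph{independent of $m$}. The generating functions $F^m_{1,t}$ live on spaces $\C^{N(m)+1}$ of dimension growing with $m$, so one must exploit the block structure of $F_{(\boldsymbol{\sigma}^{(m)}_s,\boldsymbol{\delta}_t)}$ — in particular that each iterate contributes a bounded-geometry ``elementary'' piece $f_k(\tfrac{v_k+v_{k+1}}{2})+\tfrac12\langle v_k,iv_{k+1}\rangle$ — and localize the estimate to the bounded number of blocks where the trajectory sits near the orbit of $x$. Concretely, the lower bound on $|\ud\action^m|$ away from the critical point should follow from a uniform estimate $\|\nabla F^m_{1,t}(\zeta)\|\ge c\,\mathrm{dist}(\mathbf{z},\mathbf{z}^*)$ on the relevant sphere, which in turn reduces by Proposition~\ref{prop:gf} to the invertibility with uniformly bounded inverse of the band matrix $i(-\ud\sigma_k\oplus I)$; the uniform bound here is exactly where the $C^1$-continuity of $(\boldsymbol{\sigma}_s)$ and compactness of $[0,1]$ are used. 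Once this uniformity is secured the rest is routine Morse theory and I would carry it out via a standard broken-trajectory compactness argument, as in Ginzburg–Gürel but with Gromov compactness replaced by the finite-dimensional statement that the pseudo-orbit equations are stable under $C^1$-limits.
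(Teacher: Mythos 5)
Your overall reduction — pass to the sphere quotient, interpret a flow line of $\action^m$ in terms of the discrete variational principle of Proposition~\ref{prop:gf}, and use isolation of $x$ as an invariant set to rule out spurious pseudo-orbits — is exactly in the spirit of the paper's Section~\ref{se:crossing}. Steps (1)--(3) of your plan are sound, and the block structure of $F_{(\boldsymbol{\sigma}^m,\boldsymbol{\delta}_t)}$ is indeed the key to $m$-independence. However, Step~(4) contains a genuine gap.

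You propose to prove a pointwise lower bound of the form $\|\nabla F^m_{1,t}(\zeta)\| \ge c\,\dist(\mathbf{z},\mathbf{z}^*)$ with $c>0$ independent of $m$ in a collar around the critical orbit, derived from a uniform bound on the inverse of the band matrix appearing in the proof of Proposition~\ref{prop:gf}. This cannot work in the stated generality. Near the critical point $\mathbf{z}^*$, the linearization of $\nabla F^m$ is (in the $z$-coordinates) the cyclic bidiagonal operator $(\delta z_k)\mapsto(\delta z_k-\ud\sigma_{k-1}(z_{k-1}^*)\delta z_{k-1})$, whose smallest nonzero singular value is controlled by that of $\ud\Phi^m(x)-\id$. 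The hypothesis that $x$ is isolated as an invariant set gives \emph{no} control on the linearization $\ud\varphi(x)$: if $\ud\varphi(x)$ has an eigenvalue $e^{i\theta}$ with $\theta/2\pi$ irrational, then $\ud\Phi^m(x)-\id$ has an eigenvalue $e^{im\theta}-1$ that is arbitrarily small along a subsequence of $m$, so the proposed constant $c$ degenerates. Said differently, your estimate amounts to a uniform-in-$m$ nondegeneracy of the critical orbit, which is strictly stronger than the hypotheses of Theorem~\ref{thm:aux}. The invertibility of the band matrix in Proposition~\ref{prop:gf} only certifies that $\partial_\xi F$ is submersive; it does not bound the full Hessian of $F^m$ from below, because that bound also depends on $\partial_{\text{main}}F$, i.e.\ on the dynamics at $x$.

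The paper avoids any such quantitative estimate by arguing indirectly. Suppose by contradiction there are flow lines with action drop tending to $0$. From $\ud p_2\cdot X_m=\partial_t f^m\,\nabla_\zeta f^m$ and the Cauchy--Schwarz inequality one gets
\[
\Big(\tfrac{r}{4}\Big)^2\ \le\ C^2\,\tau_j\int_0^{\tau_j}|\nabla_\zeta f^{m_j}(\mathbf{u}^j)|^2\,\ud s\ =\ C^2\,\tau_j\,\bigl|\action^{m_j}(\mathbf{u}^j(0))-\action^{m_j}(\mathbf{u}^j(\tau_j))\bigr|,
\]
so $\tau_j\to\infty$ while the integral of $|\nabla_\zeta f^{m_j}|^2$ tends to $0$; hence there are points $\mathbf{w}^j$ along the trajectory, in the annular region $U_r\setminus U_{r/2}$, with $|\nabla F^{m_j}_{t_j}(\mathbf{w}^j)|\to 0$. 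Lemma~\ref{lem:projballcrossing} (the analogue of Lemma~\ref{lem:ballcrossing}) then extracts, via a Cantor diagonal argument, a genuine bi-infinite discrete trajectory of $\sigma_{n_1}\circ\cdots\circ\sigma_1$ that stays in a fixed neighborhood of $x$ for all $k\in\Z$ and is not the constant trajectory at $x$. This contradicts the isolation of $x$ as an invariant set. The lower bound $c_\infty$ is thus obtained \emph{a posteriori} from the failure of compactness, never as a pointwise bound on $|\nabla F^m|$; this is the feature your plan is missing and which makes the constant genuinely $m$-independent.
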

   
Let $c_\infty > 0$ be given by the above result.
Without loss of generality, we suppose that
\begin{equation}\label{eq:cinf}
    0 < c_\infty < \frac{1}{2(d+1)}.
\end{equation}

\subsection{Augmented action}

By analogy with Ginzburg-Gürel augmented action,
for any fixed point $y\in\CP^d$ we define
\begin{equation*}
    \tilde{a}(y^m) := mt(y) - \frac{1}{2(d+1)}\mmas(\tilde{y},(e^{-2i\pi mst(y)}\Phi_{ms}))
    = m\tilde{a}(y).
\end{equation*}
According to Lemma~\ref{lem:indmas},
\begin{equation}\label{eq:amind}
    \begin{split}
        \mind(y^m)-\mind(x^m) &=
        m\left(\mmas\left(\tilde{y},\left(e^{-2i\pi t(y) s}\Phi_s\right)\right) -
        \mmas\left(\tilde{x},\left(e^{-2i\pi t(x) s}\Phi_s\right)\right) \right)\\
        &- 2(d+1)\lfloor mt(y)\rfloor\\
        &= 2m(d+1)\left(\tilde{a}(x) - \tilde{a}(y)\right) + 2(d+1)t(y^m).
    \end{split}
\end{equation}
By Dirichlet's lemma, one can find $m\in\N^*$ such that,
for all fixed point $y$,
the fraction part of each $mt(y)$, which is $t(y^m)$,
satisfies 
\begin{equation*}\label{eq:typ}
t(y^m)\in[0,c_\infty)\cup (1-c_\infty,1)
\end{equation*}
with $m$ taken sufficiently large so that
\begin{equation*}
    |\tilde{a}(y)-\tilde{a}(x)| = 0 \quad \text{ or }\quad
    m|\tilde{a}(y)-\tilde{a}(x)| > 3.
\end{equation*}
Thus equation (\ref{eq:amind}) together with assumption
(\ref{eq:cinf}) implies the following lemma.

\begin{lem}\label{lem:xclose}
    With this specific choice of $m\in\N^*$,
    given any fixed point $y\in\CP^d$, we have:
    \begin{itemize}
        \item 
        $\left|\mind(y^m)-\mind(x^m)\right| \leq 2d +1$ implies 
        $t(y^m)<c_\infty$,
    \item
        $\left|\mind(y^m)-\left(\mind(x^m)+2(d+1)\right)\right|
        \leq 2d+ 1$ implies 
        $t(y^m)>1 - c_\infty$.
    \end{itemize}
\end{lem}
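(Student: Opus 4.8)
The plan is to read off both implications from the identity (\ref{eq:amind}),
$$\mind(y^m)-\mind(x^m) = 2m(d+1)\bigl(\tilde{a}(x)-\tilde{a}(y)\bigr) + 2(d+1)\,t(y^m),$$
together with the two properties secured for this $m$ by Dirichlet's lemma, namely $t(y^m)\in[0,c_\infty)\cup(1-c_\infty,1)$ for every fixed point $y$ and the dichotomy $\tilde{a}(y)=\tilde{a}(x)$ or $m|\tilde{a}(y)-\tilde{a}(x)|>3$, and with the standing assumption $0<c_\infty<\tfrac{1}{2(d+1)}$ from (\ref{eq:cinf}). The whole proof is a case split on whether or not $\tilde{a}(y)=\tilde{a}(x)$.

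First I would dispose of the case $\tilde{a}(y)\neq\tilde{a}(x)$, where $2m(d+1)|\tilde{a}(x)-\tilde{a}(y)|>6(d+1)$. Since $t(y^m)\in[0,1)$ gives $0\le 2(d+1)\,t(y^m)<2(d+1)$, the identity forces $\mind(y^m)-\mind(x^m)$ to lie outside the interval $[-4(d+1),6(d+1)]$, so that both $|\mind(y^m)-\mind(x^m)|$ and $|\mind(y^m)-\mind(x^m)-2(d+1)|$ exceed $4(d+1)>2d+1$. Hence the hypotheses of both bullets fail and the implications hold vacuously.

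It then remains to treat $\tilde{a}(y)=\tilde{a}(x)$, where (\ref{eq:amind}) reduces to $\mind(y^m)-\mind(x^m)=2(d+1)\,t(y^m)$, and to split according to which of the two intervals contains $t(y^m)$. If $t(y^m)\in[0,c_\infty)$, then $\mind(y^m)-\mind(x^m)\in[0,2(d+1)c_\infty)\subset[0,1)$, so $|\mind(y^m)-\mind(x^m)|<1\le 2d+1$ and the conclusion of the first bullet is exactly the assumption $t(y^m)<c_\infty$; meanwhile $|\mind(y^m)-\mind(x^m)-2(d+1)|=2(d+1)(1-t(y^m))>2(d+1)(1-c_\infty)>2d+1$ — this last inequality being where (\ref{eq:cinf}) enters — so the second bullet is vacuous. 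Symmetrically, if $t(y^m)\in(1-c_\infty,1)$, then $\mind(y^m)-\mind(x^m)=2(d+1)\,t(y^m)\in(2(d+1)(1-c_\infty),2(d+1))\subset(2d+1,2(d+1))$, making the first bullet vacuous, while $|\mind(y^m)-\mind(x^m)-2(d+1)|=2(d+1)(1-t(y^m))<2(d+1)c_\infty<1\le 2d+1$, so the conclusion of the second bullet is the assumption $t(y^m)>1-c_\infty$. This exhausts all cases.

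There is no real obstacle: the only point requiring attention is bookkeeping of strict versus non-strict inequalities at the endpoints of the two intervals, the crucial one being $2(d+1)(1-c_\infty)>2d+1$, which is merely a rearrangement of (\ref{eq:cinf}); everything else follows mechanically from the single identity (\ref{eq:amind}).
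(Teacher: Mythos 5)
Your proof is correct and follows exactly the route the paper intends: the paper simply states that Lemma~\ref{lem:xclose} ``follows from equation~(\ref{eq:amind}) together with assumption~(\ref{eq:cinf})'' without further elaboration, and you have supplied precisely the case analysis ($\tilde{a}(y)\neq\tilde{a}(x)$ vacuous, $\tilde{a}(y)=\tilde{a}(x)$ split by the two intervals for $t(y^m)$) that makes this implication explicit. The key numerical observations ($4(d+1)>2d+1$ to dismiss the first case and $2(d+1)(1-c_\infty)>2d+1$, a rearrangement of~(\ref{eq:cinf}), to separate the two bullets) are all correct.
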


Given two subsets $A,B\subset \R$, we denote the smallest distance among their
points by
\begin{equation*}
    \dist(A,B) := \inf \left\{ |a-b|\ |\ a\in A,b\in B\right\} \in [0,+\infty].
\end{equation*}
According to Proposition~\ref{prop:lochomper} and equation
(\ref{eq:suppmind}), Lemma~\ref{lem:xclose} implies

\begin{cor}\label{cor:supp}
    With this specific choice of $m\in\N^*$,
    given any fixed point $y\in\CP^d$ of $\varphi^m$ of
    action $\bar{t}$ with $t\in (-\varepsilon,1+\varepsilon)$,
    if
    \begin{equation*}
        \dist(\supp\lochom^*(y,t),\supp\lochom^*(x,j)) \leq 1,
    \end{equation*}
    then $|t-j|<c_\infty$, for $j\in\{0,1\}$.
\end{cor}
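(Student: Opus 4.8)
The plan is to convert the hypothesis, which is phrased in terms of supports of local cohomology, into one of the two numerical alternatives of Lemma~\ref{lem:xclose}. Recall that here every periodic point is a fixed point of $\varphi$, so $y^m$ and the group $\lochom^*(y,t)$ refer to the action $\action^m$. Assume first $t=t(y^m)\in[0,1)$. By the support estimate (\ref{eq:suppmind}) applied to $y$, $\supp\lochom^*(y,t)\subset[\mind(y^m)-d,\,\mind(y^m)+d]$; applied to $x$, whose $m$-th iterate has action $t(x^m)=0$ since $t(x)=0$, it gives $\supp\lochom^*(x,0)\subset[\mind(x^m)-d,\,\mind(x^m)+d]$, and Proposition~\ref{prop:lochomper} then yields $\supp\lochom^*(x,1)=\supp\lochom^*(x,0)+2(d+1)\subset[\mind(x^m)+2(d+1)-d,\,\mind(x^m)+2(d+1)+d]$. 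If $\dist(\supp\lochom^*(y,t),\supp\lochom^*(x,j))\le 1$, then picking points realising this distance inside the respective intervals and applying the triangle inequality gives $|\mind(y^m)-\mind(x^m)|\le 2d+1$ when $j=0$ and $|\mind(y^m)-(\mind(x^m)+2(d+1))|\le 2d+1$ when $j=1$. These are precisely the hypotheses of the two bullet points of Lemma~\ref{lem:xclose}, which give $t(y^m)<c_\infty$, respectively $t(y^m)>1-c_\infty$, that is $|t-j|<c_\infty$.

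Two preliminary reductions should be recorded first. If $\supp\lochom^*(y,t)=\emptyset$ then the distance is $+\infty$ and there is nothing to prove; and $\supp\lochom^*(x,j)\ne\emptyset$ for $j\in\{0,1\}$, since by hypothesis all iterations of $x$ are homologically visible, so the triangle-inequality step above is legitimate. For a representative $t$ of the action other than $t(y^m)$ --- necessarily $t(y^m)\pm 1$ if it still lies in $(-\varepsilon,1+\varepsilon)$ --- one applies Proposition~\ref{prop:lochomper} once more, which shifts $\supp\lochom^*(y,t)$ by $\pm 2(d+1)$ relative to $\supp\lochom^*(y,t(y^m))$.

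The step I expect to require the most care is this last bookkeeping, coupled with determining which value of $j\in\{0,1\}$ is relevant: one must rule out the ``cross'' configurations, for instance $\supp\lochom^*(y,t(y^m)+1)$ lying within distance $1$ of $\supp\lochom^*(x,0)$. This is exactly where the specific choice of $m$ is used. By identity (\ref{eq:amind}) --- the computation underlying Lemma~\ref{lem:xclose} --- one has $\mind(y^m)-\mind(x^m)=2m(d+1)(\tilde a(x)-\tilde a(y))+2(d+1)t(y^m)$; since $m$ was chosen so that $m|\tilde a(y)-\tilde a(x)|$ is either $0$ or greater than $3$, a would-be bound such as $|\mind(y^m)-(\mind(x^m)-2(d+1))|\le 2d+1$ forces $\tilde a(y)=\tilde a(x)$ and is then at least $2(d+1)>2d+1$, a contradiction. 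Hence in each configuration exactly one value of $j$ is compatible with the hypothesis, and it delivers the claimed estimate $|t-j|<c_\infty$. Apart from this case-checking, the argument uses only (\ref{eq:suppmind}), Proposition~\ref{prop:lochomper}, Lemma~\ref{lem:xclose} and the triangle inequality.
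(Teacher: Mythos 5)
Your argument is correct and follows the same route the paper intends: equation~(\ref{eq:suppmind}) locates the supports in intervals of radius $d$ around $\mind(\cdot)$, Proposition~\ref{prop:lochomper} translates the support by $\pm 2(d+1)$ when changing the representative of the action, the triangle inequality converts the hypothesis $\dist\le1$ into the numerical bounds $\left|\mind(y^m)-\mind(x^m)\right|\le 2d+1$ or $\left|\mind(y^m)-(\mind(x^m)+2(d+1))\right|\le 2d+1$, and Lemma~\ref{lem:xclose} concludes. The paper states the corollary in a single sentence, so the real content you add is the bookkeeping of the representative $t\in\{t(y^m),\,t(y^m)\pm1\}$ versus $j\in\{0,1\}$ and the elimination of the two ``cross'' configurations via~(\ref{eq:amind}) and the dichotomy built into the choice of $m$; that part is correctly handled (if $m|\tilde a(y)-\tilde a(x)|>3$ then $|\mind(y^m)-\mind(x^m)|>4(d+1)$, which already violates the would-be bound, and if it is zero the sign of $2(d+1)t(y^m)$ gives the contradiction). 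Your preliminary reductions (nonempty supports via homological visibility of $x$, emptiness of $\supp\lochom^*(y,t)$ giving distance $+\infty$) are also the right ones.
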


\subsection{Subordinated min-max}

By taking the $m$-th iteration of $\varphi\in\CP^d$,
we can suppose that $\varphi$ satisfies Corollary~\ref{cor:supp}
for $m=1$.
Let $G_t:M\to M$ be the gradient flow associated to the pseudo-gradient
of Theorem~\ref{thm:aux} at time $t\in\R$.
In order to simplify notation in this section,
given any subset $U\subset M$ and any $b\in I$,
we set $U^{\leq b}:= U\cap\{\action \leq b\}$ and
$U^{<b}:= U\cap\{\action < b\}$, whereas we denote by
$\lochom^*(y)$ the local homology associated to a critical points
$y\in M$.
For all critical points $y\in M$, let us define
a specific flow-out $U(y)$, that is an open neighborhood of
$y$ which is invariant under $G_t$ for all $t\geq 0$.
We take a small neighborhood $B$ of $y$, then we set
\begin{equation*}
    U'(y) := \bigcup_{t\geq 0} G_t(B),
\end{equation*}
let $\{z_j\}$ be the family of critical points in the closure of $U'(y)$,
we then define $U(y):=U'(y)\cup\{ z_j\}$ which is in fact open
if one takes $B$ small enough.
Let $z_0:=(0,\zeta_0)$ and $z_1:=(0,\zeta_1)$ be the two critical points
associated to the fixed point $x\in\CP^d$.
Applying Theorem~\ref{thm:aux} together with Corollary~\ref{cor:supp},
we choose $B$ small enough such that $z_j\in U(y)$ implies that
\begin{equation}\label{eq:supp}
    \dist(\supp\lochom^*(y),\supp\lochom^*(z_j)) > 1,
\end{equation}
and in the case where $y=z_j$, we do the same so that this last
equation holds also for critical points $y\in U(z_j)$.
We also ask that if some critical point $y$ is in $U(y')$,
then $U(y)\subset U(y')$ (this could be achieved by induction on
the critical values, starting by defining the flow-out for the critical
points of largest value).
We first prove that the local cohomology $\lochom^*(z_0)$
``persists in the action window $[0,1+\varepsilon)$''.
Let $v_0\in\lochom^*(z_0)$ be a non-zero class,
which exists by hypothesis.

\begin{lem}\label{lem:persistence}
    For all $b\in [0,1+\varepsilon)$, there exists a class
    $v\in H^*(M^{\leq b},M^{<0})$ such that its image under
    the morphism induced by the inclusion $H^*(M^{\leq b},M^{<0})
    \to H^*(M^{\leq 0},M^{<0})$ is $v_0$.
    Moreover, given one of the above flow-outs $U=U(y)$,
    \begin{equation*}
        v\not\in\ker \left(H^*(M^{\leq b},M^{<0})\to H^*(U^{\leq b},U^{<0})\right)
    \end{equation*}
    if and only if $z_0\in U$, where the morphism is induced by inclusion.
\end{lem}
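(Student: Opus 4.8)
The plan is to prove the existence of $v$ and the characterisation together, building $v$ level by level so that it stays ``carried only by $z_0$''.

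\emph{Existence.} I would run an induction on the finitely many critical values $0=t_0<t_1<\cdots<t_k\le b$ of $\action$ in $[0,b]$. Suppose inductively that $v_0$ has been lifted to a class $v^{(i-1)}\in H^*(M^{\le t_{i-1}+\eta},M^{<0})$ (small $\eta>0$) whose image under $H^*(M^{\le s},M^{<0})\to H^*(M^{\le0},M^{<0})$ is $v_0$ for all $s\le t_{i-1}+\eta$, and which has no component at any critical point of positive action. The obstruction to lifting $v^{(i-1)}$ to level $t_i+\eta$ is the image of $v^{(i-1)}$ under the connecting map $\delta_i$ of the triple $(M^{\le t_i+\eta},M^{\le t_i-\eta},M^{<0})$, a class of degree one higher in $H^{*+1}(M^{\le t_i+\eta},M^{\le t_i-\eta})$, which by the excision arguments of Section~\ref{se:homaction} and (\ref{eq:locexcision}) is $\bigoplus_w\lochom^{*+1}(w)$ over the critical points $w$ of action $t_i$. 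Since $v_0\in\lochom^n(z_0)$ with $n\in\supp\lochom^*(z_0)$, a non-zero component of $\delta_i v^{(i-1)}$ at such a $w$ forces $n+1\in\supp\lochom^*(w)$, hence $\dist\bigl(\supp\lochom^*(w),\supp\lochom^*(z_0)\bigr)\le1$, hence $t_i<c_\infty$ by Corollary~\ref{cor:supp}, and in particular $w\ne z_0$. But, $v^{(i-1)}$ being carried only by $z_0$, this component is computed by pseudo-gradient flow lines $\mathbf{u}\colon\R\to M$ from $w$ to $z_0$; any such line has $\mathbf{u}(s)\to w\notin V^1_0$ as $s\to-\infty$ and $\mathbf{u}(s)\to z_0$, so $\mathbf{u}(s)\in W^1_0$, as $s\to+\infty$, while its action variation tends to $t_i<c_\infty$ — contradicting Theorem~\ref{thm:aux}, since $V^1_0$ meets no critical point of $\action$ except $z_0$. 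Hence $\delta_i v^{(i-1)}=0$, a lift exists, and among the lifts I would keep the one acquiring no component at the critical points of action $t_i$, which preserves the inductive hypothesis. At $i=k$ this yields the desired $v$, carried only by $z_0$.

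\emph{The characterisation.} Let $U=U(y)$ be one of the flow-outs. Restriction to $U$ commutes with restriction to sublevels, so the composite $H^*(M^{\le b},M^{<0})\to H^*(U^{\le b},U^{<0})\to H^*(U^{\le0},U^{<0})$ equals the restriction $r$ to level $0$ followed by the map $H^*(M^{\le0},M^{<0})\to H^*(U^{\le0},U^{<0})$, which by excision and (\ref{eq:locexcision}) is the projection of $\bigoplus\lochom^*(\cdot)$ (sum over the critical points of action $0$, one summand being $\lochom^*(z_0)$) onto the summands of the critical points lying in $U$. If $z_0\in U$, this projection restricts to the identity on $\lochom^*(z_0)$, so the image of $v$ in $H^*(U^{\le0},U^{<0})$ is $v_0\ne0$, whence $v\notin\ker\bigl(H^*(M^{\le b},M^{<0})\to H^*(U^{\le b},U^{<0})\bigr)$. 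If $z_0\notin U$, the image of $v$ in $H^*(U^{\le0},U^{<0})$ is $0$; moreover, by the inductive construction and compatibility of restriction with sublevels, $v|_U$ acquires no component at any critical point of $U$ of action in $(0,b]$ either, so climbing the finitely many critical values of $\action$ in $(0,b]$ shows that the image of $v|_U$ in $H^*(U^{\le s},U^{<0})$ vanishes for every $s\in[0,b]$; taking $s=b$ gives $v|_U=0$.

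\emph{Expected main obstacle.} The delicate part will be to justify that the connecting maps $\delta_i$ — and the ``no new component at level $t_i$'' normalisation used to single out the lifts — are genuinely computed by pseudo-gradient flow lines converging to the critical points, so that Theorem~\ref{thm:aux} applies, and to make the resulting property ``$v$ is carried only by $z_0$'' precise and stable under restriction to the flow-outs. This is precisely the role of the adapted pseudo-gradient $X_1$ of Theorem~\ref{thm:aux} together with a Gromoll--Meyer type analysis of the (possibly degenerate) critical points; the degree and support bookkeeping, the excisions, and the splitting (\ref{eq:locexcision}) should be routine given Sections~\ref{se:mas}--\ref{se:homaction}.
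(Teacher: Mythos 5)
Your overall plan—induction on critical values, argue that the connecting homomorphism at each level vanishes, then derive the characterisation—is close to the paper's strategy, but the key step of your existence argument has a genuine gap, one that you yourself flag in your last paragraph.

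The problem is the sentence asserting that the non-zero component of $\delta_i v^{(i-1)}$ at $w$ is ``computed by pseudo-gradient flow lines from $w$ to $z_0$.'' In the degenerate setting, neither the connecting homomorphism of a triple nor a phrase like ``$v^{(i-1)}$ is carried only by $z_0$'' can be reduced to a flow-line count; the relevant groups $H^*(M^{\le s},M^{<0})$ are filtered, not split, and there is no direct-sum decomposition over critical points that would give meaning to ``having no component at $w$.'' The paper sidesteps precisely this issue by encoding the dynamics once and for all in the flow-outs $U(y)$: the carefully arranged support gap (\ref{eq:supp}) is established \emph{before} the induction, using Theorem~\ref{thm:aux} and Corollary~\ref{cor:supp}, and the induction never mentions flow lines again. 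The paper's contradiction at the inductive step is not ``the flow line is too short,'' but rather a purely homological degree mismatch: by the inductive characterisation, $\partial^*_k v'\neq 0$ forces $z_0\in U_k$, and then (\ref{eq:supp}) says $\lochom^{\ell+1}(y_k)=0$ for the relevant degree $\ell$. You would need to prove the support/flow-out compatibility (\ref{eq:supp}) first, and then replace your flow-line step by the paper's argument, to make this watertight.

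A related structural point: the paper makes the ``carried only by $z_0$'' intuition precise by baking the \emph{characterisation clause} (the ``if and only if $z_0\in U$'' part) into the inductive hypothesis, and then using it at two places: (i) to prove $\partial^*v'=0$, via the induction applied to $U_k$, and (ii) at the end to propagate the characterisation to every flow-out $U$, using that $y_{k_q}\in U$ implies $U_{k_q}\subset U$ by construction. Your ``normalisation'' step (subtracting off components at $y_k$) corresponds to the paper's construction of $v=v''-\sum_k j^*(w_k)$, but this correction must be done relative to the flow-outs, not to an imagined direct-sum decomposition of $H^*(M^{\le t_i+\eta},M^{<0})$; and your closing argument for the $z_0\notin U$ direction (``climbing the critical values in $(0,b]$'') has the same imprecision. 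In short, the right fix is the one the paper uses: a stronger inductive hypothesis phrased in terms of restrictions to flow-outs, together with (\ref{eq:supp}), rather than a Morse-flow-line description of the coboundary.

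One minor point: when you invoke Corollary~\ref{cor:supp}, it gives $|t_i-j|<c_\infty$ for some $j\in\{0,1\}$, so $t_i$ could also be close to $1$. This does not matter for your intended contradiction, but worth noting in a careful write-up.
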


\begin{proof}
    According to Morse deformation lemma, if the lemma
    is true for $b$ and $(b,c]\subset I$ does not contain
    any critical value, then the lemma is also true for $c$.
    Since there is a finite number of critical values, we can
    thus prove this lemma inductively on the critical value $b\geq 0$.
    We start with the case $b=0$. As we have seen, by excision
    $\lochom^*(z_0)\subset H^*(M^{\leq 0},M^{<0})$ and taking
    $v=v_0$ under this injection is enough.

    Let us assume that $b>0$ is a critical value and that
    the lemma is true on $[0,b)$.
    Let $(y_k)$ be the family of critical points of value $b$
    and $U_k := U(y_k)$ be their associated flows-out.
    We will work with the following commutative diagram:
    \begin{equation}\label{cd:persistence}
        \begin{gathered}
        \xymatrix{
            H^*(M^{\leq b},M^{<b}) \ar[r]^-{j^*} \ar[d] &
            H^*(M^{\leq b},M^{<0}) \ar[r]^-{i^*} \ar[d] &
            H^*(M^{< b},M^{<0}) \ar[d] \\
            H^*(U_k^{\leq b},U_k^{<b}) \ar[r]^-{j_k^*} &
            H^*(U_k^{\leq b},U_k^{<0}) \ar[r]^-{i_k^*} &
            H^*(U_k^{< b},U_k^{<0})
        }
    \end{gathered}
    \end{equation}
    where every arrow is induced by inclusion.
    By Morse deformation lemma the $M^{<b}$ and $U^{<b}$ in
    the right hand side of the diagram can be replaced by
    $M^{\leq c}$ and $U^{\leq c}$ for some $c<b$ close enough.
    By induction, there thus exists $v'\in H^*(M^{<b},M^{<0})$
    satisfying the lemma (with symbol $\leq b$ replaced by $<b$).
    Let us first show that $v'$ is in the image of $i^*$.
    According to the long exact sequence of the triple $(M^{\leq b},M^{<b},M^{<0})$,
    it boils down to showing that $\partial^*v'=0$ where
    $\partial^*$ is the coboundary map.
    By contradiction let us assume that $\partial^*v'\neq 0$.
    By excision, we recall that
    \begin{equation*}
        H^*(M^{\leq b},M^{<b}) \simeq \bigoplus_k \lochom^*(y_k),
    \end{equation*}
    thus if $\partial^*v' \neq 0$ then $\partial_k^* v'\neq 0$ for some $k$,
    where $\partial^*_k$ is the composition of the coboundary with the projection
    on $\lochom^*(y_k)$ with respect to the above direct sum.
    Identifying $H^*(U_k^{\leq b},U_k^{<b})$ with
    $\lochom^*(y_k)$ by excision, one has the following commutative diagram:
    \begin{equation*}
        \xymatrix{
            H^*(M^{< b},M^{<0}) \ar[r]^-{\partial^*} \ar[d] \ar[rd]^-{\partial^*_k} &
            H^{*+1}(M^{\leq b},M^{<b}) \ar[d] \\
            H^*(U_k^{< b},U_k^{<0}) \ar[r] &
            \lochom^{*+1}(y_k)
        }
    \end{equation*}
    where the vertical arrows are induced by inclusions and the horizontal
    are coboundary maps. Thus we see that $v'$ is not in the kernel of
    the left hand side arrow, so that by induction hypothesis $z_0\in U_k$.
    But according to equation (\ref{eq:supp}), if $\ell$ is the degree of $v'$
    (which maps to $v_0\in \lochom^\ell(z_0)$),
    then $\lochom^{\ell+1}(y_k) = 0$, a contradiction.

    Hence $\partial^* v' = 0$ and there exists $v''\in H^*(M^{\leq b},M^{<0})$
    such that $i^*v'' = v'$. This $v''$ maps to $v_0$ as required but does not
    satisfy the second conclusion of the lemma \emph{a priori}.
    We now explain how to build $v$ in the inverse image of $v'$.
    For a fixed $k$, let $v''_k\in H^*(U_k^{\leq b},U_k^{<0})$ be the image of
    $v''$ under the vertical arrow of (\ref{cd:persistence}).
    For $v''$ to satisfy the conclusions of the lemma, we need $v''_k$
    to be zero if and only if the image of $v'=i^* v''$ under its vertical
    arrow is zero. If $i^*_k v''_k = 0$, then there exists $w'_k\in H^*(U_k^{\leq b},U_k^{<b})$
    such that $j^*_k w'_k = v''_k$.
    We recall that the left hand side arrow is equivalent to the projection
    \begin{equation*}
        \bigoplus_{\ell} \lochom^*(y_\ell) \to \lochom^*(y_k),
    \end{equation*}
    let $w_k\in H^*(M^{\leq b},M^{<b})$ be then the image of $w'_k$
    under the inclusion $\lochom^*(y_k)\subset H^*(M^{\leq b},M^{<b})$.
    We finally set
    \begin{equation*}
        v := v'' - \sum_k j^*(w_k) \in H^*(M^{\leq b},M^{<0})
    \end{equation*}
    to be the wanted solution.

    The conclusion is true for the $U=U_k$
    with this choice of $v$, by construction.
    Let $U$ be the flow-out of some critical point.
    If $U$ does not contain any of the $y_k$, by the Morse deformation
    lemma, $U^{\leq b}$ retracts on $U^{<b}$ so that the conclusion follows
    by induction.
    Otherwise, let $(y_{k_q})$ be the sub-family of $(y_k)$ included in
    $U$, so that $U_{k_q}\subset U$ by construction of our flows-out.
    If $z_0\in U$, then by hypothesis, $v'$ is not in the kernel of
    $H^*(M^{<b},M^{<0})\to H^*(U^{<b},U^{<0})$ and, as $i^* v=v'$,
    $v$ is neither in the kernel of
    $H^*(M^{\leq b},M^{<0})\to H^*(U^{\leq b},U^{<0})$.
    Conversely, if $v$ is not in the above kernel, either
    $v'$ is not in the kernel of its restriction to $U$, in
    which case $z_0\in U$ by induction, or its image
    $v_U \in H^*(U^{\leq b},U^{<0})$ under
    the above map has the form $j^*_U w_U$ where
    $j^*_U : H^*(U^{\leq b},U^{<b}) \to H^*(U^{\leq b},U^{<0})$.
    Now by excision $H^*(U^{\leq b},U^{<b})$ is isomorphic
    to the direct sum of the $H^*(U_{k_q}^{\leq b},U_{k_q}^{<b})$'s.
    Thus, there is a $k_q$ such that $w_U$ projects on $w_{k_q}\neq 0$.
    The commutativity of the left hand square of (\ref{cd:persistence}) for $k=k_q$
    together with the construction of $v$ bring a contradiction.
\end{proof}

\begin{cor}\label{cor:persistence}
    There exists a subgroup $G\subset H^*(M^{\leq 1},M^{<0})$
    which image under the map $H^*(M^{\leq 1},M^{<0}) \to
    H^*(M^{\leq 0},M^{<0})$ is $\lochom^*(z_0)$ and such that
    its image under $H^*(M^{\leq 1},M^{<0})\to H^*(U^{\leq 1},U^{<0})$
    is non-zero if and only if $z_0\in U$,
    where $U:=U(y)$ is a flow-out.
\end{cor}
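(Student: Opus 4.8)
The plan is to obtain Corollary~\ref{cor:persistence} from Lemma~\ref{lem:persistence} by a packaging argument, the actual content being entirely contained in that lemma. First I would record that $\lochom^*(z_0)$ is non-zero, by the standing assumption that all iterations of $x$ are homologically visible, and that by the excision isomorphism (\ref{eq:locexcision}) it sits inside $H^*(M^{\leq 0},M^{<0})$ as a direct summand. For each non-zero class $v_0\in\lochom^*(z_0)$, Lemma~\ref{lem:persistence} applied with the admissible value $b=1$ (note $1\in[0,1+\varepsilon)$; critical values of $\action$ are allowed, being handled in the inductive proof of the lemma) produces a class $v_{v_0}\in H^*(M^{\leq 1},M^{<0})$ whose image under the inclusion-induced map $\rho\colon H^*(M^{\leq 1},M^{<0})\to H^*(M^{\leq 0},M^{<0})$ is $v_0$ (via (\ref{eq:locexcision})), and which, for every flow-out $U=U(y)$, lies outside $\ker\bigl(H^*(M^{\leq 1},M^{<0})\to H^*(U^{\leq 1},U^{<0})\bigr)$ if and only if $z_0\in U$. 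Choosing one such $v_{v_0}$ for every $v_0$ (it suffices to do this for the finitely many elements of a generating set of the finitely generated abelian group $\lochom^*(z_0)$), I would let $G$ be the subgroup of $H^*(M^{\leq 1},M^{<0})$ generated by these classes.

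The two asserted properties then follow from the additivity of inclusion-induced maps. Since $\rho$ is a homomorphism with $\rho(v_{v_0})=v_0$ for each generator, $\rho(G)$ is the subgroup of $\lochom^*(z_0)$ generated by a generating set of $\lochom^*(z_0)$, so $\rho(G)=\lochom^*(z_0)$ exactly; it cannot be larger precisely because each generator $v_{v_0}$ is a \emph{lift} of an element already lying in the summand $\lochom^*(z_0)$. Fixing a flow-out $U=U(y)$ and writing $r_U$ for the restriction $H^*(M^{\leq 1},M^{<0})\to H^*(U^{\leq 1},U^{<0})$: if $z_0\notin U$ then every generator of $G$ lies in $\ker r_U$, hence $r_U(G)=0$; if $z_0\in U$ then already $r_U(v_{v_0})\neq 0$ for a single non-zero $v_0$, hence $r_U(G)\neq 0$. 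This is exactly the ``if and only if'' in the statement.

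There is no genuine obstacle beyond this bookkeeping. The one point worth keeping in mind is that $G$ must map simultaneously \emph{onto} $\lochom^*(z_0)$ and \emph{into} $\lochom^*(z_0)$ under $\rho$, which is why one takes the subgroup generated by honest lifts of (generators of) $\lochom^*(z_0)$ — a single lift of a single non-zero class would not surject — and why the flow-out dichotomy of Lemma~\ref{lem:persistence} must be available for that same choice of lifts, as it indeed is.
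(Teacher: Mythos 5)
Your proposal is correct and follows essentially the same route as the paper: you take $G$ to be the subgroup generated by the lifts $v$ produced by Lemma~\ref{lem:persistence} for $b=1$, one for each (generator of) $\lochom^*(z_0)$, and the two asserted properties are then immediate from the homomorphism property of the restriction maps and from the flow-out dichotomy built into that lemma. The paper states this as a one-line proof; your extra bookkeeping (surjectivity onto $\lochom^*(z_0)$, both directions of the ``if and only if'') is accurate and just spells out what the paper leaves implicit.
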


\begin{proof}
    Take $G$ to be the subgroup of $H^*(M^{\leq 1},M^{<0})$ generated
    by every $v$ given by Lemma~\ref{lem:persistence} for $b=1$
    and each $v_0\in\lochom^*(z_0)\setminus 0$.
\end{proof}

By using (\ref{eq:supp}) for $z_1$, one can prove dually the following
lemma.

\begin{lem}\label{lem:persistence2}
    The subgroup $\lochom^*(z_1)\subset H^*(M^{\leq 1},M^{<1})$
    trivially intersects the kernel of the map
    \begin{equation*}
        H^*(M^{\leq 1},M^{<1}) \to H^*(U^{\leq 1},U^{<0})
    \end{equation*}
    induced by inclusion, where $U:=U(z_1)$ is the flow-out of $z_1$.
\end{lem}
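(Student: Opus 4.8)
The plan is to reduce the assertion to the injectivity of a single restriction map inside the flow-out $U:=U(z_1)$, and then to prove that injectivity by an induction on critical values that uses $(\ref{eq:supp})$ in the same spirit as Lemma~\ref{lem:persistence}, but now applied to \emph{every} critical point of $\action$ lying below $z_1$ in $U$.

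First I would observe that $z_1$ is the only critical point of value $1$ in its own flow-out $U=U(z_1)$ (a trajectory leaving a neighbourhood of $z_1$ changes the value of $\action$ strictly), so that by the Morse deformation lemma and excision $H^*(U^{\leq1},U^{<1})\simeq\lochom^*(z_1)$, and the restriction map $r\colon H^*(M^{\leq1},M^{<1})\to H^*(U^{\leq1},U^{<1})$ is, under the excision isomorphism $H^*(M^{\leq1},M^{<1})\simeq\bigoplus_i\lochom^*(z'_i)$ over the critical points $z'_i$ of value $1$, the projection onto the summand $\lochom^*(z_1)$; in particular $r$ is injective on that summand. Since the inclusion-induced map of the lemma factors as $H^*(M^{\leq1},M^{<1})\xrightarrow{r}H^*(U^{\leq1},U^{<1})\xrightarrow{\rho}H^*(U^{\leq1},U^{<0})$, it suffices to show that $\rho$ is injective.

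Next, by the long exact sequence of the triple $(U^{\leq1},U^{<1},U^{<0})$ one has $\ker\rho=\im\bigl(H^{*-1}(U^{<1},U^{<0})\to H^*(U^{\leq1},U^{<1})\simeq\lochom^*(z_1)\bigr)$, so in degrees outside $\supp\lochom^*(z_1)$ there is nothing to prove, and it is enough to check $H^{\ell-1}(U^{<1},U^{<0})=0$ for each $\ell\in\supp\lochom^*(z_1)$. This is where $(\ref{eq:supp})$ enters: by the choice of the flow-outs, every critical point $y$ of $\action$ lying in $U=U(z_1)$ with $\action(y)<1$ (in particular $y\neq z_1$) satisfies $\dist(\supp\lochom^*(y),\supp\lochom^*(z_1))>1$, hence $\lochom^{\ell-1}(y)=0$ for every such $\ell$. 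Then I would induct on the finitely many critical values of $\action$ in $[0,1)$ attained in $U$: the Morse deformation lemma and excision identify $H^{\ell-1}(U^{<c''},U^{<c'})$ with $\bigoplus\lochom^{\ell-1}(y)$ over critical points $y\in U$ of value in $[c',c'')$, which vanishes; the long exact sequence of the triple $(U^{<c''},U^{<c'},U^{<0})$ then yields an injection $H^{\ell-1}(U^{<c''},U^{<0})\hookrightarrow H^{\ell-1}(U^{<c'},U^{<0})$. Starting from $H^{\ell-1}(U^{\leq0},U^{<0})\simeq\bigoplus_{y\in U,\ \action(y)=0}\lochom^{\ell-1}(y)=0$ and climbing up to level $1$ gives $H^{\ell-1}(U^{<1},U^{<0})=0$, hence the injectivity of $\rho$ and the lemma.

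The only genuinely substantive ingredient — the support gap $(\ref{eq:supp})$ for all critical points of $U(z_1)$ lying below $z_1$ — is already in hand from Theorem~\ref{thm:aux} and Corollary~\ref{cor:supp}, so everything above is bookkeeping; the sole point requiring a little care is carrying out the Morse deformations with the chosen pseudo-gradient inside the forward-invariant set $U(z_1)$, which is routine. In this sense the argument is the exact mirror of Lemma~\ref{lem:persistence}, and in fact easier, since we never leave the flow-out and therefore never have to construct a global cohomology class.
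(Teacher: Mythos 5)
Your proof is correct and is a natural way to make precise the ``dual'' argument that the paper only alludes to (it gives no proof of this lemma, referring to the proof of Lemma~\ref{lem:persistence} as a template). The key moves — factoring the restriction as $H^*(M^{\leq1},M^{<1})\xrightarrow{r}H^*(U^{\leq1},U^{<1})\xrightarrow{\rho}H^*(U^{\leq1},U^{<0})$, observing that under excision $r$ is a projection onto the summands indexed by critical points of value $1$ lying in $U$ (hence injective on $\lochom^*(z_1)$), identifying $\ker\rho$ via the long exact sequence of the triple $(U^{\leq1},U^{<1},U^{<0})$, and then killing $H^{\ell-1}(U^{<1},U^{<0})$ for $\ell\in\supp\lochom^*(z_1)$ by an induction on the critical values of $\action|_U$ using the support-gap condition (\ref{eq:supp}) — all match the intended strategy and are carried out soundly.

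Two small points of rigor are worth flagging, neither of which affects the validity of the argument. First, the opening assertion that $z_1$ is the \emph{only} critical point of value $1$ in $U(z_1)$ is not obviously justified: $U(z_1)$ is obtained by flowing a small ball $B$ forward, and $B$ contains points with $\action$ slightly above $1$; a trajectory from such a point could in principle converge to another critical point of value $1$. Fortunately your argument never uses this uniqueness — if $U(z_1)$ contained further critical points $y$ of value $1$, then $r$ is still a projection onto $\lochom^*(z_1)\oplus\bigoplus\lochom^*(y)$, still injective on $\lochom^*(z_1)$, and the inductive vanishing of $H^{\ell-1}(U^{<1},U^{<0})$ kills $\ker\rho$ outright in those degrees, so the conclusion survives. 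Second, the phrase ``it suffices to show that $\rho$ is injective'' is slightly stronger than what the argument actually delivers and needs: what you in fact prove is that $\ker\rho$ vanishes in the degrees $\ell\in\supp\lochom^*(z_1)$, which is exactly enough. Both are matters of phrasing rather than substance.
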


\begin{proof}[Proof of Theorem~\ref{thm:main}]
    Let $v\in H^*(M^{\leq 1},M^{<0})$ be the class given by
    Lemma~\ref{lem:persistence}
    for $b=1$.
    Thus, applying Proposition~\ref{prop:globallochom} to Corollary~\ref{cor:persistence},
    there exists a class $w\in\lochom^*(z_1)$ that maps to
    $u^{d+1}v\in H^*(M^{\leq 1},M^{<0})$.
    Considering the restriction map $H^*(M^{\leq 1},M^{<0})\to
    H^*(U^{\leq 1},U^{<0})$, where $U:= U(z_1)$ is the flow-out of $z_1$,
    the image of $u^{d+1}v$ is non-zero by Lemma~\ref{lem:persistence2},
    thus the image $v'$ of $v$ is non-zero.
    Therefore, Lemma~\ref{lem:persistence} implies that $z_0\in U(z_1)$.
    For $t\in [0,1]$, let $i^*_t$ be the map induced by inclusion
    \begin{equation*}
        i^*_t : H^*(U^{\leq 1},U^{<0}) \to H^*(U^{\leq t},U^{<0}),
    \end{equation*}
    and, for $0\leq k\leq d+1$, let $\tau_k\in [0,1]$ be the family of
    min-max values
    \begin{equation}\label{eq:minmax}
        \tau_k := \inf\left\{ t \geq 0 \ | \ 
        u^k v' \not\in \ker i^*_t \right\}.
    \end{equation}
    As we have just seen, $u^{d+1}v'\neq 0$ so that $u^k v'\neq 0$
    for $k\leq d+1$.
    Lemma~\ref{lem:persistence} implies that $\tau_0 = 0$.
    By the long exact sequence of the triple $(U^{\leq 1},U^{<1},U^{<0})$,
    the image of $u^{d+1}v'$ under $H^*(U^{\leq 1},U^{<0}) \to
    H^*(U^{<1},U^{<0})$ is zero, thus $\tau_{d+1} = 1$.
    By Lyusternik-Schnirelmann theory, $\tau_1$ is a critical value
    of $\action|_U$ and $\tau_0<\tau_1<\tau_{d+1}$ since $\action$ has a finite
    number of critical values (we recall that $d\geq 1$).
    Let $(y_j)$ be the family of critical points of value $\tau_1$ in $U$.
    According to Theorem~\ref{thm:aux}, if the flow-out $U$ has been taken small
    enough, $\tau_1 \leq 1-c_\infty $, thus $\tau_1 < c_\infty$ since there are no
    critical points with value in $[c_\infty,1-c_\infty]$.
    Since $H^*(U^{\leq \tau_1},U^{<\tau_1})$ decomposes in the direct sum of
    the local cohomologies of the $y_j$'s, we find by similar arguments as before
    that there exists a $j$ such that the image of $uv'$ on
    $H^*(U_j^{\leq \tau_1},U_j^{<0})$ is non-zero.
    For this $j$, the image of $v'$ under the same map is thus also non-zero,
    hence $z_0\in U_j$ by Lemma~\ref{lem:persistence}.
    But according to Theorem~\ref{thm:aux}, for $U_j$ taken small
    enough in our proofs, one must have $\tau_1 \geq c_\infty$,
    a contradiction.
\end{proof}

\subsection{Corollaries}
\begin{proof}[Proof of Corollary~\ref{cor:hyperbolic}]
    Let $x\in\CP^d$ be a hyperbolic fixed point of $\varphi\in\ham(\CP^d)$
    and $\Phi\in\ham_\C(\C^{d+1})$ be a lift of $\varphi$.
    According to Theorem~\ref{thm:main}, it is enough to prove that
    the local cohomology group $\lochom^*(x,t(x^k))$
    is non zero for all iteration $k\in\N^*$.
    In Section~\ref{se:lochom}, we have seen that
    $\lochom^*(x,t(x^k))\simeq \lochom^*(\widehat{F}_{t(x^k)};\zeta_k)$
    where $\zeta_k\in\CP^{N(k)}$ is the critical point of
    the map $\widehat{F}_{t(x^k)}:\CP^{N(k)} \to\R$ induced
    by the generating function $F_{t(x^k)}$ of
    $e^{-2i\pi t(x^k)}\Phi\in\ham_\C(\C^{d+1})$.
    Since $x$ is hyperbolic, $\dim\ker(\ud\varphi(x)^k-\id)=0$
    for all $k\in\N^*$, thus
    $\ud^2\widehat{F}_{t(x^k)}$ is non-degenerated
    according to (\ref{eq:kerhess})
    and $\lochom^*(x,t(x^k))$ has rank $1$.
\end{proof}

\begin{proof}[Proof of Corollary~\ref{cor:pseudorotation}]
    Let $\varphi\in\ham(\CP^d)$ be a pseudo-rotation
    of $\CP^d$ with fixed points $x_1,\dotsc,x_{d+1}\in\CP^d$
    and a lift $\Phi\in\ham_\C(\C^{d+1})$.
    According to Theorem~\ref{thm:main},
    it is enough to prove that the local cohomology groups
    $\lochom^*(x_j,t(x^k_j))$ are non-zero for all
    $j$ and all $k\in\N^*$.
    This is a consequence of the following fact
    due to Théret \cite{The98}:
    given any $\varphi\in\ham(\CP^d)$,
    there always exists some integer $k\in\N$
    such that the classes 
    $u^k,u^{k+1},\dotsc,u^{k+d}\in H^*(\CP^N)$ are not
    in the kernel of the map induced by the projection
    $I\times\CP^N\to\CP^N$
    \begin{equation*}H^*(\CP^N) \to \HH\end{equation*}
    with associated min-max values in $[0,1)$
    (that is the values $\tau_k$ of (\ref{eq:minmax})
    where the $u^k v'$'s are replaced by the $u^{k+j}$'s).
    We briefly recall the proof in our setting.

    Given $t\in [0,1]$, let $\ell(t)\in\N$
    be the integer
    \begin{equation*}
        \ell(t) := \max\left\{ k\in\N\ |\
            u^k\not\in\ker\left(
                H^*(\CP^N) \to 
                H^*(\{\action\leq t\})
        \right)\right\}.
    \end{equation*}
    We can show that $\ell(t+1)=\ell(t)+d+1$ by combining
    Corollary~\ref{cor:homlength} and Lemma~\ref{lem:cdhomstab},
    hence the statement for $k:=\ell(0)+1$.

    In the case where $\varphi\in\ham(\CP^d)$
    has isolated fixed points, the min-max values
    of the $u^{k+j}$'s must be different by Lyusternik-Schnirelmann
    theory.
    Thus any $\varphi\in\ham(\CP^d)$ with isolated fixed points
    has at least $d+1$ fixed points with non-zero 
    associated local cohomology.
    Hence the conclusion holds, since every iteration of a pseudo-rotation
    have only $d+1$ fixed points.
\end{proof}

\section{Ginzburg-Gürel Crossing Theorem for generating functions}
\label{se:crossing}

In this section, we prove the analogue of Ginzburg-Gürel
Crossing theorem for generating functions.
Since the proof in $\CP^d$ is essentially the
same as the one in $\C^d$ with some technical changes
which could make it less transparent to the reader,
we first provide the argument for $\C^d$, even though
the $\C^d$ setting will not be employed in this paper.

\subsection{Crossing energy theorem in $\C^d$}

If $\boldsymbol{\sigma}:=(\sigma,\dotsc,\sigma)$ is an tuple of even size associated to
$\Phi$, then $\boldsymbol{\sigma}^m$ is
a tuple of even size of the iterated diffeomorphism $\Phi^m$.
Given any $x\in\C^d$, let $B^{2d}_r(x) := \{ z\in\C^d\ |\ |z-x|< r\}$
or simply $B_r(x)$.
We will denote by $A_m$ the linear isomorphism
of $(\C^d)^{mn+1}$ defined by 
$A_m(\mathbf{v}) := \mathbf{w}$ where $w_k =\frac{v_k + v_{k+1}}{2}$.
Throughout this section, we will study the generating functions
$F_{(\boldsymbol{\sigma},\id)}$ of $\Phi^m$
with a linear change of coordinates:
let $F^m(w) := F_{(\boldsymbol{\sigma},\id)}\circ A_m^{-1}(w)$.
Given a tuple $\boldsymbol{\delta}:=(\delta_1,\dotsc,\delta_m)$,
$x\in\C^d$ and a radius $r>0$, we denote by
$B_r(x,\boldsymbol{\delta})\subset (\C^d)^m$ the open set
\begin{multline*}
    B_r\left(\frac{x +\delta_1(x)}{2}\right) \times
    B_r\left(\frac{\delta_1 +\delta_2\circ\delta_1(x)}{2}\right)
    \times\dotsb \\
    \dotsb\times
    B_r\left(\frac{\delta_{m-2}\circ\cdots\circ\delta_1(x) +
    \delta_{m-1}\circ\cdots\circ\delta_1 (x)}{2}\right),
\end{multline*}
that is $B_r(x,\boldsymbol{\delta}) = \prod_j B_r(w_j)$ where the
$m$-tuple $\mathbf{w}$ is associated to the discrete trajectory
$(x,\delta_1(x),\dotsc,\delta_{m-1}\circ\cdots\circ\delta_1(x))$
of the discrete dynamic of $\boldsymbol{\delta}$.

\begin{lem}\label{lem:ballcrossing}
    Let $\boldsymbol{\sigma}:=(\sigma_1,\dotsc,\sigma_n)$ be such an $n$-tuple
    and $x\in\C^d$ be a fixed point of $\sigma_n\circ\dotsb\circ\sigma_1$.
    Suppose there exists a sequence $(m_j)_{j\geq 0}$
    such that there exists a sequence $(\mathbf{w}^j)_{j\geq 0}$
    with $\mathbf{w}^j\in B_r(x,(\boldsymbol{\sigma}^{m_j},\id))\setminus 
    B_{r/2}(x,(\boldsymbol{\sigma}^{m_j},\id))$
     satisfying,
     \begin{equation}\label{eq:lim}
         \left|\nabla F^{m_j}\left(\mathbf{w}^j\right)\right|^2 
        = \sum_{k=1}^{m_j n +1}
        \left|\partial_{w_k}F^{m_j}\left(\mathbf{w}^j\right)\right|^2
        \xrightarrow{j\to\infty} 0.
    \end{equation}
    Let $(a_1,\dotsc,a_n)\in(\C^d)^n$ be such that
    $B_r(x,\boldsymbol{\sigma}) = B_r(a_1)\times\cdots\times B_r(a_n)$.
    Then, there exists a sequence $(z_j)_{j\in\Z}\in(\C^d)^\Z$ and
    some integer $1\leq q\leq n$ such that
    $z_{j+1} = \sigma_j(z_j)$ 
    with 
    \begin{equation}\label{eq:in}
        \left\{
            \begin{array}{l l}
            \frac{z_j + \sigma_j (z_j)}{2} \in
            \overline{B^{2d}_r(a_{j\bmod n})}
            & \text{ for all } j\in\Z, \\
            \frac{z_q + \sigma_q (z_q)}{2}\not\in B^{2d}_{r/2}(a_q)
            \text{ or } z_q =z_1 \not\in
            B^{2d}_{r/2}(x).
            &
            \end{array}
        \right.
    \end{equation}
\end{lem}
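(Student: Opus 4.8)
The plan is to extract a convergent subsequence from the points $\mathbf{w}^j$ and to show the limit traces out a genuine discrete trajectory of $\boldsymbol{\sigma}$ that stays in the closed balls but exits one of the half-radius balls. The starting observation is that, by Proposition~\ref{prop:gf}, the components of $\nabla F^{m_j}$ at a point $\mathbf{w}^j = A_{m_j}(\mathbf{v}^j)$ are exactly $i(z^j_k - \sigma_{k-1}(z^j_{k-1}))$, where $\mathbf{z}^j := \psi^{-1}(\mathbf{w}^j)$ is the associated ``almost-trajectory'' and indices are read cyclically in $\Z/m_jn+1$. Thus the hypothesis \eqref{eq:lim} says precisely that the discrete trajectory defect $\sum_k |z^j_k - \sigma_{k-1}(z^j_{k-1})|^2$ tends to $0$.

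**Key steps.** First I would pass to the ``centered'' picture: since each $\mathbf{w}^j$ lies in the annulus $B_r(x,(\boldsymbol{\sigma}^{m_j},\id))\setminus B_{r/2}(x,(\boldsymbol{\sigma}^{m_j},\id))$, there is an index $k_j$ (which I may take with $1\le k_j\le n$ by cyclic symmetry of $\boldsymbol{\sigma}^{m_j}$ and shifting which copy of $\boldsymbol{\sigma}$ we call the first) at which the $k_j$-th coordinate $w^j_{k_j}$ fails to lie in $B_{r/2}(a_{k_j})$, while all coordinates lie in the corresponding closed $r$-balls $\overline{B_r(a_{\cdot})}$. After relabeling I may assume $k_j = q$ for a fixed $q$ along a subsequence. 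Next, run the trajectory forwards and backwards from index $q$: because the $\sigma_k$ are fixed diffeomorphisms and the defects $|z^j_k - \sigma_{k-1}(z^j_{k-1})|$ go to $0$, and because $z^j_q$ (being close to $w^j_q$, hence within a bounded region) lies in a compact set, a diagonal extraction gives a subsequence along which $z^j_\ell \to z_\ell$ for every fixed $\ell\in\Z$ — here one uses that $\sigma_k(z) = \lim$ of the iterates is continuous, so passing to the limit in $z^j_{\ell+1} = \sigma_\ell(z^j_\ell) + o(1)$ yields $z_{\ell+1} = \sigma_\ell(z_\ell)$ exactly. The closed-ball constraints pass to the limit by continuity of $(z,Z)\mapsto\frac{z+Z}{2}$, giving the first line of \eqref{eq:in}; and the strict exclusion $w^j_q \notin B_{r/2}(a_q)$ passes to the limit as the non-strict $\frac{z_q+\sigma_q(z_q)}{2}\notin B_{r/2}(a_q)$ — note $\frac{z_q+\sigma_q(z_q)}{2} = \lim w^j_q$ once the defect vanishes — unless $q$ was the wrap-around index, i.e. the constraint that actually failed was the ``$z_1\notin B_{r/2}(x)$'' one, which is the alternative in the second line of \eqref{eq:in}.

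**Main obstacle.** The delicate point is the compactness needed for the diagonal extraction: a priori the trajectory $(z^j_\ell)$ is only controlled near index $q$ (where $w^j_q$, hence $z^j_q$, sits in a fixed ball), and one must propagate this control to all fixed indices $\ell$ uniformly in $j$ — but the number of indices $m_jn+1$ is going to infinity, so one cannot simply invoke compactness of the whole product. The resolution is that for each \emph{fixed} $\ell$, once $j$ is large the index $\ell$ is within a bounded distance of $q$ (say after re-centering the cyclic labels at $q$), so $z^j_\ell$ is obtained from $z^j_q$ by at most $|\ell-q|$ applications of maps of the form $\sigma_k$ or $\sigma_k^{-1}$ up to an $o(1)$ error, hence stays in a compact set depending only on $\ell$; a standard diagonal argument over $\ell\in\Z$ then produces the desired subsequence. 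One must also be a little careful that the ``forward'' and ``backward'' propagation are consistent — i.e. that $\sigma_\ell$ is invertible with continuous inverse on the relevant region — which holds since the $\sigma_k$ are diffeomorphisms of $\C^d$. Everything else (continuity of the ball constraints under limits, the bookkeeping of indices mod $n$ versus mod $m_jn+1$) is routine.
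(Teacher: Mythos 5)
Your proposal is correct and follows essentially the same route as the paper's proof: locate an index $q$ where the half‑radius constraint fails, re‑center the discrete trajectory there (shifting by a multiple of $n$ and treating the wrap‑around slot separately), and then run a Cantor diagonal argument to produce a genuine $\boldsymbol{\sigma}$‑trajectory over all of $\Z$ satisfying \eqref{eq:in}.

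One clarification on your ``main obstacle'' paragraph: you do not need to propagate compactness outward from the index $q$. The hypothesis $\mathbf{w}^j \in B_r(x,(\boldsymbol{\sigma}^{m_j},\id))$ already places \emph{every} coordinate $w^j_k$ in the fixed closed ball $\overline{B_r(a_{k\bmod n})}$, and since $z^j_k$ is recovered from $w^j_k$ by the fixed coordinatewise diffeomorphism defined through \eqref{eq:egf}, each $z^j_k$ already lies in a fixed compact set depending only on $k\bmod n$, uniformly in $j$. Thus the diagonal extraction is immediate, with no need to invoke invertibility of the $\sigma_k$'s or backward propagation; the paper uses exactly this observation (after extending the re‑centered sequences $m_j n$‑periodically). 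Your propagation argument is not wrong, merely an unnecessary detour. The paper also handles separately the case where $(m_j)$ has a bounded subsequence — a simpler direct compactness argument without diagonalization — but your unified treatment covers that case as well.
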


Remark that Proposition~\ref{prop:gf} and (\ref{eq:lim}) imply
\begin{equation*}
    |z^j_k - \sigma_{k-1}(z^j_{k-1})| \xrightarrow{j\to\infty} 0 \text{ for }
    1 < k \leq m_j n +1
    \quad \text{and}\quad
    |z^j_1 - z^j_{m_j n +1}| \xrightarrow{j\to\infty} 0,
\end{equation*}
where $\mathbf{z}^j$ is the discrete trajectory associated to $\mathbf{w}^j$
\emph{via} relations (\ref{eq:egf}).
Indeed, $\partial_{v_k}F^{m_j} = \frac{1}{2}(\partial_{w_k}F^{m_j}
+\partial_{w_{k-1}}F^{m_j})$.
Thus, the proof essentially consists in an elementary application
of the Cantor's diagonal argument to ultimately get
a discrete trajectory of the dynamic $\sigma_n\circ\cdots\circ\sigma_1$
whose special property (\ref{eq:in}) comes from the 
domain of the $\mathbf{w}^j$'s.

\begin{proof}
    We first prove the case where $(m_j)_{j\geq 0}$ admits a bounded
    subsequence for a better understanding of the general case.
    Taking an extracted subsequence, we might suppose
    that $m_j \equiv m\in\N^*$.
    Then by relative compactness, we might suppose that
    $\mathbf{w}^j \to \mathbf{w} \in
    \overline{B_r(x,(\boldsymbol{\sigma}^m,0))}\setminus 
    B_{r/2}(x,(\boldsymbol{\sigma}^m,0))$.
    Let us define $(z'_j)_{1\leq j\leq mn+1}\in(\C^d)^{mn+1}$ by the relations
    (\ref{eq:egf}) for $\mathbf{w}$.
    According to Proposition~\ref{prop:gf},
    since $\nabla F^m(A_0^{-1}\mathbf{v})=0$, one has
    $z'_{j+1}=\sigma_{j \bmod n}(z'_j)$
    for $1\leq j\leq mn$ and
    $z'_1 = z'_{mn+1}$.
    Since $w\not\in B_{r/2}(x,(\boldsymbol{\sigma}^m,0))$,
    there is some integer $1\leq q'\leq mn+1$
    such that 
    $w_{q'}\not\in B^{2d}_{r/2}(a_{q'\bmod n})$ if $q'\neq mn+1$
    or $w_{q'}\not\in B^{2d}_{r/2}(x)$ otherwise.
    If $q'\neq mn+1$,
    let $k\in\N$ be such that $kn+1\leq q' <(k+1)n+1$
    and let $1\leq q\leq n$ be the integer $q = q'-kn$.
    The wanted sequence $(z_j)_{j\in\Z}$ is then the $mn$-periodic sequence
    such that 
    \begin{equation*}
        z_j = z'_{j+kn}, \quad -kn +1\leq j\leq (m-k)n.
    \end{equation*}
    In this case, 
    \begin{equation*}
        \frac{z_q + \sigma_q (z_q)}{2} = w_{q'} \not\in B^{2d}_{r/2}(a_q).
    \end{equation*}
    If $q' = mn+1$, then the wanted sequence is the $mn$-periodic sequence
    such that $z_j = z'_j$ for $1\leq j\leq mn$ with $q=1$ and in this case 
    \begin{equation*}
        z_q = z_1 = w_{mn+1} \not\in B^{2d}_{r/2}(x).
    \end{equation*}

    Now suppose that $(m_j)_{j\geq 0}$ admits no bounded infinite subsequence.
    Taking an extracted subsequence, we might suppose that $(m_j)$ is
    increasing.
    For all $j\in\N$, let $1\leq q'_j< m_j n +1$ be such that
    $w^j_{q'_j}\not\in B_{r/2}(a_{q'_j\bmod n})$
    or $q'_j = m_j n +1$ if such an integer does not exist.
    Similarly to the bounded case,
    we first suppose that we can take an extracted subsequence
    $q'_j \neq m_j n +1$ for all $j\geq 0$.
    Let $k_j\in\N$ be such that $k_j n +1 \leq q'_j < (k_j+1)n+1$
    and let $1\leq q_j \leq n$ be the integer $q_j = q'_j - k_j n$.
    Taking an extracting subsequence, we might suppose $q_j \equiv q$.
    For all $j\geq 0$, let $(\mathbf{w}'^j)\in (\C^d)^\Z$
    be the $m_j n$-periodic sequence such that
    $w'^j_{k} = w^j_{k+k_j n}$ for $-k_j n+1\leq j\leq (m_j-k_j)n$.

    Let $M\in\N^*$ and let us consider the sequence
    $(\mathbf{w}^{M,j})_{j\geq 0}$ in $(\C^{d})^{2M+1}$
    defined by restriction: for all $j\geq 0$,
    $(w^{M,j}_k)_{-M-1\leq k\leq M}:=(w'^j_k)_{-M-1\leq k\leq M}$.
    Now, we can extract a subsequence $j^M_p \to \infty$,
    such that $(\mathbf{w}^{M,j^M_p})_p$ converges.
    Since $\partial_{w_{k+k_j n}}F^{m_j}(\mathbf{w}^j)\to 0$ for
    all $-M-1\leq k\leq M$,
    the associated $(2M+1)$-tuple $(z_j)_{-M\leq j\leq M}$,
    now satisfies $z_{j+1} =\sigma_j(z_j)$
    for $-M\leq j\leq M-1$.
    By a diagonal extraction associated to
    subsequences $(j^M_p)_p$ as $M$ goes to infinity,
    we extend our $(2M+1)$-tuples $(z_j)$
    to a sequence in $\Z$ with the wanted properties.
    In particular $\frac{z_q +\sigma_q (z_q)}{2} \not\in B^{2d}_{r/2}(a_q)$.

    If one cannot extract a subsequence such that $q'_j\neq m_j n+1$,
    we can extract a subsequence such that $q'_j \equiv m_j n +1$.
    Then take $q=1$ and define $(\mathbf{w}'^j)$ to be the $m_j n$-periodic
    sequence such that $w'^j_k = w^j_k$ for $1\leq k\leq m_j n$.
    By the same way as above, one gets the wanted $(z_j)$
    by a diagonal extraction and in this case
    $z_q = z_1 \not\in B^{2d}_{r/2}(x)$.
\end{proof}

\begin{thm}\label{thm:ballcrossing}
    Let $\Phi\in\ham(\C^d)$ admitting $C^1$-small $n$-tuples $\boldsymbol{\sigma}$.
    Suppose that $x\in\C^d$ is a fixed point  of $\Phi$
    which is isolated as an invariant set.
    Then for sufficiently small $r>0$, there exists $c_\infty >0$
    and a $n$-tuple $\boldsymbol{\sigma}$ associated to $\Phi$,
    with $n$ even,
    such that for all $m\geq 1$, any gradient or reverse-gradient flow line
    $\mathbf{u}:\R\to (\C^d)^{mn+1}$,
    $\dot{\mathbf{u}}=\pm\nabla F^m(\mathbf{u})$, with
    $\mathbf{u}(0)\in \partial B_r(x,(\boldsymbol{\sigma}^m,\id))$
    and $\mathbf{u}(\tau)\in B_{r/2}(x,(\boldsymbol{\sigma}^m,\id))$
    for some $\tau\in\R$ satisfies
    \begin{equation*}
        |F^m(\mathbf{u}(0)) - F^m(\mathbf{u}(\tau))| > c_\infty.
    \end{equation*}
\end{thm}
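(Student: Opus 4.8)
The plan is to prove the statement by contradiction, reducing everything to Lemma~\ref{lem:ballcrossing} together with the hypothesis that $\{x\}$ is isolated as an invariant set. First I would fix, once and for all, a $C^1$-small $n$-tuple $\boldsymbol{\sigma}=(\sigma_1,\dots,\sigma_n)$ associated to $(\Phi_t)$ with $n$ even (possible after refining the time-partition; the evenness makes the number $mn+1$ of variables of $F^m$ odd, so that $F^m$ is a genuine generating function for every $m$). Write $x_1:=x$, $x_{j+1}:=\sigma_j(x_j)$, so that $x_{n+1}=\Phi(x)=x$, and set $a_j:=\tfrac{x_j+\sigma_j(x_j)}{2}$; the center of the polydisc $B_r(x,(\boldsymbol{\sigma}^m,\id))$ is then the tuple obtained by repeating $(a_1,\dots,a_n)$ $m$ times and appending $x$. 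Since every map $z\mapsto\tfrac{z+\sigma_j(z)}{2}$ is a diffeomorphism $C^1$-close to the identity, I would pick $r_0>0$ so small that, for all $r\le r_0$, any point $z$ with $\tfrac{z+\sigma_1(z)}{2}\in\overline{B_r(a_1)}$ lies in the neighborhood $U$ of $x$ furnished by the definition of ``isolated as an invariant set'' applied to the homeomorphism $\Phi$. The theorem is then proved for every $r\le r_0$.

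Fix such an $r$ and assume no $c_\infty>0$ works: there are integers $m_j\ge1$ and gradient or reverse-gradient flow lines $\mathbf{u}^j:\R\to(\C^d)^{m_jn+1}$, $\dot{\mathbf{u}}^j=\pm\nabla F^{m_j}(\mathbf{u}^j)$, with $\mathbf{u}^j(0)\in\partial B_r(x,(\boldsymbol{\sigma}^{m_j},\id))$, $\mathbf{u}^j(\tau_j)\in B_{r/2}(x,(\boldsymbol{\sigma}^{m_j},\id))$ and $E_j:=|F^{m_j}(\mathbf{u}^j(0))-F^{m_j}(\mathbf{u}^j(\tau_j))|\to0$; reversing time we may take $\tau_j>0$. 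Isolating the sub-interval $[s_1,s_2]\subset[0,\tau_j]$ on which $\mathbf{u}^j$ passes from $\partial B_r$ to $\partial B_{r/2}$ while remaining in the closed annulus, one has on the one hand $\int_{s_1}^{s_2}|\nabla F^{m_j}(\mathbf{u}^j)|\,\ud s=\longueur\bigl(\mathbf{u}^j|_{[s_1,s_2]}\bigr)\ge r/2$, because at least one coordinate of $\mathbf{u}^j$ travels from distance $r$ to distance $\le r/2$ from the corresponding center; and on the other hand $E_j\ge\int_{s_1}^{s_2}|\nabla F^{m_j}(\mathbf{u}^j)|^2\,\ud s$, since $s\mapsto F^{m_j}(\mathbf{u}^j(s))$ is monotone. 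If $|\nabla F^{m_j}(\mathbf{u}^j)|$ were bounded below by some $c_j$ throughout $[s_1,s_2]$, these two facts would force $E_j\ge c_j\,r/2$; choosing $c_j:=\sqrt{E_j}+2E_j/r\to0$ (so that $c_j\,r/2>E_j$) and using continuity to stay away from the endpoints, I obtain a time $s^*_j\in(s_1,s_2)$ with $\mathbf{w}^j:=\mathbf{u}^j(s^*_j)\in B_r(x,(\boldsymbol{\sigma}^{m_j},\id))\setminus B_{r/2}(x,(\boldsymbol{\sigma}^{m_j},\id))$ and $|\nabla F^{m_j}(\mathbf{w}^j)|<c_j\to0$.

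The sequence $(\mathbf{w}^j)$ satisfies exactly the hypotheses of Lemma~\ref{lem:ballcrossing}, which produces a complete discrete trajectory $(z_j)_{j\in\Z}$, $z_{j+1}=\sigma_j(z_j)$, with $\tfrac{z_j+\sigma_j(z_j)}{2}\in\overline{B_r(a_{j\bmod n})}$ for all $j$ and satisfying (\ref{eq:in}). Then $(z_{kn+1})_{k\in\Z}$ is a complete $\Phi$-orbit all of whose terms lie in $U$ by the choice of $r_0$, hence $z_{kn+1}=x$ for every $k$ by isolation, and the recursion forces $z_j=x_j$ for every $j$; but then $\tfrac{z_q+\sigma_q(z_q)}{2}=a_q\in B_{r/2}(a_q)$ and $z_1=x\in B_{r/2}(x)$, contradicting (\ref{eq:in}), which completes the argument. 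The analytic content here (length versus energy of a flow line) is elementary; the step that requires care is translating ``isolated as an invariant set'', a condition on $\Phi$, into the multi-step discrete dynamics handled by Lemma~\ref{lem:ballcrossing} — this is precisely where the $C^1$-smallness of $\boldsymbol{\sigma}$ and the smallness of $r$ are used, to keep the intermediate points $z_j$ near $x_j$.
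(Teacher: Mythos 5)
Your proof is correct and follows essentially the same route as the paper: argue by contradiction, use a length-versus-energy estimate along the (reverse-)gradient line restricted to the annulus to extract a sequence $\mathbf{w}^j$ in the annulus with $|\nabla F^{m_j}(\mathbf{w}^j)|\to 0$, feed it into Lemma~\ref{lem:ballcrossing}, and contradict the isolation of $\{x\}$ as an invariant set. Two places where your write-up is actually a bit cleaner than the paper's: (a) you produce $s^*_j$ by directly choosing $c_j\to 0$ with $c_j r/2>E_j$, whereas the paper first shows $\tau_j\to\infty$ and then invokes the mean-value/averaging argument — both work, yours is shorter; (b) you handle the final contradiction by observing that the map $z\mapsto\tfrac{z+\sigma_1(z)}{2}$ is a diffeomorphism near $x$, so the preimage of $\overline{B_{r_0}(a_1)}$ fits inside the isolating neighborhood, and then the recursion $z_j=x_j$ kills both alternatives in \eqref{eq:in}. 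This is a more careful treatment of the point that the polydisc centers $a_j$ are the discrete trajectory of $x$ under $\boldsymbol{\sigma}$ rather than $x$ itself; the paper's displayed inequality $|z_j+\sigma_j(z_j)-2x|\leq R$ implicitly conflates $a_j$ with $x$, which your argument avoids.
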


\begin{proof}
    Since $x$ is isolated as an invariant set, there exists some $R>0$ such that
    for all $z\in B^{2d}_R(x)\setminus\{x\}$, there exists $k\in\Z$ such that
    $\Phi^k(z)\not\in B^{2d}_R(x)$.
    Fix such an $R>0$ and choose an even
    tuple $\boldsymbol{\sigma}=(\sigma_1,\dotsc,\sigma_n)$
    such that $|z-\sigma_j(z)|<R/8$ for all $z\in B^{2d}_R(x)$.
    Let $m\geq 1$ and $\mathbf{u}:\R_+ \to (\C^d)^{mn+1}$ be as the statement of the theorem,
    we may suppose that $\mathbf{u}$ takes its values in
    $B_{R/2}(x,(\boldsymbol{\sigma}^m,\id))$.
    Let $\tau > 0$ be such that
    $\mathbf{u}(\tau)\in\partial B_{R/4}(x,(\boldsymbol{\sigma}^m,\id))$.
    In order to prove the theorem, it is enough to show that
    there exists $c_\infty>0$ independent of $m\geq 1$ and $\mathbf{u}$ satisfying
    \begin{equation*}
        |F^m(\mathbf{u}(0)) - F^m(\mathbf{u}(\tau))| > c_\infty.
    \end{equation*}

    By contradiction, suppose there exists a sequence $(m_j)_{j\geq 0}$ and
    a sequence of gradient or reverse-gradient flow lines
    $\mathbf{u}^j :[0,\tau_j]\to B_{R/2}(x,(\boldsymbol{\sigma}^m,\id))$,
    $\dot{\mathbf{u}}^j = \pm\nabla F^{m_j}(\mathbf{u}^j)$,
    with $\mathbf{u}^j(0)\in\partial B_{R/2}(x,(\boldsymbol{\sigma}^m,\id))$
    and $\mathbf{u}^j(\tau_j)\in\partial B_{R/4}(x,(\boldsymbol{\sigma}^m,\id))$
    such that
    \begin{equation*}
        \left| F^{m_j}(\mathbf{u}^j(0)) - F^{m_j}(\mathbf{u}^j(\tau_j)) \right| \to 0.
    \end{equation*}
    For some $1\leq k_j\leq m_j n+1$,
    one has $|u^{j}_{k_j}(0) - u^j_{k_j}(\tau_j)|\geq R/4$ so
    \begin{equation*}
        R/4 \leq \int_0^{\tau_j} |\dot{u}^j_{k_j}(s)|\ud s
        \leq \int_0^{\tau_j} |\dot{\mathbf{u}}^j(s)|\ud s
        = \int_0^{\tau_j} |\nabla F^{m_j}(\mathbf{u}^j(s))|\ud s,
    \end{equation*}
    but
    \begin{multline*}
        \left(\int_0^{\tau_j}|\nabla F^{m_j}(\mathbf{u}^j(s))|\ud s\right)^2
        \leq \tau_j\int_0^{\tau_j}|\nabla F^{m_j}(\mathbf{u}^j(s))|^2\ud s \\
        = \tau_j \left(F^{m_j}(\mathbf{u}^j(0)) - F^{m_j}(\mathbf{u}^j(\tau_j))\right),
    \end{multline*}
    thus $\tau_j \to +\infty$. Combined with
    $\int_0^{\tau_j}|\nabla F^{m_j}(\mathbf{u}(s))|^2\ud s \to 0$,
    it implies that there exists a sequence $(s_j)_{j\geq 0}$
    with $s_j \in [0,\tau_j]$, such that
    the sequence $(\mathbf{u}^j(s_j)))_{j\geq 0}$ satisfies
    the hypothesis of Lemma~\ref{lem:ballcrossing} with $r=R/2$.

    Therefore, according to Lemma~\ref{lem:ballcrossing}, there exists a sequence
    $(z_j)_{j\in\Z}\in(\C^d)^\Z$ and some integer $1\leq q\leq n$,
    such that $|z_j+\sigma_j(z_j)-2x|\leq R$
    which implies
    that $|z_j - x|\leq R/2 + R/16 < R$
    by the specific choice of $\sigma_j$,
    $|z_q - x| > R/4$
    and $z_{j+1} = \sigma_j (z_j)$.
    Thus, for all $k\in\Z$, $\Phi^k(z_1) = z_{kn+1} \in B^{2d}_R (x)$ with 
    $z_1 \neq x$
    since
    \begin{equation*}
        \sigma_{q-1}\circ\cdots\circ\sigma_1 (z_1) = z_q
        \neq x = \sigma_{q-1}\circ\cdots\circ\sigma_1 (x) ,
    \end{equation*}
    a contradiction.
\end{proof}

\subsection{Crossing energy theorem in $\CP^d$}

We employ the notation of Section~\ref{se:proof}.  We recall that
$\boldsymbol{\sigma}=(\sigma_1,\dotsc,\sigma_{n_1})$ is a specific $n_1$-tuple,
with $n_1$ even, associated to $\Phi$, $\boldsymbol{\delta}_t =
(g_{t/(n_2-1)},\dotsc,g_{t/(n_2-1)},\id)$ is a $n_2$-tuple, with $n_2$ odd, associated to
$e^{-2i\pi t}$, $F^m_{1,t} = F_{(\boldsymbol{\sigma}^m,\boldsymbol{\delta}_t)}$
is a conical generating function of the conical Hamiltonian diffeomorphism
$e^{-2i\pi t}\Phi^m$, $M_m := \{(t,[z])\in I\times\CP^{N(m)}\ |\ F^m_t(z) =
0\}$ is the domain of the projection map $\action^m:M_m\to I$ with $N(m) =
(d+1)(n_1 m+n_2)-1$.  Similarly to the $\C^d$-case, we apply a linear change of
coordinates and study the function $F^m_t := F^m_{1,t}\circ A_m^{-1}$ and by a
slight abuse of notation we will still denote by $M_m$ and $\action^m$ domains
and functions seen in the induced projective chart.

The proof of the crossing energy theorem in $\CP^d$
follows the same lines as the $\C^d$ case.
First, we need an analogue to Lemma~\ref{lem:ballcrossing}.
We have to define a neighborhood of $\CP^{N(m)}$
similar to $B(x,(\boldsymbol{\sigma}^m,\id))$ in the $\C^d$ case.
Let $B_1\subset \C^{d+1}$ be the unit euclidean ball
centered at the origin,
so that, for $k\in\N^*$, $\partial (B_1^k)\subset(\C^{d+1})^k$ denotes
the sphere
\begin{equation*}
    \partial (B_1^k) = \bigcup_{1\leq j\leq k} B_1^{j-1}\times \sphere{2d+1}
    \times B_1^{k-j}.
\end{equation*}
Let $\pi_m : \partial(B_1^{mn_1 +n_2})\to\CP^{N(m)}$
be the quotient map by the diagonal action of $S^1$.
We define now a $S^1$-equivariant neighborhood
in the sphere $\partial(B_1^{mn_1+n_2})$
of the normalized $w$-coordinates of some point $x\in\C^{d+1}\setminus 0$
relative to $F^m_t$.
Let $\mathbf{a}=(a_1,\dotsc,a_{mn_1+n_2})\in(\C^{d+1})^{mn_1+n_2}$ be
the $w$-coordinates of $x$, that is
$B_r(x,(\boldsymbol{\sigma}^m,\boldsymbol{\delta}_t)) = \prod_j B_r(a_j)$.
Let $\lambda>0$ such that $\lambda \mathbf{a}\in\partial(B_1^{mn_1+n_2})$.
For $r>1$ we define
\begin{equation*}
    U_r(x,m,t) := S^1\cdot B_r(\lambda x,(\boldsymbol{\sigma}^m,\boldsymbol{\delta}_t))\cap
    \partial(B_1^{mn_1+n_2}),
\end{equation*}
where $S^1\cdot E := \{ \mu z\ |\ z\in E,\ \mu\in S^1\}$ for any
subset $E\subset (\C^d)^{mn_1 +n_2}$.
Let $V_r(x,m,t)\subset \CP^{N(m)}$ be the projection of
this neighborhood on $\CP^{N(m)}$.

\begin{lem}\label{lem:projballcrossing}
    Let $x\in\C^{d+1}\setminus 0$ be a fixed point of $\Phi$.
    Suppose there exists an increasing sequence
    of positive integers $(m_j)_{j\geq 0}$
    such that there exist
    a sequence $(t_j)_{j\geq 0}$ in $I$
    satisfying $(t_j)\to t\in\{0,1\}$ and
    a sequence $(\mathbf{w}^j)_{j\geq 0}$
    with $\mathbf{w}^j\in U_r(x,m_j,t)\setminus 
    U_{r/2}(x,m_j,t)$
     satisfying,
     \begin{equation*}
         \left|\nabla F^{m_j}_{t_j}\left(\mathbf{w}^j\right)\right|^2 
        = \sum_{k=1}^{m_j n_1 +n_2}
        \left|\partial_{w_k}F^{m_j}_{t_j}\left(\mathbf{w}^j\right)\right|^2
        \xrightarrow{j\to\infty} 0.
    \end{equation*}
    Let $\mathbf{a}:=(a_1,\dotsc,a_{n_1})\in(\C^{d+1})^{n_1}$ be such that
    $B_r(x,\boldsymbol{\sigma}) = \prod_j B_r(a_j)$
    and $\tilde{\mathbf{a}}:=(\tilde{a}_1,\dotsc,\tilde{a}_{n_2})
    \in(\C^{d+1})^{n_2}$ be the $n_2$-tuple
    \begin{equation*}
        \tilde{a}_k := \frac{ g_{(k-1)t/(n_2-1)}(x) +
        g_{kt/(n_2-1)}(x)}{2} \ \text{ for } \
        1\leq k\leq n_2-1 \ \text{ and } \ \tilde{a}_{n_2} = x.
    \end{equation*}
    Then, there exists a possibly infinite integer $\kappa\in\Z\cup\{+\infty\}$
    such that
    there exists a sequence $(b_j)\in(\C^{d+1})^\Z$ defined by
    \begin{equation*}
        b_j :=
        \begin{cases}
             a_{j\bmod n_1}&
            \text{if } j\leq \kappa n_1,\\
             \tilde{a}_{j-\kappa n_1}&
            \text{if } \kappa n_1 +1\leq j\leq \kappa n_1 + n_2,\\
             a_{j-n_2\bmod n_1}&
            \text{if } j\geq \kappa n_1 + n_2 +1,
        \end{cases}
    \end{equation*}
    a sequence $(z_j)_{j\in\Z}\in(\C^{d+1}\setminus 0)^\Z$ satisfying
    \begin{equation*}
        z_{j+1} =
        \begin{cases}
             \sigma_{j\bmod n_1}(z_j)&
            \text{if } j\leq \kappa n_1,\\
             g_{t/n_2}(z_j)&
            \text{if } \kappa n_1 +1\leq j\leq \kappa n_1 + n_2,\\
             \sigma_{j-n_2\bmod n_1} (z_j)&
            \text{if } j\geq \kappa n_1 + n_2 +1,
        \end{cases}
    \end{equation*}
    and some integer $1\leq q\leq n_1+n_2$ such that
    \begin{equation*}
        \left\{
            \begin{array}{l l}
                \frac{z_j +z_{j+1} }{2} \in
            \C\cdot\overline{B^{2d}_r(b_j)}
            & \text{ for all } j\in\Z, \\
            \frac{z_q + z_{q+1}}{2}\not\in \C\cdot B^{2d}_{r/2}(b_q)
            &
            \end{array}
        \right.
    \end{equation*}
\end{lem}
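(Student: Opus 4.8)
The plan is to transcribe the proof of Lemma~\ref{lem:ballcrossing}, the only genuinely new points being the bookkeeping of the fixed-length block $\boldsymbol{\delta}_{t}$ sitting inside the tuple $(\boldsymbol{\sigma}^{m_j},\boldsymbol{\delta}_{t_j})$ and the need to keep the limiting trajectory away from $0$. First I would work on the sphere $\partial(B_1^{m_jn_1+n_2})$, on which every coordinate of $\mathbf{w}^j$ has norm at most $1$. Since the elementary generating functions are $2$-homogeneous and $S^1$-invariant and $A_{m_j}$ is $\C$-linear, the function $F^{m_j}_{t_j}$ is $S^1$-invariant in the $w$-coordinates, so replacing $\mathbf{w}^j$ by $\mu_j^{-1}\mathbf{w}^j$, where $\mu_j\in S^1$ is a phase realising $\mathbf{w}^j\in S^1\cdot B_r(\lambda x,(\boldsymbol{\sigma}^{m_j},\boldsymbol{\delta}_{t}))$, does not change the hypothesis $|\nabla F^{m_j}_{t_j}(\mathbf{w}^j)|\to 0$. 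Applying the condition $\mathbf{w}^j\notin U_{r/2}(x,m_j,t)$ to this particular phase produces an escape index $q'_j$ with $\mu_j^{-1}w^j_{q'_j}\notin B_{r/2}(\lambda a^{(j)}_{q'_j})$, while all the other rotated coordinates lie in $B_r(\lambda a^{(j)}_{k})$. Here the scaling factor $\lambda$ stays bounded and bounded away from $0$ as $j\to\infty$, since the $w$-coordinates $a^{(j)}_k$ of the discrete trajectory of the fixed point $x$ form a finite cyclic sequence of non-zero points, independent of $m_j$ (the $\boldsymbol{\sigma}$-orbit being iterated cyclically and returning to $x$, with $\boldsymbol{\delta}_t$ appending $n_2$ further points).

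Next I would split into cases according to the position of $q'_j$. Reading the index set cyclically, either $q'_j$ stays within bounded distance of the $\boldsymbol{\delta}_t$-block, or this distance tends to $\infty$. In the first case, after extracting a subsequence, $q'_j$ reduces to a fixed residue and the block sits at a fixed offset $\kappa n_1$ from it; in the second case one sets $\kappa=+\infty$ and the block is never seen. In either case re-index so that the chosen feature occupies a fixed position, restrict the rotated $w$-coordinates to windows $[\kappa n_1+1-M,\kappa n_1+n_2+M]$ (resp.\ $[-M,M]$), and use relative compactness together with the Cantor diagonal extraction over $M\to\infty$ already carried out in the proof of Lemma~\ref{lem:ballcrossing} to obtain, via the relations (\ref{eq:egf}), a bi-infinite sequence $(z_j)_{j\in\Z}\in(\C^{d+1})^{\Z}$. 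Since $|\nabla F^{m_j}_{t_j}(\mathbf{w}^j)|\to 0$, Proposition~\ref{prop:gf} forces $z_{j+1}$ to be the relevant transition map applied to $z_j$ — a $\sigma_{\bullet}$ on the $\boldsymbol{\sigma}$-blocks, and, using $t_j\to t$ and the continuity of the elementary quadratic generating functions $q_{t}$ in $t$, a $g_{\bullet}$ on the $\boldsymbol{\delta}_t$-block — while the centres $b_j$ together with the inclusions $\frac{z_j+z_{j+1}}{2}\in\C\cdot\overline{B^{2d}_r(b_j)}$ and $\frac{z_q+z_{q+1}}{2}\notin\C\cdot B^{2d}_{r/2}(b_q)$ are read off directly from the windows and the escape index, with $1\le q\le n_1+n_2$ (the value $q>n_1$ occurring when the escape falls inside the block).

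The step I expect to be the main obstacle is showing that the extracted sequence lies in $\C^{d+1}\setminus 0$. This is where the normalisation onto the sphere is used: because $\lambda$ is bounded away from $0$, all the windowed coordinates $w^j_k$ have norm uniformly comparable to $\lambda|x|$, so the limit satisfies $z_q+z_{q+1}\ne 0$; and since the maps $\sigma_k$ and $g_{t/(n_2-1)}$ are conical — hence fix $0$ and are invertible — a single non-zero entry of $(z_j)$ forces all of them to be non-zero. Granting this, the trichotomy in the statement is precisely the record of whether $\kappa$ is finite (and then of where the block lies) or equals $+\infty$, and all remaining verifications are the ones already performed in the $\C^d$ case.
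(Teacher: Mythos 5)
Your proposal follows the same route as the paper's own (very terse) proof: use the $S^1$-invariance of $|\nabla F^{m_j}_{t_j}|$ and of $U_r(x,m_j,t)$ to replace $\mathbf{w}^j$ by a suitable phase rotation lying in the genuine product of balls $B_r(\lambda x,(\boldsymbol{\sigma}^{m_j},\boldsymbol{\delta}_t))$, observe that $\lambda$ is independent of $m$ (the $w$-trajectory of $x$ is $n_1$-periodic plus a fixed $n_2$-tail), and then run the Cantor diagonal extraction on $\mathbf{w}^j/\lambda$ exactly as in the proof of Lemma~\ref{lem:ballcrossing}. The two genuine novelties you flag — the trichotomy over where the $\boldsymbol{\delta}_t$-block sits relative to the escape index, giving $\kappa\in\Z\cup\{+\infty\}$, and the fact that the limiting bi-infinite sequence lands in $\C^{d+1}\setminus 0$ — are precisely the ``additional calligraphic difficulties'' the paper waves at without spelling out, and your explanation of the latter (positivity of $\lambda$ and closeness of the windowed $w$-coordinates to the nonzero centres $\lambda b_k$ give one nonzero $w_q$, hence one nonzero $z_q$, hence all $z_j\neq 0$ by conicality of the $\sigma_k$'s and $g$) is the right one, modulo the standing smallness assumption $r<\lambda\min_k|a_k|$ inherited from Theorem~\ref{thm:projballcrossing}. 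Correct and same approach; the added bookkeeping is welcome detail rather than a different argument.
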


\begin{proof}
The proof goes on the same lines as Lemma~\ref{lem:ballcrossing}
with just additional calligraphic difficulties,
we will only underline the key changes.

Let $x\in\C^{d+1}\setminus 0$, $\mathbf{a}\in(\C^{d+1})^{n_1}$ and
$\tilde{\mathbf{a}}\in(\C^{d+1})^{n_2}$ satisfying the assumptions of the lemma.
Let $\lambda>0$ be such that $(\lambda
\mathbf{a},\lambda\tilde{\mathbf{a}})\in\partial(B_1^{n_1+n_2})$
(it exists since $x\neq 0$),
then
\begin{equation*}
    U_r(x,m,t) = S^1\cdot \left[\left(\prod_{k=1}^{n_1} B^{2(d+1)}_r(\lambda a_k)\right)^m
    \times \prod_{k=1}^{n_2} B^{2(d+1)}_r(\lambda \tilde{a}_k) \right]\cap 
    \partial (B_1^{mn_1+n_2}).
\end{equation*}
Let $(\mathbf{w}^j)$ be satisfying the assumptions of the lemma.
By $S^1$-invariance of the function $|\partial F^{m_j}_{t_j}|$
and the neighborhood $U_r(x,m_j,t)$,
we can suppose that
\begin{equation*}
    \mathbf{w}^j \in \left(\prod_{k=1}^{n_1} B^{2(d+1)}_r(\lambda a_k)\right)^{m_j}
    \times \prod_{k=1}^{n_2} B^{2(d+1)}_r(\lambda \tilde{a}_k).
\end{equation*}
The result follows from Cantor's diagonal argument applied to
the sequence $(\mathbf{w}^j/\lambda)$
in the same way as in the proof of Lemma~\ref{lem:ballcrossing}.
\end{proof}


In order to state the Crossing energy theorem in $\CP^d$,
we will need to define a ``good'' pseudo-gradient $X_m$
for the function $\action^m$.
For technical reasons,
the projection $\pi_m : \partial(B_1^{m n_1+n_2})\to\CP^{N(m)}$
is the most natural for our problem.
However the sphere $\partial(B_1^{m n_1 +n_2})$ is not smooth,
we thus introduce a smooth $S^1$-invariant sphere
$\Sigma_m \subset (\C^{d+1})^{mn_1 +n_2}$:
\begin{equation*}
    \Sigma_m := \left\{
        \mathbf{z}\in (\C^{d+1})^{mn_1+n_2} \ |\
        \sum_{k=1}^{mn_1 +n_2} |z_k|^{p_m} = 1
    \right\},
\end{equation*}
where $p_m\geq 2$ is chosen such that,
\begin{equation*}
    \forall \mathbf{z}\in\Sigma_m,
    \exists \lambda \in [1,2],\quad
    \lambda \mathbf{z} \in \partial(B_1^{mn_1 + n_2}),
\end{equation*}
(necessarily $(p_m)\to \infty$).
We endow $\CP^{N(m)}$ with the Riemannian metric
induced by the $S^1$-invariant projection
$\pi'_m :\Sigma_m\to\CP^{N(m)}$.
Since 
\begin{equation*}
\textup{dist}(\partial U_r(x,m,t),U_{r/2}(x,m,t))\geq r/2,
\end{equation*}
the condition on $p_m$ implies that
\begin{equation}\label{eq:distV}
\textup{dist}(\partial V_r(x,m,t),V_{r/2}(x,m,t))\geq r/4.
\end{equation}
Let $f^m:I\times \CP^{N(m)}\to\R$ be the $C^1$ function satisfying
$f^m(t,\pi'_m(\mathbf{z})) = F^m_t(\mathbf{z})$ for all $\mathbf{z}\in\Sigma_m$,
so that $M_m = \{ (t,\zeta)\in I\times \CP^{N(m)} \ |\
f^m(t,\zeta)= 0\}$.
The pseudo-gradient $X_m$ of $\action^m$ is defined by
\begin{equation*}\label{eq:X}
    X_m(t,\zeta) := 
    \partial_t f^m(t,\zeta) \nabla f^m(t,\zeta) -
        |\nabla f^m(t,\zeta)|^2 \frac{\partial}{\partial t}
\end{equation*}
We have $\la X_m, -\frac{\partial}{\partial t}\ra \geq 0$ 
with equality if and only if $\nabla f^m = 0$,
that is to say $\ud \action^m = 0$.

\begin{thm}\label{thm:projballcrossing}
    Let $\Phi\in\ham_\C(\C^{d+1})$ be a lift of
    $\varphi\in\ham(\CP^d)$.
    Suppose that $x\in\C^{d+1}\setminus 0$ is a fixed
    point of $\Phi$ such that $[x]\in\CP^d$ 
    is isolated as an invariant set of $\varphi$.
    Then for sufficiently small $r>0$, there exists $c_\infty >0$
    and a tuple $\boldsymbol{\sigma}$ associated to $\Phi$
    such that for all $m\geq 1$,
    if $(t,\zeta^m_t)\in M_m$ denotes the critical
    point of $\action^m$ with
    critical value $t\in\{ 0, 1\}$
    associated to $x$,
    any gradient flow line
    $\mathbf{u}:\R\to M_m$,
    $\dot{\mathbf{u}} =\pm X_m(\mathbf{u})$,
    with $\mathbf{u}(0)\in I\times \partial V_r(x,m,t)$
    and $\mathbf{u}(\tau)\in I\times V_{r/2}(x,m,t)$ for some $\tau\in\R$
    satisfies
    \begin{equation*}
        |\action^m(\mathbf{u}(0)) -\action^m(\mathbf{u}(\tau))| > c_\infty.
    \end{equation*}
    The pseudo-gradient $X_m$ can be replaced by a pseudo-gradient
    $C^0$-close to it,
    \emph{e.g.} a Morse-Smale adapted pseudo-gradient
    if $\action^m$ is a Morse function.
\end{thm}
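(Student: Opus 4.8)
The plan is to adapt, with calligraphic adjustments, the proof of Theorem~\ref{thm:ballcrossing}, using Lemma~\ref{lem:projballcrossing} in place of Lemma~\ref{lem:ballcrossing} and the pseudo-gradient $X_m$ in place of $\nabla F^m$. First I fix the data. Since $[x]$ is isolated as an invariant set of $\varphi$, I choose $R>0$ so small that $B^{2d}_R([x])$ lies in a chart and is an isolating neighborhood of $[x]$ (every $\varphi$-orbit contained in it is the constant orbit $[x]$); then I refine the tuple $\boldsymbol{\sigma}$ (keeping $n_2$ fixed) so that each $\sigma_j$ displaces points of $B^{2d}_R([x])$ by less than $R/8$; finally I shrink the radius $r$ of the statement so that $B^{2d}_r([x])\subset B^{2d}_R([x])$, so that $\overline{V_r(x,m,t)\setminus V_{r/2}(x,m,t)}$ contains no critical point of $\action^m$ for any $m$ (again by isolatedness, since a nearby fixed point of $\varphi$ would have constant orbit inside $B^{2d}_R([x])$), and so that $\action^m$ maps $M_m\cap(I\times V_r(x,m,t))$ into $(t-\varepsilon,t+\varepsilon)$ for some $\varepsilon\to 0$ uniform in $m$. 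I also record that the rotations $g_s$ preserve projective classes and that $g_0=g_1=\id$, so that for $t\in\{0,1\}$ the block $\boldsymbol{\delta}_t$ composes to the identity and acts trivially in $\CP^d$.

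Next I argue by contradiction: if no $c_\infty$ works there are $m_j\geq 1$ and flow lines $\mathbf{u}^j$, $\dot{\mathbf{u}}^j=\pm X_{m_j}(\mathbf{u}^j)$, which — after restricting to a sub-interval $[0,\tau_j]$, as in the proof of Theorem~\ref{thm:ballcrossing} — run from $I\times\partial V_r(x,m_j,t)$ to $I\times\partial V_{r/2}(x,m_j,t)$ inside $I\times\overline{V_r\setminus V_{r/2}}$, with $E_j:=|\action^{m_j}(\mathbf{u}^j(0))-\action^{m_j}(\mathbf{u}^j(\tau_j))|\to 0$. Because $\ud\action^m=\ud t$ and $\langle X_m,-\frac{\partial}{\partial t}\rangle=|\nabla f^m|^2$, one has $E_j=\int_0^{\tau_j}|\nabla f^{m_j}(\mathbf{u}^j)|^2$; from $|X_m|^2=|\nabla f^m|^2\big((\partial_t f^m)^2+|\nabla f^m|^2\big)$ and the bound $|\partial_t f^m|\leq C$ valid uniformly in $m$ and $t$ (it comes only from the $n_2$ quadratic pieces of $\boldsymbol{\delta}_t$, and $n_2$ is fixed), Cauchy--Schwarz gives $\int_0^{\tau_j}|X_{m_j}(\mathbf{u}^j)|\leq C\sqrt{E_j\tau_j}+E_j$; since the $\CP^{N(m_j)}$-projection of $\mathbf{u}^j|_{[0,\tau_j]}$ has length $\geq r/4$ by (\ref{eq:distV}), this forces $\tau_j\to\infty$. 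Hence $\min_{[0,\tau_j]}|\nabla f^{m_j}(\mathbf{u}^j)|^2\leq E_j/\tau_j\to 0$, so there is $s_j\in[0,\tau_j]$ with $\mathbf{u}^j(s_j)\in I\times\overline{V_r\setminus V_{r/2}}$ and $|\nabla f^{m_j}(\mathbf{u}^j(s_j))|\to 0$.

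Then I lift $\mathbf{u}^j(s_j)$ to $\Sigma_{m_j}$ and rescale into $\partial(B_1^{m_j n_1+n_2})$, producing $\mathbf{w}^j\in U_r(x,m_j,t)\setminus U_{r/2}(x,m_j,t)$; using that $F^m_t$ is conical — so that on the lift of $M_m$ one has $\langle\nabla F^m_t,\mathbf{z}\rangle=2F^m_t=0$ and $\langle\nabla F^m_t,i\mathbf{z}\rangle=0$, i.e. $\nabla F^m_t\perp\C\mathbf{z}$ — together with the choice of $p_m$ making $\pi'_m$ uniformly comparable to $\pi_m$, I transfer $|\nabla f^{m_j}(\mathbf{u}^j(s_j))|\to 0$ into $\sum_k|\partial_{w_k}F^{m_j}_{t_j}(\mathbf{w}^j)|^2\to 0$, where $t_j:=\action^{m_j}(\mathbf{u}^j(s_j))\to t\in\{0,1\}$ by the choice of $r$. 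If $(m_j)$ is unbounded I assume it increasing and invoke Lemma~\ref{lem:projballcrossing}: it yields $\kappa\in\Z\cup\{+\infty\}$ and a bi-infinite discrete trajectory $(z_j)\in(\C^{d+1}\setminus 0)^\Z$ following $\boldsymbol{\sigma}$ away from a finite $\boldsymbol{\delta}_t$-block, with $\frac{z_j+z_{j+1}}{2}\in\C\cdot\overline{B_r(b_j)}$ for all $j$ and $\frac{z_q+z_{q+1}}{2}\notin\C\cdot B_{r/2}(b_q)$ for some $q$. Since $t\in\{0,1\}$ the $\boldsymbol{\delta}_t$-block is projectively trivial, so $([z_j])$ is a bi-infinite $\varphi$-orbit contained in $B^{2d}_R([x])$ (first condition plus the refinement of $\boldsymbol{\sigma}$) with $[z_q]\neq[x]$ (second condition), contradicting isolatedness. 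If instead $(m_j)$ is bounded, pass to a constant subsequence $m_j\equiv m$: the $\mathbf{u}^j(s_j)$ stay in the fixed compact set $I\times\overline{V_r\setminus V_{r/2}}$ and subconverge to a critical point of $\action^m$ there, but we arranged there is none — again a contradiction. Finally, all the inequalities being strict, $X_m$ may be replaced by any $C^0$-close pseudo-gradient, in particular a Morse--Smale one when $\action^m$ is a Morse function.

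I expect the main obstacle to be exactly the uniform-in-$m$ comparison in the third step between the pseudo-gradient $X_m$, built from the $\pi'_m$-gradient $\nabla f^m$, and the Euclidean partial derivatives $\partial_{w_k}F^m_t$ feeding Lemma~\ref{lem:projballcrossing}: the smooth spheres $\Sigma_m$, the growing exponents $p_m$ and the estimate (\ref{eq:distV}) are precisely the devices that keep the relevant constants (the crossing-length lower bound, the bound on $\partial_t f^m$, the distortion of $\pi'_m$) independent of $m$, and carrying this through the rescaling $\Sigma_m\to\partial(B_1^{m n_1+n_2})$ while exploiting the conical normalization $F^m_t|_{M_m}=0$ is the delicate bookkeeping of the argument.
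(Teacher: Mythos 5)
Your argument mirrors the paper's proof almost line by line: same contradiction setup, same length/energy estimate via (\ref{eq:distV}) and the uniform bound $|\partial_t f^m|\leq C$ to force $\tau_j\to\infty$, same extraction of $s_j$ with vanishing pseudo-gradient, same lift to $\Sigma_{m_j}$ and rescaling to $\partial(B_1^{m_j n_1+n_2})$ using the vanishing radial component $\langle\mathbf{w}^j,\nabla F_{t^j}(\mathbf{w}^j)\rangle=2F_{t^j}(\mathbf{w}^j)=0$, and the same final application of Lemma~\ref{lem:projballcrossing} to manufacture a nonconstant bi-infinite $\varphi$-orbit inside the isolating neighborhood.

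Two small points are worth noting. First, you make explicit the case where $(m_j)$ admits a bounded subsequence, resolving it by compactness of $\CP^{N(m)}$ (producing a critical point of $\action^m$ in the closed annulus, which was excluded by the choice of $r$). The paper passes directly to Lemma~\ref{lem:projballcrossing}, whose stated hypothesis is an \emph{increasing} sequence $(m_j)$; the authors presumably regard the bounded case as subsumed because the proof of Lemma~\ref{lem:projballcrossing} ``goes on the same lines as Lemma~\ref{lem:ballcrossing}'' which does treat it, but your explicit treatment is cleaner. Second, both you and the paper need $t_j=\action^{m_j}(\mathbf{u}^j(s_j))\to t\in\{0,1\}$: the paper simply adjoins ``$\action^{m_j}(\mathbf{u}^j(0))\to 0$'' to the contradiction hypotheses, while you assert that $\action^m$ maps $M_m\cap(I\times V_r)$ into a small interval around $t$ uniformly in $m$. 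Both formulations leave the same minor verification to the reader (that the $\boldsymbol{\delta}_\tau$-block of the generating function pins the $\tau$-coordinate near $t$ on the $w$-tube $V_r$, uniformly in $m$); neither is a genuinely different route, and neither spells it out. Overall your proposal is a correct reconstruction of the paper's argument.
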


\begin{proof}
    We follow the steps of the proof of Theorem~\ref{thm:ballcrossing}.
    By contradiction, suppose there exists a sequence $(m_j)_{j\geq 0}$
    and a sequence of pseudo-gradient flow line $\mathbf{u}^j : [0,\tau_j]\to
    U_r(x,m_j,t)$, $\dot{\mathbf{u}}^j =\pm X_{m_j}(\mathbf{u}^j)$ with
    $\mathbf{u}^j(0)\in \partial V_r (x,m_j,t)$ and
    $\mathbf{u}^j(\tau_j) \in V_{r/2} (x,m_j,t)$ such that
    \begin{equation*}
        \left|\action^{m_j}(\mathbf{u}^j(0)) -
        \action^{m_j}(\mathbf{u}^j(\tau_j))\right|
        \xrightarrow{j\to+\infty} 0
        \quad \text{ and }\quad
        \action^{m_j}(\mathbf{u}^j(0))
        \xrightarrow{j\to+\infty} 0.
    \end{equation*}
    First we must show that $\tau_j\not\to 0$.
    Let $p_2 : I\times\CP^N \to \CP^N$ be the projection
    on the second factor,
    then (\ref{eq:distV}) implies that
    \begin{equation*}
        \frac{r}{4} \leq \int_0^{\tau_j} |\ud p_2 \cdot \dot{\mathbf{u}}^j |\ud s,
    \end{equation*}
    so
    \begin{equation*}
        \left(\frac{r}{4}\right)^2 \leq \tau_j
        \int_0^{\tau_j} |\ud p_2\cdot X_{m_j}(\mathbf{u}^j)|^2 \ud s
        = \tau_j
        \int_0^{\tau_j} 
        (\partial_t f^{m_j}(\mathbf{u}^j))^2
        |\nabla_\zeta f^{m_j}(\mathbf{u}^j) |^2 \ud s.
    \end{equation*}
    Remark that there exists some $C>0$ independent of $m$
    (it only depends on $(\boldsymbol{\delta}_t)$) such that $0\leq -\partial_t f^{m} < C$,
    thus
    \begin{equation*}
        \int_0^{\tau_j} |\ud p_2\cdot X_{m_j}(\mathbf{u}^j)|^2 \ud s
        \leq C^2 \int_0^{\tau_j} |\nabla_\zeta f^{m_j}(\mathbf{u}^j)|^2\ud s.
    \end{equation*}
    This last term goes to $0$ since
    \begin{equation}
        \left|
        \action^{m_j}(\mathbf{u}^j(0))-\action^{m_j}(\mathbf{u}^j(\tau_j))
        \right| = \int_0^{\tau_j} \la -\frac{\partial}{\partial t} ,
        X_{m_j}(\mathbf{u})\ra \ud s = \int_0^{\tau_j} |\nabla_\zeta
        f^{m_j}(\mathbf{u}^j)|^2\ud s.
    \end{equation}
   Therefore, $\tau_j\to +\infty$ and thus there exists
   a sequence $(s_j)_{j\geq 0}$ in
   $I\times V_r (x,m,t)\setminus I\times V_{r/2} (x,m,t)$ such that
   $|\nabla_\zeta f^{m_j}(s_j)|\to 0$.

   Let $(t^j;\lambda_j \mathbf{w}^j)\in I\times U_r(x,m_j,t)$
    be lifted from $s_j$ with 
    $\mathbf{w}^j\in\Sigma_{m_j}$ and $\lambda_j\in[1,2]$ such that
    $\lambda_j \mathbf{w}^j\in\partial (B_1^{m_j n_1+n_2})$
    (which exists by definition of $\Sigma_{m_j}$).
    Since $t^j = \action^{m_j}(s_j)$, one has $t^j\to t$.
    Since $|\nabla_\zeta f^{m_j}(s_j)|\to 0$, the norm of the orthogonal projection of
    $\nabla F_{t^j}(\mathbf{w}^j)\in\C^{N(m_j)+1}$ on the
    sphere $\Sigma_{m_j}$ goes to zero as $j\to\infty$.
    The radial component is
    $\la \mathbf{w}^j , \nabla F_{t^j}(\mathbf{w}^j)\ra = 2
    F_{t^j}(\mathbf{w}^j) = 0$, hence
    $|\nabla F^{m_j}_{t^j} (\mathbf{w}^j) |\to 0$.
    Since $\lambda_j\in[1/2,1]$, the homogeneity of $F^{m_j}_{t^j}$
    implies that
    \begin{equation*}
        \left|\nabla F^{m_j}_{t^j} (\lambda_j \mathbf{w}^j)
        \right|\xrightarrow{j\to\infty} 0.
    \end{equation*}
    We can thus apply Lemma~\ref{lem:projballcrossing}
    to the sequences $(m_j)$, $(\lambda_j \mathbf{w}^j)$
    and the fixed point $x\in\C^{d+1}$.
    We then find a sequence $(z_j)_{j\in\Z}$ in $\C^{d+1}$
    such that $\varphi^k([z_0])$ keeps close to $[x]$
    for all $k\in\Z$ with $[z_0]\neq [x]$.
\end{proof}

\bibliographystyle{amsplain}
\bibliography{biblio} 

\end{document}